\newtheorem{lem}{Lemma}[section]
\newtheorem{thm}[lem]{Theorem}
\newtheorem{prop}[lem]{Proposition}
\newtheorem{cor}[lem]{Corollary}
\newtheorem{rem}[lem]{Remark}
\newtheorem{ex}[lem]{Example}
\newtheorem{defnr}[lem]{Definition \& Remark}
\newtheorem{defn}[lem]{Definition}
\newtheorem{nota}[lem]{Notation}
\newtheorem*{thmn}{Theorem}
\newenvironment{proof}{\noindent\textit{Proof. }}{\hfill $\Box$\\[0.5cm]}
\newcounter{fig}
\newcommand{\new}{\medskip\noindent(\thefig)  \addtocounter{fig}{1}}
\newcommand{\startenum}{\setcounter{fig}{1}}
\DeclareMathSymbol{\ordinaryl}{\mathalpha}{letters}{`l}     % erzeugt automatisch ein geschwungenes
\newcommand{\bdefn}{\begin{defn}}
\newcommand{\edefn}{\end{defn}}
\newcommand{\bdefnr}{\begin{defnr}}
\newcommand{\edefnr}{\end{defnr}}
\newcommand{\benum}{\begin{enumerate}}
\newcommand{\eenum}{\end{enumerate}}
\newcommand{\bthm}{\begin{thm}}
\newcommand{\ethm}{\end{thm}}
\newcommand{\bnota}{\begin{nota}}
\newcommand{\enota}{\end{nota}}
\newcommand{\bproof}{\begin{proof}}
\newcommand{\eproof}{\end{proof}}
\newcommand{\bprop}{\begin{prop}}
\newcommand{\eprop}{\end{prop}}
\newcommand{\bcor}{\begin{cor}}
\newcommand{\ecor}{\end{cor}}
\newcommand{\blem}{\begin{lem}}
\newcommand{\elem}{\end{lem}}
\newcommand{\brem}{\begin{rem}}
\newcommand{\erem}{\end{rem}}
\newcommand{\bex}{\begin{ex}}
\newcommand{\eex}{\end{ex}}
\newcommand{\btab}{\begin{tabular}}
\newcommand{\etab}{\end{tabular}}
\newcommand{\bctab}{\begin{center}\begin{tabular}}
\newcommand{\ectab}{\end{tabular}\end{center}}
\newcommand{\ba}{\begin{array}}
\newcommand{\ea}{\end{array}}
\newcommand{\bea}{\begin{eqnarray}}
\newcommand{\eea}{\end{eqnarray}}
\newcommand{\bean}{\begin{eqnarray*}}
\newcommand{\eean}{\end{eqnarray*}}
\newcommand{\cs}{\check{S}}
\newcommand{\cbb}{\mathbb{C}}
\newcommand{\lbb}{\mathbb{L}}
\newcommand{\nbb}{\mathbb{N}}
\newcommand{\qbb}{\mathbb{Q}}
\newcommand{\rbb}{\mathbb{R}}
\newcommand{\acal}{\mathcal{A}}
\newcommand{\bcal}{\mathcal{B}}
\newcommand{\ccal}{\mathcal{C}}
\newcommand{\fcal}{\mathcal{F}}
\newcommand{\mcal}{\mathcal{M}}
\newcommand{\ocal}{\mathcal{O}}
\newcommand{\scal}{\mathcal{S}}
\newcommand{\ucal}{\mathcal{U}}
\newcommand{\ph}{pluri\-harmonic}
\newcommand{\sph}{sub\-pluri\-harmonic}
\newcommand{\psh}{pluri\-sub\-harmonic}
\newcommand{\qpsh}{$q$-pluri\-sub\-harmonic}
\newcommand{\sqpsh}{strictly $q$-pluri\-sub\-harmonic}
\newcommand{\qpshy}{$q$-pluri\-sub\-harmonicity}
\newcommand{\qhol}{$q$-holo\-morphic}
\newcommand{\psc}{pseudo\-convex}
\newcommand{\qpsc}{$q$-pseudo\-convex}
\newcommand{\PSH}{\mathcal{PSH}}
\newcommand{\USC}{\mathcal{USC}}
\newcommand{\nbh}{neighborhood}
\newcommand{\inti}{\mathrm{int}}
\newcommand{\cont}{continuous}
\newcommand{\usc}{upper semi-continuous}
\newcommand{\lsc}{lower semi-continuous}
\newcommand{\fct}{function}
\newcommand{\fcts}{functions}
\renewcommand{\and}{\ \mathrm{and}\ }
\newcommand{\qand}{\quad \mathrm{and} \quad}
\newcommand{\diam}{\mathrm{diam}}
\newcommand{\dd}{\partial}
\newcommand{\dbar}{\overline{\partial}}
\newcommand{\cld}{\overline{D}}
\newcommand{\ol}[1]{\overline{#1}}
\newcommand{\relc}{\Subset}
\newcommand{\rmax}[1]{\widetilde{\mathrm{max}}_{#1}}
\newcommand{\SA}{\check{S}_\acal}
\newcommand{\bA}{b_\acal}
\newcommand{\BA}{B_\acal}
\newcommand{\PA}{P_\acal}
\newcommand{\MA}{m_\acal}
\renewcommand{\Re}{\mathrm{Re}}
\newcommand{\nach}{\rightarrow}
\newcommand{\D}{\displaystyle}
\newcommand{\eps}{\varepsilon}
\newcommand{\vphi}{\varphi}
\newcommand{\vrho}{\varrho}
\newcommand{\cl}[1]{\ol{#1}}
\newcommand{\scl}[1]{\ol{#1}^\downarrow}
\newcommand{\kscl}[2]{\ol{#2}^{\downarrow #1}}
\author{T. Pawlaschyk \footnote{The author was partially supported by the German Research Foundation (DFG, Deutsche Forschungsgemeinschaft) under the project 'Pluripotential Theory, Hulls and Foliations', grant SH 456/1-1. The author would like to thank Prof. N.V.Shcherbina for his advisory during the writing process.}}
\title{The Bergman-Shilov boundary for subfamilies of $q$-plurisubharmonic functions}
\date{}
\begin{document}

\maketitle

\begin{abstract} We introduce a notion of the Bergman-Shilov (or Shilov) boundary for some subclasses of \usc\ \fcts\ on a compact Hausdorff space. It is by definition the smallest closed subset of the given space on which all \fcts\ of that subclass attain their maximum. For certain subclasses with simple structure one can show the existence and uniqueness of the Shilov boundary. Then we provide its relation to the set of peak points and establish Bishop-type theorems. As an application we obtain a generalization of Bychkov's theorem which gives a geometric characterization of the Shilov boundary for \qpsh\ \fcts\ on convex bounded domains. In the case of bounded \psc\ domains with smooth boundary we also show that some parts of the Shilov boundary for \qpsh\ \fcts\ are foliated by $q$-dimensional complex submanifolds.
\end{abstract}

%\addcontentsline{toc}{section}{Contents}
  \tableofcontents

\addcontentsline{toc}{section}{Introduction}

\section*{Introduction}

In his article \cite{By} from 1981, S.N.Bychkov gave a geometric characterization of the Shilov boundary for bounded convex domains in $\cbb^n$. The aim of our paper is to generalize his result to the Shilov boundary with respect to \qpsh\ and \qhol\ \fcts\ on bounded convex domains. These classes of \fcts\ were already studied by different authors, e.g., R.Basener in \cite{Ba}, R.L.Hunt and J.J.Murray in \cite{HM} or Z.S\l odkowski in \cite{Sl}, \cite{Sl2}. It was H.J.Bremermann in \cite{Br1} who observed that there is a characterization of a Bergman-Shilov boundary (or, for short, Shilov boundary) based on \psh\ \fcts\ without showing its existence. This gap was filled by, e.g., J.Siciak in \cite{Sic}. Given a compact Hausdorff space $K$ and a subclass $\acal$ of \usc\ \fcts\ on $K$, the Shilov boundary for $\acal$ is the smallest closed subset of $K$ on which all \fcts\ from $\acal$ attain their maximum. Existence and uniqueness for such a subset is guaranteed if $\acal$ has some simple structure, e.g., if $\acal$ forms a cone and sublevel sets of finitely many \fcts\ from $\acal$ generate the topology of $K$ (see Theorem 1' in \cite{Sic}). For \qpsh\ \fcts\ the condition on $\acal$ to be a cone is too strong, since \qpshy\ is not stable under addition. It turns out that the mentioned above condition can be relaxed so that the existence of the Shilov boundary for a wide class of \usc\ \fcts\ can be guaranteed. This will be the main part of the first chapter.

In the second chapter we define the closure of a subclass of \usc\ \fcts\ to be the collection of all limits of decreasing sequences of \fcts\ from that subclass. In our context it plays a role similar to the uniform closure of a subset of \cont\ \fcts\ on a compact Hausdorff space: The Shilov boundary for the subclass and its closure coincide.

The third chapter brings the Shilov boundary into connection to peak points. E.Bishop proved in \cite{Bi} that, if the compact Hausdorff space is assumed to be metrizable, then the closure of the set of peak points and the Shilov boundary for uniform subalgebras of \cont\ \fcts\ coincide. This is also true for any Banach subalgebra of \cont\ \fcts\ due to the results of H.G.Dales \cite{Dales} (see also \cite{Honary}). Note that using \usc\ \fcts\ similar identities were obtained in \cite{Sic} and \cite{Wittmann}. We apply these results to unions of uniform algebras and establish additional Bishop-type theorems.

In the fourth chapter we introduce the notions of \qpsh\ and \qhol\ \fcts\ and give a list of their properties. For the proofs and further results on these classes of \fcts\ we refer also to \cite{Sl}, \cite{Dieu} and \cite{TPESZ}.

In chapter five the results from the first three chapters are applied to subclasses of \qpsh\ \fcts.

In the sixth chapter Bychkov's theorem is generalized as follows: a boundary point of a convex bounded domain does not lie in the Shilov boundary for \qpsh\ or \qhol\ \fcts\ if and only if it is contained in an open part of a complex plane of dimension at least $q+1$ which is fully contained in the boundary of the given convex set.

It seems still to be an open question whether the Hausdorff dimension of Shilov boundary for holomorphic functions on compact sets in $\cbb^N$ is greater or equal to $N$. E.Bishop gave in \cite{Bi} a positive answer to this question in the special case $N=2$. In this context, we consider the Hausdorff dimension of the Shilov boundary for \qpsh\ \fcts\ on a convex bounded domain in chapter seven.

In chapter eight we show that the Shilov boundary for \qpsh\ and the Shilov boundary for $\ccal^2$-smooth \qpsh\ \fcts\ defined near a compact set coincide due to approximation techniques of Bungart \cite{Bu}, S\l odkowski \cite{Sl2} and Demailly \cite{Dem}. As an application we prove that if the given domain $D$ is bounded and smoothly bounded, then the Shilov boundary for \qpsh\ \fcts\ defined near $\cld$ is exactly the closure of the set of all strictly $q$-\psc\ points of the boundary of $D$. Using a rank condition on the Levi form of a defining \fct\ of $D$ which was established by M.Freeman in \cite{Free}, we obtain a foliation of parts of the Shilov boundary for \qpsh\ \fcts\ on $\cld$ by complex $q$-dimensional submanifolds.

\section{Shilov boundary for \usc\ \fcts}

In this chapter we will define the Bergman-Shilov boundary for subclasses of \usc\ \fcts\ and show its existence and uniqueness in certain cases. For the sake of abbreviation, we will simply talk about the Shilov boundary instead of the Bergman-Shilov boundary. Anyway, we have to point out that the concept of a distinguished boundary of certain domains in $\cbb^2$ was already introduced by S.Bergman in \cite{Berg}.

At first, we recall some basic definitions and facts about \usc\ \fcts\ on a compact Hausdorff space $K$.

\begin{defn} A \fct\ $f:K \to [-\infty,\infty)$ is called \textit{\usc}\ on $K$ if the sub-level set $\{ x \in K : f(x)<c\}$ is open in $K$ for every $c \in \rbb$. We denote then by $\USC(K)$ the set of all \usc\ \fcts\ on $K$ and by $\ccal(K)=\ccal(K,\cbb)$ the set of all complex-valued \cont\ \fcts\ on $K$.
\end{defn}

We will outline an important example for an \usc\ \fct.

\begin{ex} Let $S$ be a closed subset of $K$. Then the \textit{characteristic \fct\ $\chi_S$ of $S$ (in $K$)} given by
$$
\chi_S(x):=\left\{ \begin{array}{ll} 1, & x \in S \\ 0, & x \in K\setminus S \\ \end{array}\right.
$$
is \usc\ on $K$.
\end{ex}

The following statement is a well known fact.

\begin{lem}\label{lem-maxattain} Every \fct\ $f \in \USC(K)$ attains its maximum on $K$, i.e., there exists a point $x_0$ in $K$ such that
$$
\max\{ f(x) : x \in K\}:=f(x_0)=\sup\{ f(x) : x \in K\}.
$$
\end{lem}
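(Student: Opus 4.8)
The plan is to deduce the result directly from the compactness of $K$ together with the fact that, by definition, the sub-level sets $\{ x \in K : f(x) < c\}$ of an \usc\ \fct\ are open. Set $M := \sup\{ f(x) : x \in K\} \in [-\infty,+\infty]$. Two things need to be shown: that $M$ is finite (or else $-\infty$), and that $M$ is actually attained at some point.

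First I would rule out $M = +\infty$. Since $f$ takes values in $[-\infty,\infty)$, every point $x \in K$ satisfies $f(x) < n$ for some $n \in \nbb$, so the open sets $\{ x \in K : f(x) < n\}$, $n \in \nbb$, form an open cover of $K$. Compactness yields a finite subcover and hence a single $N \in \nbb$ with $f < N$ on all of $K$, so that $M \le N < +\infty$. If $M = -\infty$, then $f \equiv -\infty$ and the maximum is attained at any point of $K$; hence from now on I may assume $M \in \rbb$.

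For the main step I would argue by contradiction: suppose $f(x) < M$ for every $x \in K$. Then each such $x$ lies in $\{ f < c\}$ for some $c < M$ (choose $c$ with $f(x) < c < M$), so the open sub-level sets $\{ x \in K : f(x) < c\}$ with $c < M$ cover $K$. Compactness provides finitely many values $c_1,\dots,c_k < M$ whose sub-level sets already cover $K$, and putting $c^\star := \max\{ c_1,\dots,c_k\} < M$ gives $f(x) < c^\star$ for all $x \in K$. Thus $c^\star$ is an upper bound for $f$ strictly smaller than $M$, contradicting the definition of $M$ as the supremum. Hence some $x_0 \in K$ satisfies $f(x_0) = M$, which is the assertion.

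There is no genuine obstacle here; this is a routine compactness argument, and the only points requiring a little care are the bookkeeping with the extended value $-\infty$ and the finiteness of the supremum, both of which are handled by the same sub-level-set covering trick. One could instead run a single argument by covering $K$ with all $\{ f < c\}$, $c \in \rbb$, and extracting a finite subcover, but splitting off the finiteness of $M$ first keeps the contradiction step cleanest.
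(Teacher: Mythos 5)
Your proof is correct. The paper states this lemma as a well-known fact and supplies no proof at all, so there is nothing to compare against; your argument --- covering $K$ by the open sub-level sets $\{x \in K : f(x)<c\}$ and extracting finite subcovers, first to rule out $\sup_K f=+\infty$ and then to contradict the definition of the supremum --- is the standard compactness argument the author is implicitly invoking.
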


From now on, $\acal$ is always a subset of $\USC(K)$. Our main object of study is the Shilov boundary for $\acal$.

\begin{defn}\label{defn-shilov} For a given \fct \ $f\in \USC(K)$ we set
$$
S(f):=\{x \in K: f(x)=\max_K f\}.
$$
A subset $S$ of $K$ is called a \textit{boundary for $\acal$} or \textit{$\acal$-boundary} if $S \cap S(f) \neq \emptyset$ for every $f \in \acal$. We denote by $\bA$ the set of all closed boundaries for $\acal$. The set $\SA:= \bigcap_{S \in \bA}S$ is called the \textit{Shilov boundary for $\acal$}.
\end{defn}

We give first some simple examples.

\begin{ex}\label{ex-shilov} \startenum

\new Let $f_1 = \chi_{\{0,1\}}$ and $f_2 =\chi_{\{1,2\}}$ considered as \usc\ \fcts\ on the interval $K=[0,2]$. For $\acal=\{f_1,f_2\}$ we have that $\{0,2\}, \{ 1\} \in \bA$, $S(f_1) \cap S(f_2) = \{ 1\}$ and that $\SA$ is empty.

\new For $f_1=\chi_{\{ 0\}}$ and $f_2=\chi_{\{ 1\}}$ considered as \fcts\ on $K=[0,1]$ we take $\acal=\{f_1,f_2\}$ and observe that $\{ 0,1\} \in \bA$, $S(f_1) \cap S(f_2) = \emptyset$ and $\SA=\{ 0,1\}$.

\new Consider the \fcts\ $f_1=\chi_{\{ -1,1\}}$ and $f_2=\chi_{{\{0\}}}$ defined on $[-1,1]$ and set $\acal=\{f_1,f_2\}$. Then $\{ -1,0\}, \{0,1\} \in \bA$, so $\SA=\{ 0\}$. But $\SA$ can not be an $\acal$-boundary because the \fct\ $f_1$ attains its maximum outside of zero.
\end{ex}

We have the following properties of Shilov boundaries.

\begin{prop}\label{prop-ashilov} \

\begin{enumerate}

\item The set $\SA$ is closed and possibly empty, whereas $\bA$ is never empty.

\item $S(f)$ is a closed non-empty subset of $K$.

\item If the set $\D T:= \bigcap_{f \in \acal} S(f)$ consists of more than two elements, then $\SA$ is empty.

\item If the set $T$ from above consists of one single element $x_0 \in K$ and $\SA \neq \emptyset$, then $\SA=\{ x_0\}$.

\item The set $S:=\bigcup_{f \in \acal} S(f)$ is an $\acal$-boundary.

\item If $\acal_1\subset\acal_2\subset \USC(K)$, then we have the following inclusions,
$$
b_{\acal_2} \subset b_{\acal_1} \qand \check{S}_{\acal_1} \subset \check{S}_{\acal_2}.
$$

\item Let $\acal=\bigcup_{j\in J}\acal_j$, where $\acal_j$ are subsets of $\USC(K)$. If $\check{S}_{\acal_j}$ are $\acal_j$-boundaries, then $\check{S}_\acal$ is an $\acal$-boundary and
$$
\SA = \overline{\bigcup_{j\in J} \check{S}_{\acal_j}}.
$$
\end{enumerate}
\end{prop}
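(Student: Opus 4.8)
The plan is to write $T := \overline{\bigcup_{j\in J}\check{S}_{\acal_j}}$ and to prove the two inclusions $T \subset \SA$ and $\SA \subset T$ separately, obtaining the assertion that $\SA$ is an $\acal$-boundary as a by-product of the second one. The whole argument rests on combining the monotonicity property (6) with the minimality built into the definition of $\SA$ as $\bigcap_{S\in\bA}S$.

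The inclusion $T \subset \SA$ is immediate from monotonicity. Since $\acal_j \subset \acal$ for each $j\in J$, part (6) gives $\check{S}_{\acal_j} \subset \SA$, whence $\bigcup_{j\in J}\check{S}_{\acal_j} \subset \SA$; as $\SA$ is closed by part (1), passing to closures yields $T \subset \SA$.

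The crux is to show that $T$ is a closed $\acal$-boundary. It is closed by construction, so only the boundary property requires checking. Given $f \in \acal = \bigcup_{j\in J}\acal_j$, I would choose an index $j_0$ with $f \in \acal_{j_0}$. This is exactly where the hypothesis enters: since $\check{S}_{\acal_{j_0}}$ is assumed to be an $\acal_{j_0}$-boundary, we have $\check{S}_{\acal_{j_0}} \cap S(f) \neq \emptyset$, and because $\check{S}_{\acal_{j_0}} \subset T$ this forces $T \cap S(f) \neq \emptyset$. Hence $T$ is a closed $\acal$-boundary, i.e. $T \in \bA$, and by the definition of $\SA$ as the intersection of all closed $\acal$-boundaries we conclude $\SA \subset T$. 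Together with the first inclusion this gives $\SA = T$, and since $T$ was just shown to be an $\acal$-boundary, so is $\SA$.

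The point to watch, rather than a genuine technical obstacle, is that the hypothesis on the pieces is indispensable: the boundary property of $T$ (and thus of $\SA$) must be inherited from the $\acal_j$ and cannot be recovered from minimality alone. Indeed, Example 1.5(3) shows that a Shilov boundary need not itself be a boundary, so the delicate step is recognizing that membership of $T$ in $\bA$ is precisely what converts the minimality of $\SA$ into the inclusion $\SA \subset T$.
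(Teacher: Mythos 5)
Your argument for item (7) is correct and is essentially the paper's own proof: the inclusion $\overline{\bigcup_j \check{S}_{\acal_j}}\subset\SA$ from monotonicity (6) and closedness (1), then the verification that this closed set meets $S(f)$ for each $f\in\acal$ via the hypothesis on the $\acal_{j}$, hence lies in $\bA$ and contains $\SA$. Note only that your proposal covers item (7) alone; items (1)--(6) are routine (and you correctly use (1) and (6) as inputs), but a complete write-up of the proposition would still need to record them.
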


\begin{proof}

\startenum

\new The set $\SA$ is closed as intersection of closed sets. Example \ref{ex-shilov} (1) shows that $\SA$ might be empty. The set $\bA$ contains at least the ambient space $K$.

\new Since $f\in\USC(K)$, the set $\{ x \in K : f(x)<\max_K f\}$ is open in $K$, so the set $S(f)=K\setminus\{ x \in K : f(x)<\max_K f\}$ is a closed subset of $K$. It is non-empty due to Lemma \ref{lem-maxattain}.

\new Pick two distinct elements $x_0,x_1$ from $T$. By definition $\{ x_0\}$ and $\{ x_1\}$ are $\acal$-boundaries and, thus, $\SA \subset \{ x_0\} \cap \{ x_1\} = \emptyset$.

\new In this case $\{ x_0 \} \in \bA$. Thus, $\emptyset \neq \SA \subset \{ x_0\}$ which yields $\SA=\{ x_0\}$.

\new The set $S$ is an $\acal$-boundary because $S \cap S(f)=S(f) \neq \emptyset$ for every $f \in \acal$.

\new This fact follows directly from definition.

\new The previous points (1) and (7) imply the inclusion $S:=\cl{\bigcup_{j\in J} \check{S}_{\acal_j}} \subset \SA$. By assumption, the set $S$ and, therefore, the set $\SA$ are non-empty.

Since an arbitrary \fct\ $f \in \acal$ is contained in $\acal_j$ for some $j \in J$ and by the assumption that $\check{S}_{\acal_j}$ is an $\acal_j$-boundary, we obtain that
$$
\emptyset \neq S(f) \cap \check{S}_{\acal_j} \subset  S(f) \cap S \subset  S(f) \cap \check{S}_{\acal}.
$$
This means that $S$ is an $\acal$-boundary and, thus, $\SA \subset S$. By the previous discussions above, we have that $S=\SA$ is an $\acal$-boundary.
\end{proof}

We can easily bring our concept of the Shilov boundary into relation with the classical Shilov boundary for uniform subalgebras of $\ccal(K)$.

\begin{rem}\label{rem-bshilov}

Let $\bcal$ be a subset of $\ccal(K)$. The classical Shilov boundary for $\bcal$ is the smallest closed subset $S$ of $K$ fulfilling $\max_S|f|=\max_K|f|$ for every $f \in \bcal$. Clearly, it corresponds to the Shilov boundary for the class $\log|\bcal|:=\{ \log|f|:f\in\bcal\}$. It then makes sense to simply write $b_\bcal$ and $\check{S}_\bcal$ instead of $b_{\log|\bcal|}$ and $\check{S}_{\log|\bcal|}$. It is clear that for the uniform closure $\cl{\bcal}$ of $\bcal$ in $\ccal(K)$ we have that $\check{S}_\bcal=\check{S}_{\cl{\bcal}}$.
\end{rem}

Now we recall the classical result of Shilov.

\begin{thmn}[Shilov] Let $K$ be a compact Hausdorff space and $\bcal$ a Banach subalgebra of $\ccal(K)$. Then $\check{S}_\bcal$ is non-empty and, moreover, it is a boundary for $\log|\bcal|$.
\end{thmn}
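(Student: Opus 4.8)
The plan is to produce a minimal element of $b_\bcal$ and then identify it with $\check{S}_\bcal$. Recall from Definition \ref{defn-shilov} and Remark \ref{rem-bshilov} that $S\in b_\bcal$ means $S$ is closed with $S\cap S(\log|f|)\neq\emptyset$, equivalently $\max_S|f|=\max_K|f|$, for every $f\in\bcal$. I order $b_\bcal$ by inclusion; it is non-empty since $K\in b_\bcal$. The key point is that every chain $\{S_\lambda\}$ in $b_\bcal$ has a lower bound, namely $\bigcap_\lambda S_\lambda$: for a fixed $f\in\bcal$ the sets $S_\lambda\cap S(\log|f|)$ are closed, totally ordered, and non-empty (each $S_\lambda$ is a boundary, and $S(\log|f|)\neq\emptyset$ by Lemma \ref{lem-maxattain} and Proposition \ref{prop-ashilov}(2)), so by compactness of $K$ their intersection is non-empty; hence $\bigcap_\lambda S_\lambda$ meets every $S(\log|f|)$ and is again a closed boundary. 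By Zorn's lemma (applied to the reverse inclusion) $b_\bcal$ therefore has a minimal element $\Gamma$, which is non-empty.

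It then suffices to show that $\Gamma\subseteq S$ for every $S\in b_\bcal$, for then $\Gamma\subseteq\bigcap_{S\in b_\bcal}S=\check{S}_\bcal\subseteq\Gamma$, so $\check{S}_\bcal=\Gamma$ is a non-empty boundary, which is exactly the two assertions. To get this inclusion I would introduce \emph{weak peak points}: call $p\in K$ such a point if for every open neighborhood $U$ of $p$ there is $f\in\bcal$ with $\max_{K\setminus U}|f|<\max_K|f|$. The easy half is that every weak peak point lies in every closed boundary $S$: given such an $f$ for a neighborhood $U$, the boundary property yields $\max_S|f|=\max_K|f|>\max_{K\setminus U}|f|$, so $S\cap U\neq\emptyset$; letting $U$ shrink and using that $S$ is closed gives $p\in S$. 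Thus the theorem reduces to the localization lemma: every point of the minimal boundary $\Gamma$ is a weak peak point.

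This localization lemma is the main obstacle. Fix $p\in\Gamma$ and a neighborhood $U$. Since $\Gamma$ is minimal, the proper closed subset $\Gamma\setminus U$ is not a boundary, so there is $g\in\bcal$ with $\max_{\Gamma\setminus U}|g|<\max_\Gamma|g|=\max_K|g|$, the last equality holding because $\Gamma$ is a boundary; normalize $\max_K|g|=1$ and put $\rho:=\max_{\Gamma\setminus U}|g|<1$. The difficulty is that minimality only controls $g$ on $\Gamma$: a priori $|g|$ could be close to $1$ somewhere in $(K\setminus U)\setminus\Gamma$, whereas a weak peak function must be strictly below its maximum on all of $K\setminus U$. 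Bridging this gap is the classical core of Shilov's lemma, and it is exactly where closure of $\bcal$ under multiplication becomes indispensable: although the compact set $\{|g|\ge\sigma\}$ (for $\rho<\sigma<1$) meets $\Gamma$ only inside $U$, one must still suppress $|g|$ on the compact piece $\{|g|\ge\sigma\}\setminus U$, and this is achieved by multiplying together high powers of $g$ with further functions from $\bcal$ so as to manufacture a single $f\in\bcal$ whose modulus stays near its maximum near $p$ but drops below it throughout $K\setminus U$.

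The cleanest way to organize this delicate step is via \emph{negligible} open sets, those $V$ for which $K\setminus V$ is again a boundary. One checks, exactly as in the Zorn argument above, that $p$ fails to lie in some closed boundary precisely when $p$ has a negligible neighborhood, so that the complement of $\check{S}_\bcal$ is the union of all negligible open sets; by compactness of the peak sets $S(\log|f|)$ the whole theorem then amounts to the statement that a finite union of negligible open sets is again negligible. Proving this finite-union statement is the genuinely non-trivial point, and it is the one I expect to require the full multiplicative structure of $\bcal$ together with the point-separating hypothesis implicit in the Banach-algebra setting. (Alternatively, since $\bcal$ is an algebra the class $\log|\bcal|$ is closed under addition, so one may instead invoke the relaxed cone-type existence criterion established in this chapter directly for $\log|\bcal|$, obtaining the result without reconstructing Shilov's lemma by hand.)
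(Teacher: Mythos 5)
First, note that the paper does not actually prove this named theorem: it is recalled as a classical result, and the closest the paper comes to a proof is the pair Lemma \ref{lem-shilov-bound} and Theorem \ref{thm-shilov-bound}, which abstract the standard Banach-algebra argument (the paper cites \cite{Wermer}, Theorem 9.1) into the additive, upper semi-continuous setting. Measured against that, your proposal has a genuine gap at exactly the point you yourself flag. The Zorn's lemma construction of a minimal closed boundary $\Gamma$ is correct (the chain/finite-intersection-property argument is fine), and the reduction ``$\Gamma\subset S$ for every $S\in b_\bcal$ follows once every point of $\Gamma$ is a weak peak point'' is also correct. But the localization lemma --- equivalently, your statement that a finite union of negligible open sets is negligible --- is the entire mathematical content of Shilov's theorem, and you do not prove it; you only describe the shape of the argument (``multiplying together high powers of $g$ with further functions from $\bcal$''). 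Everything before that step is soft topology and would work for an arbitrary family of functions, for which the theorem is false (Example \ref{ex-shilov}(1) of the paper gives two upper semi-continuous functions whose Shilov boundary is empty). So the proposal as written establishes nothing beyond the existence of \emph{some} minimal closed boundary; the identification of that minimal boundary with $\check{S}_\bcal$ is left unproved.

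The missing step is precisely what the paper's Lemma \ref{lem-shilov-bound} supplies, in logarithmic form: if $V=\{|f_1|<1,\dots,|f_n|<1\}$ is disjoint from some boundary $S$ and $E$ is another boundary, then $E\setminus V$ is a boundary, proved by considering $g^m f_j$ (additively, $m\log|g|+\log|f_j|$) for large $m$ and deriving a contradiction from the location of the maximum points of $|g|$. Your concluding parenthetical --- apply Theorem \ref{thm-shilov-bound} to $\acal=\acal_0=\log|\bcal|$, using $\log|fg|=\log|f|+\log|g|$ and $\log|e^b f^n|=n\log|f|+b$ to verify the scalar-cone and $\acal+\acal_0\subset\acal$ hypotheses --- is in fact the derivation most consonant with the paper, and it is the honest way to close the gap. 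Be aware, though, that this route still requires $\log|\bcal|$ to generate the topology of $K$, which forces $\bcal$ to separate points and contain constants; the paper's loose statement ``Banach subalgebra of $\ccal(K)$'' suppresses this hypothesis (without it the theorem as stated on $K$, rather than on the maximal ideal space, can fail), so if you take that route you should make the separation assumption explicit and supply the short compactness argument that produces a polyhedron $\{|f_1|<c_1,\dots,|f_n|<c_n\}$ inside a prescribed neighborhood of a prescribed point.
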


In this theorem the Banach algebra structure of $\bcal$ is heavily involved. We will extract the essential properties from that structure in order to establish similar results for Shilov boundaries for subclasses of \usc\ \fcts.

\begin{defn} Let $\acal$ be a subset of $\USC(K)$.
\begin{enumerate}

\item If $\acal_1$ and $\acal_2$ are two subfamilies of $\USC(K)$, then
$$
\acal_1+\acal_2:=\{ f+g:f\in\acal_1, g\in\acal_2\}.
$$

\item The family $\acal$ is a \textit{scalar cone} if $nf + b$ lies in $\acal$ for every $n \in \nbb_0=\nbb\cup\{0\}$, $f \in \acal$ and $b \in \rbb$. Here we use the convention $-\infty \cdot 0 = 0$.

\item The set $\acal$ is a \textit{cone} if $af+bg$ is contained in $\acal$ for every $a,b \in [0,+\infty)$ and $f,g \in \acal$.

\item An open set $V$ in $K$ is an \textit{$\acal$-polyhedron} if there exist finitely many \fcts\ $f_1,\ldots,f_n$ in $\acal$ and real numbers $C_1,\ldots,C_n$ such that
$$
V=V(f_1,\ldots,f_n)=\{x\in K : f_1(x)<C_1,\ldots, f_n(x)<C_n\}.
$$

\item The set $\acal$ \textit{generates the topology of $K$} if for every point $x\in K$ and every
\nbh\ $U$ of $x$ in $K$ there is an $\acal$-polyhedron $V$ such that $x\in V\subset U$.
\end{enumerate}
\end{defn}

Now we are able to show that the Shilov boundary for $\acal$ is a non-empty boundary for $\acal$ if $\acal$ possesses some simple structure. The following two statements are based on standard arguments used in the case of Banach subalgebras of \cont\ \fcts\ (see e.g. \cite{Wermer}, Theorem 9.1). First, we need the following lemma.

\begin{lem}\label{lem-shilov-bound} Let $\acal$ be a scalar cone. Assume that there exist an $\acal$-boundary $S \in \bA$ and an $\acal$-polyhedron $V{=}V(f_1,\ldots,f_n)$ such that $S \cap V=\emptyset$ and $\acal+\{f_j\}\subset\acal$ for $j=1,\ldots,n$. Given another $\acal$-boundary $E \in \bA$, it follows that $E\setminus V \in \bA$.
\end{lem}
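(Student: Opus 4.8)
The plan is to argue by contradiction and to manufacture, out of a hypothetical witness function, a new function in $\acal$ whose maximum over all of $K$ strictly exceeds its maximum over $E$; this is impossible since $E$ is an $\acal$-boundary. First I would record that $E \setminus V = E \cap (K \setminus V)$ is closed, being the intersection of the closed set $E$ with the complement of the open polyhedron $V$, so it suffices to show that $E \setminus V$ is an $\acal$-boundary. Assume it is not. Then there is $f \in \acal$ with $(E \setminus V) \cap S(f) = \emptyset$; writing $M := \max_K f$, this means $f < M$ on $E \setminus V$. We may take this witness to have $M > -\infty$, since if $f \equiv -\infty$ then $S(f) = K$ and $f$ witnesses nothing (when $E \setminus V \neq \emptyset$). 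As $E \in \bA$ is a boundary, $f$ still attains $M$ on $E$, necessarily at a point of $E \cap V$. Set $\delta := \max_{E \setminus V} f$, interpreted as $-\infty$ when $E \subseteq V$; then $\delta < M$.

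Then I would use the auxiliary boundary $S$ only to select which defining function of $V$ to exploit. Since $S$ is an $\acal$-boundary, $S \cap S(f) \neq \emptyset$; choose $q$ in this intersection, so that $f(q) = M$. Because $S \cap V = \emptyset$, the point $q$ lies outside $V$, and hence $f_{j_0}(q) \ge C_{j_0}$ for at least one index $j_0 \in \{1,\ldots,n\}$. The decisive point is that on the whole of $V$ every defining function stays below its threshold, so in particular $f_{j_0} < C_{j_0}$ throughout $V$. Now put
$$F_N := N f + f_{j_0},$$
which lies in $\acal$ for every $N \in \nbb$: indeed $Nf \in \acal$ since $\acal$ is a scalar cone, and adding $f_{j_0}$ keeps us inside $\acal$ by the hypothesis $\acal + \{f_{j_0}\} \subseteq \acal$.

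Finally I would compare the maxima of $F_N$ over $K$ and over $E$ for large $N$. On one side, $\max_K F_N \ge F_N(q) = NM + f_{j_0}(q) \ge NM + C_{j_0}$. On the other side, let $p \in E$ realise $\max_E F_N$, which exists because $F_N$ is \usc\ on the compact set $E$. If $p \in V$, then $f_{j_0}(p) < C_{j_0}$ and $f(p) \le M$, so $F_N(p) \le NM + f_{j_0}(p) < NM + C_{j_0}$; if instead $p \in E \setminus V$, then $f(p) \le \delta$, whence $F_N(p) \le N\delta + \max_E f_{j_0}$, which is $< NM + C_{j_0}$ as soon as $N(M-\delta) > \max_E f_{j_0} - C_{j_0}$, and this is achievable since $M - \delta > 0$ and $\max_E f_{j_0} < +\infty$ by upper semicontinuity on $E$. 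For such $N$ we get $\max_E F_N < NM + C_{j_0} \le \max_K F_N$, that is $E \cap S(F_N) = \emptyset$, contradicting that $E$ is an $\acal$-boundary.

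I expect the genuine difficulty to be two-fold, and it is conceptual rather than computational. First, one must realise that the contradiction has to be produced against $E$ and not against $S$: adding $f_{j_0}$ rewards points outside $V$, so it pushes the maximum of $F_N$ away from $V$, and the sole role of $S$ is to furnish a point $q$ outside $V$ at which $f$ is still maximal while some $f_{j_0}$ meets its threshold. Second, the strict inequality on $E \cap V$ must be secured by evaluating at the actual maximiser $p$, where $f_{j_0}(p) < C_{j_0}$ holds pointwise, rather than over the (non-compact) set $E \cap V$, where the supremum of $f_{j_0}$ could a priori reach $C_{j_0}$; choosing $j_0$ from a single point $q$ rather than trying to treat all $n$ faces of $V$ simultaneously is exactly what makes this clean.
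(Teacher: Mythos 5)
Your proof is correct and takes essentially the same route as the paper's: both perturb the hypothetical witness $f$ by a defining function of $V$ with a large coefficient on $f$ (your $F_N = Nf + f_{j_0}$, the paper's $g_j = mf + f_j$), use the boundary $S$ to locate a maximizer of $f$ outside $V$, and contradict the fact that $E$ is an $\acal$-boundary. The differences are only organizational: you fix a single index $j_0$ via the point $q \in S \cap S(f)$ and surface the contradiction directly as $\max_E F_N < \max_K F_N$, whereas the paper normalizes the thresholds and $\max_K f$ to $0$, forms $g_j$ for every $j$, proves $E\setminus V \neq \emptyset$ as a separate preliminary step, and lands the contradiction on $y \in V \cap S = \emptyset$.
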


\begin{proof} Since $\acal$ is a scalar cone and $\acal+\{f_j\}\subset\acal$ for $j=1,\ldots,n$, the constant \fct\ $0$ and, thus, $f_1,\ldots,f_n$ lie in $\acal$. Hence, we can assume that $V$ is of the form $V=\{x\in K : f_1(x){<}0,\ldots, f_n(x){<}0\}$.

Notice first that $E\setminus V$ is non-empty. Otherwise, $E \subset V$, so $\max_E f_j <0$ for every $j=1,\ldots,n$. Since $S$ does not meet $V$, there has to be an index $j_0\in \{1,\ldots,n \}$ such that $\max_S f_{j_0} \geq 0$. We obtain the contradiction $0 \leq \max_S f_{j_0}=\max_E f_{j_0} < 0$.

Suppose that the statement of the lemma is false, i.e., there are a point $y \in K$ and a \fct \ $f \in \acal$ such that $\max _{E\setminus V} f < \max_K f=f(y)$. Since $\acal$ is a scalar cone and $S \in \bA$, we can assume that $f(y)=0$ and $y \in S$. Consider for $m \in \nbb$ the functions $g_j:=mf + f_j \in \acal$, $j{=}1,{\ldots},n$. If $m$ is large enough, then $\max_{E\setminus V} g_j<0$ for each $j{=}1,\ldots,n$. Since $\max_K f =0$, it follows from the definition of $V$ that for every $j{=}1,\ldots,n$ we have that $g_j(x)<0$ for every $x \in V$. Hence, $\max_K g_j =\max_E g_j <0$ for every $j{=}1,\ldots,n$.

We conclude that $y \in V$. If not, there is an index $j_1 \in \{1,\ldots,n \}$ with $f_{j_1}(y)\geq 0$ and, thus, $g_{j_1}(y)\geq 0$, which is impossible. Thus, $y \in V\cap S = \emptyset$, a contradiction.
\end{proof}

%Proposition \ref{prop-shilov-bound}

\begin{thm}\label{thm-shilov-bound} If $\acal$ contains a subset $\acal_0$ which generates the topology of $K$ such that $\acal+\acal_0\subset\acal$, then the Shilov $\acal$-boundary is an $\acal$-boundary; i.e., $\SA \in \bA$.
\end{thm}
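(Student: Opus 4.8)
The plan is to produce, by means of Zorn's lemma, a \emph{minimal} closed $\acal$-boundary and then to identify it with $\SA$ using Lemma~\ref{lem-shilov-bound}; throughout I use the scalar-cone hypothesis on $\acal$, which is exactly what is needed to invoke that lemma. (The degenerate case $\acal=\emptyset$ is trivial, as then $\emptyset\in\bA$, so I may assume $\acal\neq\emptyset$.)

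First I would partially order $\bA$ by inclusion and look for a minimal element. To apply Zorn's lemma I must check that every chain $\{S_\alpha\}_{\alpha\in I}\subset\bA$ admits a lower bound, and the natural candidate is the closed set $S_\infty:=\bigcap_\alpha S_\alpha$. The point to verify --- and this is the main obstacle --- is that $S_\infty$ is again an $\acal$-boundary. Fix $f\in\acal$. By Proposition~\ref{prop-ashilov}(2) the set $S(f)$ is closed and non-empty, so each $S_\alpha\cap S(f)$ is a non-empty closed subset of $K$; since $\{S_\alpha\}$ is totally ordered by inclusion, these sets are nested and therefore have the finite intersection property. Compactness of $K$ then forces $\bigcap_\alpha\bigl(S_\alpha\cap S(f)\bigr)=S_\infty\cap S(f)\neq\emptyset$. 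As $f$ was arbitrary, $S_\infty\in\bA$, and Zorn's lemma yields a minimal closed $\acal$-boundary $\Gamma$.

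It then remains to prove $\Gamma=\SA$. Since $\Gamma\in\bA$, the definition of the Shilov boundary gives $\SA\subset\Gamma$, so I only need the reverse inclusion, which I obtain by showing $\Gamma\subset E$ for every $E\in\bA$. Suppose this fails: there are a closed boundary $E$ and a point $x_0\in\Gamma\setminus E$. As $K\setminus E$ is an open \nbh\ of $x_0$ and $\acal_0$ generates the topology of $K$, I can pick an $\acal_0$-polyhedron $V=V(f_1,\ldots,f_n)$ with $f_1,\ldots,f_n\in\acal_0\subset\acal$ and $x_0\in V\subset K\setminus E$; in particular $E\cap V=\emptyset$. Since $\acal+\acal_0\subset\acal$, we have $\acal+\{f_j\}\subset\acal$ for each $j$, so all hypotheses of Lemma~\ref{lem-shilov-bound} hold with $E$ playing the role of $S$ and $V$ the given polyhedron.

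Applying Lemma~\ref{lem-shilov-bound} to the boundary $\Gamma$ then yields $\Gamma\setminus V\in\bA$, and this set is closed because $V$ is open. But $x_0\in\Gamma\cap V$, so $\Gamma\setminus V$ is a closed $\acal$-boundary strictly contained in $\Gamma$, contradicting the minimality of $\Gamma$. Hence $\Gamma\subset\bigcap_{E\in\bA}E=\SA$, which together with $\SA\subset\Gamma$ gives $\Gamma=\SA$; in particular $\SA=\Gamma\in\bA$, as claimed. The only genuinely delicate point is the chain argument of the second paragraph, where the compactness of $K$ is indispensable.
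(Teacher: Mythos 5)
Your argument has one genuine gap: you announce that ``throughout I use the scalar-cone hypothesis on $\acal$'', but the theorem does not assume $\acal$ is a scalar cone --- its only hypotheses are that $\acal_0\subset\acal$ generates the topology and $\acal+\acal_0\subset\acal$. Since Lemma~\ref{lem-shilov-bound} genuinely requires the scalar-cone property (its proof rescales $f$ to $mf+f_j$ and normalizes $f(y)=0$), your proof as written only establishes the theorem for scalar cones. The paper closes exactly this gap at the end of its proof by passing to $\tilde{\acal}:=\{nf+c : n\in\nbb_0,\ f\in\acal,\ c\in\rbb\}$ and $\tilde{\acal}_0$ defined analogously, observing that $f$ and $nf+c$ peak on the same sets so that $b_\acal=b_{\tilde{\acal}}$ and $\cs_\acal=\cs_{\tilde{\acal}}$, and that $\tilde{\acal}$ is a scalar cone satisfying the remaining hypotheses. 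You need this (or an equivalent) reduction before invoking Lemma~\ref{lem-shilov-bound}; with it inserted, your proof is complete.

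Modulo that reduction, your route is correct and genuinely different from the paper's. The paper never invokes Zorn's lemma: assuming $\SA\notin\bA$ it takes a function $f$ with $\max_{\SA}f<\max_K f$, uses upper semicontinuity to find a neighborhood $U$ of $\SA$ on which $f$ stays below its maximum, covers the compact set $K\setminus U$ by finitely many polyhedra each missing some boundary, and iterates Lemma~\ref{lem-shilov-bound} to manufacture a single boundary $E\subset U$, contradicting $E\in\bA$; it then needs a separate sub-argument for the case $\SA=\emptyset$. Your construction of a minimal closed boundary $\Gamma$ via chains and the finite intersection property, followed by a single application of Lemma~\ref{lem-shilov-bound} to show $\Gamma$ sits inside every closed boundary, is correct (the nested-sets compactness argument for $S_\infty\cap S(f)\neq\emptyset$ is exactly right, and $\Gamma\setminus V$ is closed since $V$ is open), and it handles the empty and non-empty cases of $\SA$ uniformly. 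The trade-off is that your approach leans on the axiom of choice where the paper's is a direct compactness argument, but it buys a cleaner case analysis.
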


\begin{proof} At first, assume that $\acal$ is a scalar cone. If $\SA=K$, then there is nothing to show. So we can assume that $\SA\neq K$. We first treat the case $\SA \neq \emptyset$. Suppose $\SA \notin \bA$, then there is a \fct\ $f \in \acal$ such that $\max_{\SA} f < \max_K f$. Since $f$ is \usc\ on $K$, there is a \nbh\ $U$ of $\SA$ such that $f(x) < \max_K f$ for every $x \in U$. Then, since $\acal_0$ generates the topology of $K$, we conclude that for every $y \in L:=K\setminus U$ there are an $\acal_0$-polyhedron $V_y$ and an $\acal$-boundary $S_y \in \bA$ such that $y\in V_y$ and $V_y \cap S_y = \emptyset$. The family $\{ V_y\}_{y \in L}$ covers $L$. Hence, by the compactness of $L$, there are finitely many points $y_1,\ldots,y_l \in L$ such that the subfamily $\{ V_{y_j}\}_{j=1,\ldots,l}$ covers $L$. Since $\acal+\acal_0\subset\acal$, we can apply iteratively the previous Lemma \ref{lem-shilov-bound} in order to obtain that
$$
E:= \left(\left(\left( K\setminus V_{y_1}\right) \setminus V_{y_2}\right)\setminus \ldots \setminus V_{y_l}\right)= K\setminus \bigcup_{j=1}^l V_{y_j} \in \bA.
$$
Notice that, by the construction, the set $\SA$ lies in $E$ and, hence, $E$ is non-empty. Moreover, $E \subset U$ and, thus, $\max_E f < \max_K f$. But this contradicts to the fact that $E \in \bA$. Hence, $\SA \in \bA$.

In the case $\SA = \emptyset$, we pick an arbitrary point $p \in K$ and a \nbh\ $U$ of $p$ in $K$ which is an $\acal_0$-polyhedron of the form $U=\{ x \in K : f_1(x)<0,\ldots,f_k(x)<0\}$ such that $U \neq K$. Observe that for every $y \in K\setminus U$ there exists an $\acal$-boundary $S_y$ with $y \notin S_y$, since otherwise $y \in \SA$. Then we can choose an $\acal_0$-polyhedron $V_y$ such that $y \in V_y$, $p \notin V_y$ and $S_y \cap V_y$ is empty. By the same argument as above we can construct an $\acal$-boundary $E$ such that $p \in E \subset U$. But since $U \neq K$, there exists a point $x_0 \in K\setminus U$ and an index $k_0 \in \{ 1,\ldots,k\}$ such that $f_{k_0}(x_0)\geq 0$. This leads to the contradiction
$$
0 \leq f_{k_0}(x_0) \leq \max_K f_{k_0} = \max_E f_{k_0} < 0.
$$
Thus, $\SA$ can not be empty.

If $\acal$ is not necessarily a scalar cone, consider the set
$$
\tilde{\acal}:=\{ nf+c : n \in \nbb_0, \ f \in \acal, \ c \in \rbb\}.
$$
Since $\acal$ lies in $\tilde{\acal}$, we have that $b_{\tilde{\acal}} \subset b_{\acal}$ and $\cs_{\acal} \subset \cs_{\tilde{\acal}}$. Pick an arbitrary $\acal$-boundary $S$ and a \fct\ $nf+c \in \tilde{\acal}$, where $f \in \acal$, $n \in \nbb$ and $c \in \rbb$. Since $f$ and $nf+c$ attain their maximum at the same points, we have that
$$
S \cap S(nf+c)=S \cap S(f) \neq \emptyset.
$$
But this means that $S$ is also an $\tilde{\acal}$-boundary, so $b_\acal = b_{\tilde \acal}$ and $\cs_{\acal} =\cs_{\tilde{\acal}}$.

Now observe that the family $\tilde{\acal}_0:=\{ nf+c : n \in \nbb_0, \ f \in \acal_0, \ c \in \rbb\}$ generates the topology of $K$, since it contains $\acal_0$. Moreover, we have that $\tilde\acal+\tilde\acal_0\subset\tilde\acal$ and that $\tilde{\acal}$ is a scalar cone. Thus, by the previous discussions, we conclude that $\cs_{\acal}=\cs_{\tilde \acal} \in b_{\tilde \acal}=b_{\acal}$. This finishes the proof.
\end{proof} 

\section{Closure of a subfamily of \usc\ \fcts}

As in the previous section, $K$ will always be a compact Hausdorff space and $\acal$ a subfamily of \usc\ \fcts\ on $K$.

The limit of a decreasing sequence of \usc\ \fcts\ is again \usc. This simple fact will allow us to introduce the notion of the closure of $\acal$ and, hence, will give the possibility to compare the initial class with an approximating subclass.

\begin{defn}\label{def-cl-cones} The \textit{closure} $\scl{\acal}$ of $\acal$ is the set of pointwise limits of all decreasing sequences of \fcts\ in $\acal$. The set $\acal$ is \textit{closed} if $\scl{\acal}=\acal$.
\end{defn}

\begin{rem}\label{rem-cones} \startenum

\new If $\acal$ is a (scalar) cone, then its closure $\scl{\acal}$ is also a (scalar) cone.

\new The family $\acal$ generates the topology of $K$ if and only if $\scl{\acal}$ generates it. Indeed, one inclusion is trivial, since $\acal$ is always contained in $\scl{\acal}$. So let $\scl{\acal}$ generate the topology of $K$. Given a point $p \in K$ and an open \nbh\ $U$ of $p$ in $K$ there exists an $\scl{\acal}$-polyhedron $V=\{x \in K : f_1(x)<c_1,\ldots,f_k(x)<c_k\}$ such that $p \in V \subset U$. Since $f_1,\ldots f_k \in \scl{\acal}$, for every $j=1,\ldots,k$ there exists a sequence $(f_{j,n_j})_{n_j \in \nbb}$ of \fcts\ $f_{j,n_j} \in \acal$ which decreases to $f_j$ as $n_j$ tends to $\infty$. For large enough $n_0$ we have that $f_j(p)\leq f_{j,n_0}(p) < c_j$ for every $j=1,\ldots,k$. Then
$$
p \in V_0=\{ x \in K : f_{1,n_0}(x) < c_1,\ldots,f_{k,n_0}(x) < c_k\} \subset V \subset U
$$
and $V_0$ is an $\acal$-polyhedron. Thus, $\acal$ generates the topology of $K$.

\end{rem}

The notion of \textit{closure} introduced above has not the same meaning as the notion 'closure' in the topological sense, since in general it will not lead to a closed subclass of \usc\ \fcts. It becomes then an interesting question whether there is a better definition of the closure of $\acal$ which yields a closed set in our sense. Nevertheless, we will see later on that the notion introduced above is sufficient for our purposes.

\begin{ex} \startenum

\new Consider the following \usc\ \fcts\ on the compactification $K=[0,+\infty]$ of the interval $[0,+\infty)$. For an integer $n \in \nbb$ we set
$$
f_n:=\chi_{[1-\frac{1}{n+1},1]} \qand g_{n,k}:=1/k\cdot \chi_{\{ 1-\frac{1}{n+1}\}} + f_n.
$$
The \fcts\ $f_n$ decrease to $f_0:=\chi_{\{ 1\}}$. Now if $\acal$ is the set $\{ g_{k,n}: k,n \in \nbb\}$, then $\scl{\acal} = \acal \cup \{ f_n : n \in \nbb\}$ and $\scl{\left(\scl{\acal}\right)} = \scl{\acal} \cup \{ f_0\}$, but it is easy to see that $f_0$ can not be the limit of a decreasing sequence of \fcts\ from $\acal$.

\new One can think that after closing $\acal$ finitely many times we obtain a closed set. But this turns out to be wrong. Define for $k \in \nbb$ iteratively the $k$-th closure $\kscl{k}{\acal}$ of $\acal$ by $\scl{\left(\kscl{(k-1)}{\acal}\right)}$. Given $k\in \nbb$ and $n_0,\ldots,n_k \in \nbb$ consider the following \usc\ \fct\
$$
h_{n_0,\ldots,n_k}(x):=\sum_{j=0}^{k-1} g_{n_j n_{j+1}}(x-j),
$$
where $x \in [0,+\infty]$ and $g_{n_j n_{j+1}}$ are the \fcts\ from the previous example. We set $\acal:=\{ h_{n_0,\ldots,n_k}: k \in \nbb, n_0,\ldots, n_k \in \nbb\}$. Then we conclude that $\kscl{(k+1)}{\acal}$ contains the \fct\ $\chi_{\{1,\ldots,k \}}$, but not $\chi_{\{1,\ldots,k+1 \}}$.

\new Even if we take the union of all $l$-th closures it will not lead to a closed set. Consider now for given integers $k \in \nbb$ and $n_0,\ldots,n_k \in \nbb$ the \fcts\
$$
G_k:=\chi_{\{\infty\}}+ \sum_{j=k+1}^\infty(1+1/j)\chi_{\{ j\}} \qand H_{n_0,\ldots,n_k} := h_{n_0,\ldots,n_k} + G_k,
$$
where $h_{n_0,\ldots,n_k}$ are the \fcts\ from the example above. Now consider the family $\acal:=\{ H_{n_0,\ldots,n_k}: k \in \nbb, n_0,\ldots, n_k \in \nbb\}$. Then by the same argument as before we can derive that $\bigcup_{l\in \nbb} \kscl{l}{\acal}$ contains $\chi_{\{1,\ldots,k\}} +g_k$ for every $k \in \nbb$, but it does not contain $\chi_{\{1,2,\ldots,\infty\}}$. Anyway, the \fcts\ $\chi_{\{1,\ldots,k\}} +g_k$ decrease to $\chi_{\{1,2,\ldots,\infty\}}$ as $k$ tends to $\infty$. Hence,
$$
\chi_{\{1,2,\ldots,\infty\}} \in \scl{\left(\bigcup_{l\in \nbb} \kscl{l}{\acal} \right)},
$$
but $\chi_{\{1,2,\ldots,\infty\}} \notin \kscl{l}{\acal}$ for every $l \in \nbb$.
\end{ex}

We have seen by the previous examples that each iterate closure of $\acal$ might lead to a larger set. Nevertheless, this additional \fcts\ will not contribute to the Shilov boundary in the following sense.

\begin{lem}\label{lem-decr-cont} Let $f$ be \usc\ on $K$ and $(f_n)_{n \in \nbb}$ a sequence of \usc\ \fcts\ decreasing to $f$. Assume that $f$ is bounded above by a \fct\ $g$ which is \lsc\ on $K$, i.e., $f < g$ on $K$. Then there is an index $n_0$ such that $f_n < g$ on $K$ for every $n \geq n_0$.
\end{lem}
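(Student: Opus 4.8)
The plan is to treat this as a Dini-type theorem and upgrade the pointwise inequality $f < g$ to a uniform one by a compactness argument. For each $n \in \nbb$ I would introduce the set
$$
U_n := \{ x \in K : f_n(x) < g(x)\}
$$
and first check that each $U_n$ is open in $K$. To avoid forming the difference $f_n - g$ (and the attendant extended-real arithmetic), I would argue pointwise: given $x_0 \in U_n$, choose a real number $c$ with $f_n(x_0) < c < g(x_0)$. Since $f_n$ is \usc, the set $\{ f_n < c\}$ is open, and since $g$ is \lsc, the set $\{ g > c\}$ is open; their intersection is an open \nbh\ of $x_0$ contained in $U_n$. Hence $U_n$ is open.

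Next I would exploit the monotonicity of the sequence. Because $(f_n)$ decreases, $f_{n+1} \leq f_n$ pointwise, so $U_n \subset U_{n+1}$; thus $(U_n)$ is an increasing sequence of open sets. Moreover the $U_n$ cover $K$: for an arbitrary $x \in K$ we have $f_n(x) \downarrow f(x)$ and $f(x) < g(x)$ by hypothesis, so $f_n(x) < g(x)$ for all sufficiently large $n$, i.e., $x \in U_n$ for some $n$.

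Finally I would invoke compactness of $K$. The increasing open cover $\{U_n\}_{n \in \nbb}$ admits a finite subcover, and since the sets are nested this subcover collapses to a single member: there is an index $n_0$ with $U_{n_0} = K$. By the inclusion $U_{n_0} \subset U_n$ for $n \geq n_0$ we obtain $U_n = K$, which is precisely $f_n < g$ on all of $K$ for every $n \geq n_0$, as claimed. Equivalently, one could phrase the same argument dually with the decreasing family of closed---hence compact---sets $A_n := \{ x \in K : f_n(x) \geq g(x)\}$, whose intersection is empty because $f < g$; the finite intersection property then forces some $A_{n_0}$ to be empty.

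As for difficulty, there is no deep obstacle here, the statement being a semicontinuous Dini lemma. The only point requiring mild care is the possible presence of the values $\pm\infty$ (note that $f < g$ already forces $g > -\infty$ everywhere), and the interpolation of a finite constant $c$ in the openness step is exactly what sidesteps it. The genuine content of the proof is the passage from the pointwise estimate to a uniform one, and that is supplied entirely by the compactness of $K$ together with the monotonicity of $(f_n)$.
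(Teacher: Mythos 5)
Your proof is correct and follows essentially the same compactness (Dini-type) argument as the paper: the paper covers $K$ by point-indexed neighborhoods $U_x$ on which $f_{n_x}<g$ and then takes the maximum of the finitely many indices $n_{x_j}$, whereas you cover $K$ by the increasing open sets $U_n=\{f_n<g\}$ --- the two formulations are interchangeable. Your openness argument via an interpolated constant $c$ is a slightly tidier way to avoid the extended-real difference $f_{n}-g$ that the paper forms, but the substance is identical.
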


\begin{proof} Take a point $x \in K$. Then there is an index $n_x\in \nbb$ such that $f(x) \leq f_{n_x}(x) < g(x)$. Since $f_{n_x}-g$ is \usc\ on $K$, we can find an open \nbh\ $U_x$ of $x$ in $K$ such that $f_{n_x}(y)<g(y)$ for every $y \in U_x$. By compactness of $K$ we can cover $K$ by finitely many open sets $U_{x_1},\ldots,U_{x_l}$ from the covering $\{ U_x\}_{x \in K}$. We set $n_0:=\max\{ n_{x_j} : j = 1,\ldots,l\}$. Since $(f_n)_{n\in\nbb}$ is decreasing, we obtain that $f_n \leq f_{n_0} < g$ on $K$ for every $n\geq n_0$.
\end{proof}

\begin{lem}\label{lem-decr-sh} Let $(f_n)_{n \in \nbb}$ be a sequence of \usc\ \fcts\ on $K$ decreasing to $f$. Then
$\lim_{n \to \infty} \max_K f_n = \max_K f$.
\end{lem}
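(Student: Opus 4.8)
The plan is to set $M_n := \max_K f_n$ and $M := \max_K f$ (both are well defined by Lemma \ref{lem-maxattain}, recalling that $f$ is \usc\ as the pointwise decreasing limit of \usc\ \fcts) and then to show that the monotone sequence $(M_n)$ converges to $M$ by squeezing it from above with the help of Lemma \ref{lem-decr-cont}.

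First I would observe that since $f_n \geq f_{n+1} \geq f$ on $K$, the numbers $M_n$ form a decreasing sequence with $M_n \geq M$ for every $n \in \nbb$. Hence $(M_n)$ converges to some limit $L$ with $L \geq M$. It remains only to establish the reverse inequality $L \leq M$.

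For that I would fix an arbitrary real number $c > M$ and apply Lemma \ref{lem-decr-cont} with the constant (hence \lsc) \fct\ $g \equiv c$: since $f \leq M < c = g$ on $K$, that lemma yields an index $n_0$ such that $f_n < c$ on all of $K$ for every $n \geq n_0$. Because each $f_n$ attains its maximum on $K$, the strict inequality $f_n < c$ everywhere forces $M_n < c$ for $n \geq n_0$, and therefore $L = \lim_{n} M_n \leq c$. As $c > M$ was arbitrary, letting $c$ decrease to $M$ gives $L \leq M$, so $L = M$, as claimed.

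The argument is essentially routine once Lemma \ref{lem-decr-cont} is in hand: the genuine work of converting the pointwise domination $f < g$ into the uniform domination $f_n < g$ via compactness has already been carried out there, and here we merely specialize $g$ to constants. The only point needing a little attention is the degenerate case $M = -\infty$ (that is, $f \equiv -\infty$); but the same estimate applies verbatim for every real $c$ and directly yields $M_n \to -\infty = M$, so no separate treatment is really required.
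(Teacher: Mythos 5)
Your proof is correct and follows essentially the same route as the paper: both arguments reduce the statement to Lemma \ref{lem-decr-cont} applied with a constant comparison \fct\ (the paper takes $g\equiv a$ and argues by contradiction, while you take $g\equiv c$ for arbitrary $c>M$ and pass to the infimum — a cosmetic difference only). Your extra remark on the degenerate case $M=-\infty$ is a harmless and correct observation.
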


\begin{proof} The limit $a:=\lim_{n \to \infty} \max_K f_n$ exists because $(\max_K f_n)_{n \in \nbb}$ is a decreasing sequence bounded below by $\max_K f$. Assume that $a > \max_K f$. By the previous Lemma \ref{lem-decr-cont} we can find a large enough integer $n_0$ such that $a > f_{n_0}(y)$ for every $y \in K$, which is a contradiction to the definition of $a$.
\end{proof}

\begin{cor}\label{cor-shilov-cl} The set of all $\acal$-boundaries coincides with the set of all $\scl{\acal}$-boundaries, i.e.,
$$
b_{\acal}=b_{\scl{\acal}} \qand \SA=\check{S}_{\scl{\acal}}.
$$
\end{cor}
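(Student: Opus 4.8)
The plan is to reduce everything to a single inclusion between the two families of closed boundaries. Since a constant sequence is in particular a decreasing sequence, we have $\acal\subset\scl{\acal}$, and Proposition \ref{prop-ashilov}(6) applied to this inclusion already yields $b_{\scl{\acal}}\subset b_{\acal}$ together with $\check{S}_{\acal}\subset\check{S}_{\scl{\acal}}$. It therefore remains only to prove the reverse inclusion $b_{\acal}\subset b_{\scl{\acal}}$, i.e.\ that every closed $\acal$-boundary is automatically a $\scl{\acal}$-boundary. The identity of the Shilov boundaries then follows at once, because $\SA$ and $\check{S}_{\scl{\acal}}$ are by definition the intersections of exactly these (now equal) families of sets.

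So I would fix a closed $\acal$-boundary $S\in\bA$ and an arbitrary \fct\ $f\in\scl{\acal}$, chosen together with a sequence $f_n\in\acal$ decreasing to $f$, and set $M:=\max_K f$ (which exists by Lemma \ref{lem-maxattain}, as $f$ is \usc). The goal is to produce a point of $S$ at which $f$ attains the value $M$. The key device is the family of super-level sets
$$
T_n:=S\cap\{x\in K:f_n(x)\geq M\},\qquad n\in\nbb,
$$
each of which is closed, being the intersection of the closed set $S$ with the complement of the open sub-level set $\{f_n<M\}$.

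The two properties I would then verify are nestedness and non-emptiness. Nestedness $T_{n+1}\subset T_n$ is immediate from $f_{n+1}\leq f_n$. For non-emptiness, note that $f_n\geq f$ gives $\max_K f_n\geq M$, so on $S(f_n)$ one has $f_n=\max_K f_n\geq M$, whence $S(f_n)\subset\{f_n\geq M\}$; since $S$ is an $\acal$-boundary, $\emptyset\neq S\cap S(f_n)\subset T_n$. (Alternatively one may invoke Lemma \ref{lem-decr-sh} to control the behaviour of $\max_K f_n$.) Thus $(T_n)$ is a decreasing sequence of non-empty closed subsets of the compact space $K$, so by the finite intersection property $\bigcap_n T_n\neq\emptyset$. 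Choosing $x_0\in\bigcap_n T_n$ gives $x_0\in S$ with $f_n(x_0)\geq M$ for all $n$, and hence $f(x_0)=\inf_n f_n(x_0)\geq M$; combined with $f(x_0)\leq M$ this forces $f(x_0)=M$, that is $x_0\in S\cap S(f)$. As $f\in\scl{\acal}$ was arbitrary, $S\in b_{\scl{\acal}}$, which is the desired inclusion.

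I expect the only genuine subtlety to be that $K$ is merely compact Hausdorff and need not be metrizable, so one cannot simply extract a convergent subsequence of a sequence of maximizers $x_n\in S\cap S(f_n)$. Routing the argument through the nested closed sets $T_n$ and the finite intersection property sidesteps this entirely, and this is precisely the reason for building the sets from the super-level sets $\{f_n\geq M\}$, which are nested, rather than directly from the maximum sets $S(f_n)$, which are not. Everything else is routine bookkeeping.
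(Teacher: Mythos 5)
Your proof is correct. The paper gives no explicit argument for this corollary; it is placed directly after Lemmas \ref{lem-decr-cont} and \ref{lem-decr-sh}, and the intended derivation is by contradiction: if a closed $\acal$-boundary $S$ missed $S(f)$ for some $f\in\scl{\acal}$ with $f_n\downarrow f$, then $f<c<\max_Kf$ on $S$ for a suitable constant $c$, and Lemma \ref{lem-decr-cont}, applied on the compact set $S$ with the \lsc\ majorant $g\equiv c$, forces $\max_S f_n<c$ for all large $n$, while $\max_K f_n\geq \max_K f>c$, contradicting $S\in\bA$. Your route replaces this Dini-type argument by a direct construction: the nested closed super-level sets $T_n=S\cap\{f_n\geq M\}$ together with the finite intersection property produce a maximizer of $f$ inside $S$ outright. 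Both arguments use compactness of $K$ at exactly one point (a finite subcover hidden in the proof of Lemma \ref{lem-decr-cont} versus the finite intersection property), and both correctly avoid any metrizability assumption, as you note. What your version buys is self-containedness --- neither auxiliary lemma is needed, since the inequality $\max_K f_n\geq M$ is immediate from $f_n\geq f$ --- at the cost of not reusing machinery the paper develops for its own sake. Your reduction of the remaining work to the single inclusion $b_\acal\subset b_{\scl{\acal}}$ via Proposition \ref{prop-ashilov} (6), and the observation that equality of the two families of closed boundaries immediately gives equality of their intersections, is exactly right.
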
 

\section{Minimal boundary and peak points}

In this section we discuss the relation between the Shilov boundary and peak points based on the main result of Bishop in \cite{Bi}. As before, let $\acal$ always be a subfamily of \usc\ \fcts\ on a compact Hausdorff space $K$.

\begin{defn}  \

\benum

\item We denote by $\BA$ the set of all (possibly non-closed) boundaries for $\acal$ (recall Definition \ref{defn-shilov}).

\item If there exists a subset $\MA$ in $\BA$ such that $\MA$ is contained in every boundary for $\acal$, then this set will be called the \textit{minimal boundary for $\acal$}.

\item A point $x\in K$ is called \textit{peak point for $\acal$} if there is a \fct\ $f \in \acal$ such that $S(f)=\{ x\}$. We say that $f$ \textit{peaks at $x$}. We denote by $\PA$ the set of all \textit{peak points for $\acal$}.

\eenum

\end{defn}

The sets $\MA$, $\PA$ and $\SA$ are possibly empty. If $\MA$ is non-empty, it is not necessarily closed, while $\SA$ is by definition always a closed subset of $K$. The following examples show that the sets $\MA$, $\SA$ and $\PA$ may differ or might be empty.

\begin{ex}\label{ex-pms} \startenum

\new We enumerate the subset $L=[0,1] \cap \qbb$ of $K=[0,1]$ by a sequence $(x_n)_{n\in\nbb}$. For the subclass $\acal=\{ \chi_{\{x_n\}}: n \in \nbb\}$ of \usc\ \fcts\ on $K$, we have that $\PA=\MA=L \subsetneq \SA=[0,1]$.

\new There exists a separating Banach algebra of \cont\ \fcts\ on a compact set with no minimal boundary.

There exists a Banach algebra of \cont\ \fcts\ on a compact set such that the minimal boundary is an open non-empty set. For both examples we refer to \cite{Bi}.

\new By Example \ref{ex-shilov} (1) we can see that there is a subclass $\acal$ of $\USC(K)$ such that $\SA$, $\PA$ and $\MA$ are all empty.

\end{ex}

\begin{rem}

Given $f \in \USC(K)$ define $\acal':=\acal \cup \{ f\}$. The existence of $m_\acal$ does not imply the existence of $m_{\acal'}$ in general. To see this consider $\acal=\{ \chi_{\{ 0\}}\}$ and $f=\chi_{\{ -1,1\}}$ on the interval $[-1,1]$. Even though $m_\acal=\{ 0\}$, $m_{\acal'}$ does not exist. On the other hand it is easy to verify that, if we choose $f$ which peaks at some point $x \in K$, then $m_{\acal'}=m_{\acal} \cup \{ x\}$.
\end{rem}

We give some properties and relations between the above defined sets.

\begin{prop}\label{prop-pms} \

\benum

\item The set $P_\acal$ lies in every $\acal$-boundary $S$ from $B_\acal$. If $P_\acal$ is itself an $\acal$-boundary, then it is exactly the minimal boundary $m_\acal$.

\item The inclusions $\PA \subset \MA \subset \SA$ hold whenever $\MA$ exists.

\item $\SA=\cl{m}_\acal$, if $\MA$ exists.

\item Let $\acal_1\subset\acal_2\subset \USC(K)$. Then we have the following inclusions,
$$
B_{\acal_2} \subset B_{\acal_1} \qand P_{\acal_1} \subset P_{\acal_2}.
$$
If $m_{\acal_1}$ and $m_{\acal_2}$ exist, then $m_{\acal_1}\subset m_{\acal_2}$.

\item Let $\acal=\bigcup_{j\in J}\acal_j$, where $\acal_j$ are subsets of $\USC(K)$. Then $\PA = \bigcup_{j\in J} P_{\acal_j}$. If $m_{\acal_j}$ exists for every $j \in J$, then $\MA$ exists and $\MA = \bigcup_{j\in J} m_{\acal_j}$.

\eenum
\end{prop}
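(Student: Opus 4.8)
The plan is to establish the five assertions in the order stated, since each rests only on the definitions together with the earlier points, the common thread being the single observation that a peak point cannot be avoided by any boundary. For (1), I would start from a peak point $x \in \PA$: by definition there is $f \in \acal$ with $S(f) = \{x\}$, and any boundary $S \in \BA$ must meet $S(f)$, which forces $x \in S$; hence $\PA \subset S$ for every $S \in \BA$. If in addition $\PA$ is itself an $\acal$-boundary, then it is a boundary contained in every boundary, which is precisely the defining property of $\MA$, so $\PA = \MA$. Part (2) is then immediate: when $\MA$ exists it is in particular a boundary, so $\PA \subset \MA$ by (1); and since $\MA$ is contained in every boundary, it lies in every closed boundary, whence $\MA \subset \bigcap_{S \in \bA} S = \SA$.

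For (3) I would argue both inclusions. From (2) we have $\MA \subset \SA$, and $\SA$ is closed by Proposition \ref{prop-ashilov}(1), so $\cl{\MA} \subset \SA$. Conversely, $\cl{\MA}$ is closed and, since it contains the boundary $\MA$, it meets every $S(f)$; thus $\cl{\MA}$ is a closed boundary, i.e. $\cl{\MA} \in \bA$, giving $\SA \subset \cl{\MA}$. The only point needing a moment's care here is that passing to the closure preserves the boundary property, which is clear because enlarging a boundary keeps it a boundary.

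Part (4) reduces to monotonicity under inclusion: if $\acal_1 \subset \acal_2$, then any $\acal_2$-boundary meets $S(f)$ for all $f \in \acal_2 \supset \acal_1$ and hence is an $\acal_1$-boundary, giving $B_{\acal_2} \subset B_{\acal_1}$, while any peaking function for $\acal_1$ also lies in $\acal_2$, giving $P_{\acal_1} \subset P_{\acal_2}$; when both minimal boundaries exist, $m_{\acal_2}$ is an $\acal_2$-boundary, hence an $\acal_1$-boundary, so minimality of $m_{\acal_1}$ forces $m_{\acal_1} \subset m_{\acal_2}$. For (5), the equality $\PA = \bigcup_{j\in J} P_{\acal_j}$ splits into ``$\supseteq$'' (from $P_{\acal_j} \subset \PA$ by (4)) and ``$\subseteq$'' (a peaking function for $\acal$ lies in some $\acal_j$). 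For the minimal boundary I would verify directly that $\bigcup_{j\in J} m_{\acal_j}$ satisfies both defining properties: it is an $\acal$-boundary because any $f \in \acal$ lies in some $\acal_j$ and $m_{\acal_j}$ already meets $S(f)$; and it is contained in every $\acal$-boundary $S$, because each such $S$ is also an $\acal_j$-boundary by (4), so $m_{\acal_j} \subset S$ for every $j$.

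The main obstacle, insofar as there is one in an argument this formal, is the existence-and-identification step in (5): unlike the other parts, which quote a single inclusion, here one must check \emph{both} halves of the definition of the minimal boundary for the candidate set $\bigcup_{j\in J} m_{\acal_j}$, and it is worth being explicit that its being contained in every boundary uses the inclusion $B_\acal \subset B_{\acal_j}$ supplied by (4). Once that is in place, the set simultaneously lies in $\BA$ and inside every boundary, so $\MA$ exists and equals the asserted union.
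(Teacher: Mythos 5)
Your proposal is correct and follows essentially the same route as the paper's own proof: part (1) via the observation that a boundary must meet the singleton $S(f)$ of a peaking function, (2) and (3) by the same two inclusions (with $\cl{m}_\acal$ being a closed boundary), (4) from the definitions, and (5) by verifying that $\bigcup_{j\in J} m_{\acal_j}$ is both an $\acal$-boundary and contained in every $\acal$-boundary using $B_\acal \subset B_{\acal_j}$. No gaps; nothing further to add.
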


\begin{proof} \startenum

\new Let $x \in \PA$ and $f \in \acal$ such that $f$ peaks at $x$. Given an $\acal$-boundary $S$, it is clear that $S \cap S(f)=\{ x\}$. In particular, the point $x$ lies in $S$. This yields the inclusion $\PA \subset S$. Now if $\PA$ lies in $\BA$, then by the previous discussion and by the definition of the minimal boundary for $\acal$, we have that $\PA=\MA$.

\new  Since $\MA \in \BA$ and by the previous property (1), we obtain that $\PA \subset \MA$. Since $\MA$ is the smallest $\acal$-boundary, $\MA \subset S$ for every $S \in \bA$. Hence, $\MA \subset \SA$.

\new Since $\SA$ is closed and $\MA \subset \SA$ by the previous point (2), $\cl{m}_\acal$ is a subset of $\SA$. On the other hand, $\MA$ is contained in $\BA$, and therefore $\cl{m}_\acal$ is a closed $\acal$-boundary. By definition this means that $\SA\subset\cl{m}_\acal$. Hence, $\cl{m}_\acal=\SA$.

\new This inclusions follow immediately from the definitions of the corresponding sets.

\new The identity $\PA = \bigcup_{j\in J} P_{\acal_j}$ is obvious. We show that $m:=\bigcup_{j\in J} m_{\acal_j}$ is a minimal $\acal$-boundary. Pick an arbitrary \fct\ $f \in \acal$. Then $f \in \acal_j$ for some index $j \in J$. By assumption $m_{\acal_j}$ is a minimal boundary for $\acal_j$. Thus, we obtain that
$$
\emptyset \neq S(f) \cap m_{\acal_j} \subset S(f) \cap m.
$$
But this implies that $m \in \BA$. Now let $S$ be an arbitrary $\acal$-boundary. By point (4) we have that $S \in \BA \subset B_{\acal_j}$ for every $j \in J$. Then $m_{\acal_j} \subset S$ for all $j \in J$ and, thus, $m \subset S$. This shows the minimality of $m$, so $\MA=m$.
\end{proof}

In what follows, we present some Bishop-type theorems for subclasses of \usc\ \fcts\ on a metrizable compact space $K$.

\begin{defn} \

\benum

\item A topological space $K$ is \textit{metrizable} if it has a metric which induces the given topology. In this case its topology admits a countable base.

\item A subset $\bcal$ of $\ccal(K)$ is \textit{separating} or \textit{separating points of $K$} if for every $x,y \in K$ there exists a \fct\ $f \in \bcal$ such that $f(x)\neq f(y)$.

\item Given a subclass $\acal$ of $\USC(K)$, it is \textit{strictly separating} or \textit{strictly separating points of $K$} if for every $x,y \in K$ there exist \fcts\ $f_1,f_2 \in \acal$ such that $f_1(x)>f_1(y)$ and $f_2(x)<f_2(y)$.

\eenum

\end{defn}

We recall Bishop's theorem. Further generalizations can be found in \cite{Sic}, \cite{Dales} and \cite{Honary}.

\bdefn
For $\bcal$ being a subset of $\ccal(K)$ we use the same simplification of notations as in Remark \ref{rem-bshilov} above. Namely, we write $B_{\bcal}$, $m_{\bcal}$ and $P_{\bcal}$ instead of $B_{\log|\bcal|}$, $m_{\log|\bcal|}$ and $P_{\log|\bcal|}$, respectively.
\edefn

\begin{thmn}[Bishop, \cite{Bi}] Let $K$ be a compact metrizable Hausdorff space and $\bcal$ a separating uniform subalgebra of $\ccal(K)$. Then the minimal boundary of $\bcal$ exists and is exactly the set of all peak points for $\bcal$.
\end{thmn}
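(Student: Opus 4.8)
The plan is to translate everything into the algebra language of Remark \ref{rem-bshilov}, so that a peak point for $\bcal$ is a point $x$ for which some $f\in\bcal$ satisfies $f(x)=1=\|f\|$ and $|f|<1$ on $K\setminus\{x\}$, while a boundary for $\log|\bcal|$ is a set meeting $S(\log|f|)=\{x:|f(x)|=\max_K|f|\}$ for every $f\in\bcal$. By Proposition \ref{prop-pms}(1) the set $P_\bcal$ is contained in every element of $B_\bcal$, so it suffices to prove that $P_\bcal$ is \emph{itself} a boundary for $\log|\bcal|$; then the second assertion of Proposition \ref{prop-pms}(1) identifies it with $m_\bcal$. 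Concretely I must show that each $f\in\bcal$ attains its maximum modulus at some peak point.

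First I would record the one purely algebraic ingredient: a countable decreasing intersection of peak sets is again a peak set. If $E_n=\{f_n=1\}$ with $\|f_n\|=1$ and $|f_n|<1$ off $E_n$, I replace $f_n$ by $g_n:=\tfrac12(1+f_n)$, which peaks on the same set $E_n$ but now satisfies $\Re g_n\ge0$ and $|g_n|\le1$, and put $g:=\sum_{n\ge1}2^{-n}g_n$. Uniform convergence and completeness of $\bcal$ give $g\in\bcal$, and one checks $\|g\|\le1$ with $g=1$ exactly on $\bigcap_nE_n$ and $|g|<1$ elsewhere, since equality in the triangle inequality would force every $g_n=1$. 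This step uses only countability, not metrizability.

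Next comes the main lemma: every peak set $E_0$ contains a peak point. I order the peak sets contained in $E_0$ by reverse inclusion. Metrizability enters precisely here: $K$ is second countable, so for any chain the complements $K\setminus P$ form an open family which by Lindelöf has a countable subunion, letting me extract a countable cofinal subchain; by the intersection lemma its intersection is a peak set and serves as an upper bound. Zorn's lemma then produces a minimal peak set $M\subseteq E_0$. Using that the restriction $\bcal|_M$ is again a uniform algebra on $M$ and that peak sets of $\bcal|_M$ are peak sets of $\bcal$, I would show $M$ is a single point: if $M$ had two points, $\bcal$ would separate them, giving a non-constant $g\in\bcal|_M$; choosing $w_0\in g(M)$ and a unit $a\in\cc$ with $\Re(\bar a w)<\Re(\bar a w_0)$ for all $w\in g(M)\setminus\{w_0\}$ (an exposed point of the planar compact set $g(M)$) and forming the resolvent-type function $h:=1/\bigl(1-(\bar a g-\bar a w_0)\bigr)$, which lies in the uniform algebra because $\Re(\bar a g-\bar a w_0)\le0$, one gets $|h|\le1$ with $h=1$ exactly on the proper non-empty subset $\{g=w_0\}$. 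This is a peak subset of $M$, contradicting minimality, so $M=\{x\}$ with $x$ a peak point. I expect this singleton step to be the main obstacle, since it is where mere point-separation must be upgraded to a genuine proper peak subset, and where both the algebra/completeness structure (to build $h$ and to know $\bcal|_M$ is closed) and the metrizability (to run Zorn via Lindelöf) are indispensable.

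Finally I would close the loop. Given $f\in\bcal$ with $\|f\|=1$, I pick $x_0$ with $|f(x_0)|=1$ and multiply $f$ by a unimodular constant so that $f(x_0)=1$; then $\{f=1\}$ is a non-empty peak set, peaked by $\tfrac12(1+f)$, and it is contained in $S(\log|f|)$, which is unchanged by the rotation. By the main lemma this peak set contains a peak point, which therefore lies in $S(\log|f|)$. Hence $P_\bcal$ meets $S(\log|f|)$ for every $f\in\bcal$, so $P_\bcal\in B_\bcal$, and the initial reduction yields $m_\bcal=P_\bcal$.
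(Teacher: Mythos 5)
Your argument is essentially correct, but note first that the paper does not actually prove this statement: it is quoted from Bishop's article \cite{Bi}, and the only proof in the paper along these lines is that of Theorem \ref{shilov-peak}, which adapts Bishop's original argument to closed cones of \usc\ \fcts. Measured against that argument, your route is genuinely different. Bishop (and the paper's Theorem \ref{shilov-peak}) fixes $f$, applies Zorn's lemma to families of peak sets containing $S(f)$ with the finite intersection property to obtain a \emph{maximal} family $\Gamma_0$ with intersection $D$, uses metrizability to write $D$ as a countable intersection, builds one function $g$ with $S(g)\subset D$, and shows $S(g)$ is a singleton by constructing from a separating function a new peak function $\vphi$ whose peak set must already belong to $\Gamma_0$ by maximality, the contradiction coming from $S(g)\subset S(\vphi)$. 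You instead run Zorn on the peak sets themselves, ordered by reverse inclusion inside the fixed peak set $\{f=1\}$, to produce a \emph{minimal} peak set $M$, and contradict minimality by exhibiting a proper non-empty peak subset of $M$. This is the standard textbook variant; it gives a cleaner logical skeleton and isolates metrizability exactly where it is needed (the Lindel\"of reduction of a chain to a countable decreasing intersection), at the price of two auxiliary facts about uniform algebras that you assert but do not prove: that $1/(1-w)$ lies in $\ol{\bcal|_M}$ when $\Re w\le 0$ on $M$ (this needs the spectral argument that $1$ lies in the unbounded component of $\cc\setminus w(M)$, so that $1-w$ is invertible in the uniform algebra and not merely in $\ccal(M)$), and that a peak set of the restriction algebra $\ol{\bcal|_M}$ is again a peak set of $\bcal$ on $K$ (this needs the damping argument with high powers of the function peaking on $M$). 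Both are standard, and your exposed-point choice of $w_0$ is stronger than necessary: any $w_0\in g(M)$ maximizing $\Re(\bar{a}\,\cdot)$ over $g(M)$ for a direction $a$ in which $\Re(\bar{a}g)$ is non-constant on $M$ already makes $\{g=w_0\}\cap M$ a proper non-empty subset. With those two lemmas supplied, the proof is complete.
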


\begin{cor}\label{bishop-unhol} Suppose $\bcal$ is a union of separating uniform subalgebras $(\bcal_j)_{j \in J}$ of $\ccal(K)$, where $K$ is a metrizable compact Hausdorff space. Then $m_\bcal$ exists and
$$
m_\bcal = P_\bcal \qand \check{S}_\bcal=\cl{P_\bcal}.
$$
\end{cor}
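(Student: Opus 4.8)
The plan is to reduce everything to Bishop's theorem applied componentwise and then to glue the pieces using Proposition \ref{prop-pms}(5) together with the notational conventions of Remark \ref{rem-bshilov}. First I would translate the statement into the language of \usc\ \fcts: by the convention fixed above, $m_\bcal$, $P_\bcal$ and $\check{S}_\bcal$ mean $m_{\log|\bcal|}$, $P_{\log|\bcal|}$ and $\check{S}_{\log|\bcal|}$, respectively. Put $\acal:=\log|\bcal|$ and $\acal_j:=\log|\bcal_j|$. Since $\bcal=\bigcup_{j\in J}\bcal_j$, we have $\acal=\bigcup_{j\in J}\acal_j$, so the situation is exactly the one treated by the union-statement in Proposition \ref{prop-pms}(5).

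Next I would apply Bishop's theorem to each factor. Each $\bcal_j$ is, by hypothesis, a separating uniform subalgebra of $\ccal(K)$, and $K$ is a compact metrizable Hausdorff space; hence Bishop's theorem yields that the minimal boundary $m_{\acal_j}=m_{\bcal_j}$ exists and equals $P_{\acal_j}=P_{\bcal_j}$. In particular $m_{\acal_j}$ exists for every $j\in J$, which is precisely the hypothesis required to invoke the second half of Proposition \ref{prop-pms}(5). That proposition then gives simultaneously that $m_\acal$ exists with $m_\acal=\bigcup_{j\in J}m_{\acal_j}$ and that $P_\acal=\bigcup_{j\in J}P_{\acal_j}$. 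Combining these two identities with the componentwise equalities $m_{\acal_j}=P_{\acal_j}$ from Bishop yields $m_\acal=\bigcup_{j\in J}m_{\acal_j}=\bigcup_{j\in J}P_{\acal_j}=P_\acal$, i.e.\ $m_\bcal=P_\bcal$.

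Finally, for the Shilov boundary part I would simply feed the existence of $m_\acal$ into Proposition \ref{prop-pms}(3), which states $\check{S}_\acal=\cl{m}_\acal$ whenever $m_\acal$ exists. Since we have just shown $m_\acal=P_\acal$, this gives $\check{S}_\bcal=\check{S}_\acal=\cl{m}_\acal=\cl{P_\acal}=\cl{P_\bcal}$, completing the proof. I do not expect a genuine obstacle here, as the argument is purely a gluing of Bishop's theorem over the index set $J$; the only point demanding care is the bookkeeping of the $\log|\cdot|$ conventions and the verification that each $\bcal_j$ really satisfies all hypotheses of Bishop's theorem (separating, uniform subalgebra, metrizable compact $K$), so that $m_{\acal_j}$ exists for every $j$ and Proposition \ref{prop-pms}(5) becomes applicable.
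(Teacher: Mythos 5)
Your proposal is correct and follows essentially the same route as the paper: Bishop's theorem applied to each $\bcal_j$ to get $m_{\bcal_j}=P_{\bcal_j}$, then Proposition \ref{prop-pms}(5) to glue the minimal boundaries and peak point sets over $J$, and finally Proposition \ref{prop-pms}(3) for $\check{S}_\bcal=\cl{P}_\bcal$. The only difference is that you spell out the $\log|\cdot|$ bookkeeping explicitly, which the paper leaves implicit.
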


\begin{proof} By Bishop's theorem $m_{\bcal_j}$ exists and coincides with $P_{\acal_j}$ for every $j \in J$. By Proposition \ref{prop-pms} (5), we obtain that $m_\bcal$ is the minimal boundary for $\bcal$ and
$$
P_{\bcal} = \bigcup_{j \in J} P_{\bcal_j} = \bigcup_{j \in J} m_{\bcal_j} = m_\bcal.
$$
The identity $\check{S}_\bcal=\cl{P}_\bcal$ follows now from Proposition \ref{prop-pms} (3).
\end{proof}

For closed cones of \usc\ \fcts, i.e., for cones $\acal$ having the property $\scl{\acal}=\acal$, we are able to obtain another Bishop type theorem. The proof is nearly the same as in Theorem 1 in \cite{Bi}. A similar result for subfamilies of non-negative \cont\ \fcts\ was already obtained by Siciak in \cite{Sic} (see Theorem 3).

\begin{thm}\label{shilov-peak} Let $K$ be a metrizable compact Hausdorff space. Let $\acal$ be a closed cone in $\USC(K)$ containing real constants and strictly separating points of $K$. Then $m_\acal$ exists and coincides with the set of all peak points for $\acal$; i.e.,
$$
\MA = \PA \quad \mathrm{and} \quad \SA = \cl{P}_\acal.
$$
\end{thm}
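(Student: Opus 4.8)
The plan is to reduce everything to the single statement that the set of peak points $\PA$ is itself an $\acal$-boundary, i.e. that $\PA \in \BA$, and then to invoke Proposition \ref{prop-pms}. Indeed, once $\PA \in \BA$ is known, part (1) of that proposition shows that $\PA$ is contained in every boundary and hence is the minimal one, so $\MA$ exists with $\MA = \PA$; part (3) then gives $\SA = \cl{m}_\acal = \cl{P}_\acal$. Thus the entire content of the theorem is the claim that every $f \in \acal$ attains its maximum at some peak point, that is, $S(f) \cap \PA \ne \emptyset$ for all $f \in \acal$.

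To prove this, fix $f \in \acal$. Since $\acal$ is a cone containing the real constants, we may rescale and add a constant so that $\max_K f = 0$ and $f \le 0$ on $K$; write $F_0 := S(f)$, which is a nonempty compact set by Lemma \ref{lem-maxattain} and Proposition \ref{prop-ashilov}(2). As $K$ is metrizable, fix a metric $d$ inducing the topology and a countable base $\{U_n\}_{n \in \nbb}$. The goal is to build a decreasing sequence $(h_n)_{n \ge 0}$ in $\acal$ with $h_0 = f$, each $h_n \le 0$ and $\max_K h_n = 0$, such that the compacta $F_n := S(h_n)$ satisfy $\diam F_n \to 0$. Because $h_{n+1} \le h_n \le 0$ with both maxima equal to $0$, the sets are automatically nested, $F_{n+1} \subset F_n \subset F_0$, so by compactness $\bigcap_n F_n = \{x_0\}$ for a single point $x_0 \in F_0$. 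Passing to the limit $h := \lim_n h_n$, the closedness $\scl{\acal} = \acal$ (Definition \ref{def-cl-cones}) gives $h \in \acal$, and Lemma \ref{lem-decr-sh} yields $\max_K h = \lim_n \max_K h_n = 0$. Any $z \ne x_0$ lies outside some $F_N$, so $h(z) \le h_N(z) < 0$, while $h(x_0) = 0$; hence $S(h) = \{x_0\}$ and $x_0$ is a peak point lying in $S(f)$, as required.

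The crux is the inductive shrinking step, which I would isolate as the following localization lemma: given $h \in \acal$ with $\max_K h = 0$, $h \le 0$ and $F = S(h)$, a point $x \in F$, and a \nbh\ $V$ of $x$, produce $\phi \in \acal$ with $\phi \le 0$, $\phi(x) = 0$, and $\phi < 0$ on the compact set $F \setminus V$. Then $h' := h + \phi \in \acal$ again has maximum $0$ (it equals $\phi$ on $F$ and is negative off $F$), satisfies $h' \le h$, and $S(h') \subset F \cap \cl{V}$ still contains $x$. Applying this with $V = B(x_n, r_n)$ for a chosen $x_n \in F_n$ and $r_n \downarrow 0$ forces $\diam F_{n+1} \le 2 r_n \to 0$. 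I expect this localization lemma to be the main obstacle. Strict separation only supplies, for each $y \in F \setminus V$, a \fct\ $g_y \in \acal$ with $g_y(x) > g_y(y)$, which after subtracting the constant $g_y(x)$ vanishes at $x$ and is negative near $y$, but need \emph{not} be $\le 0$ globally. Since a cone is closed only under nonnegative linear combinations (not under $\max$ or products, unlike a uniform algebra), a naive finite sum $\sum_i \eps_i g_{y_i}$ over a finite subcover of $F \setminus V$ cannot in general be made simultaneously nonpositive everywhere: the positive parts of the individual summands are not controllable by the negative part of a single one.

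To overcome this I would exploit the closure under decreasing limits instead of insisting on one finite sum: treat the base sets $U_n$ one at a time, at stage $n$ arranging only that the maximum of the current function drops strictly below $0$ on the part of $F$ outside a shrinking ball, and accumulate these adjustments through a decreasing sequence whose limit stays in $\acal$ by Definition \ref{def-cl-cones}. Here Lemma \ref{lem-decr-cont} is exactly the right tool, since it controls how far one must pass along a decreasing sequence before a strict inequality $h_n < g$ against a \lsc\ barrier $g$ holds uniformly on $K$, thereby converting local separation into a global nonpositivity statement. This iterative passage from pairwise separation to separation of a point from an entire compact set, carried out within a mere cone, is the delicate heart of the argument; the remainder is the bookkeeping of the nested construction above, after which the theorem follows from the reduction to Proposition \ref{prop-pms} recorded at the outset.
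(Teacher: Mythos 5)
Your reduction to showing $S(f)\cap\PA\neq\emptyset$ via Proposition \ref{prop-pms} is the same as the paper's, and correct. The fatal problem is the localization lemma on which your whole nested construction rests: given $h\in\acal$ with $F=S(h)$, an \emph{arbitrary} point $x\in F$ and a \nbh\ $V$ of $x$, you ask for $\phi\in\acal$ with $\phi\le 0$, $\phi(x)=0$ and $\phi<0$ on $F\setminus V$. This is false, not merely hard to prove. If it held, you could localize around any point of $S(f)$ with shrinking radii and conclude that \emph{every} point of $S(f)$ is a peak point, i.e.\ $S(f)\subset\PA$ for all $f$, which is far stronger than the theorem. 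Concretely, take $K=\cl{\Delta}$ the closed unit disc, $\acal$ the (closed, strictly separating, constant-containing) cone of functions \sh\ on $\Delta$ and \usc\ on $\cl{\Delta}$, $h\equiv 0$ so that $F=K$, and $x=0$: a function $\phi\in\acal$ with $\phi(0)=0\geq\phi$ and $\phi<0$ off a small ball about $0$ would violate the maximum principle. So the point around which one can localize cannot be chosen freely; producing even \emph{one} good point is precisely the content of the theorem, and your sketch of ``accumulating adjustments through decreasing sequences'' supplies no mechanism for finding it. You correctly diagnose that pairwise strict separation plus the cone structure does not yield global nonpositivity, but the proposed repair does not address the real obstruction.

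The paper circumvents this with Bishop's original machinery, which your proposal does not reproduce: Zorn's lemma yields a maximal family $\Gamma_0$ of peak sets containing $S(f)$ with the finite intersection property; metrizability lets one write $D=\bigcap_{\gamma\in\Gamma_0}\gamma$ as a countable intersection $\bigcap_n\gamma_n$, and $g=\sum_n f_n\in\acal$ (by closedness of the cone) satisfies $S(g)\subset D$. That $S(g)$ is a single point is then proved by contradiction using the key compensation trick: the separating function $h$ is normalized so that $\max_{S(g)}h=0$ (\emph{not} so that it vanishes at a prescribed point), whence $T_n=\{h\ge 1/n\}$ is disjoint from $S(g)$ and $g<0$ there, so $\vphi=h+\sum_n c_ng$ with sufficiently large $c_n$ is $\le 0$ everywhere; maximality of $\Gamma_0$ forces $D\subset S(\vphi)$, which contradicts $\vphi<0$ on part of $S(g)$. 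Note that this normalization is exactly what your scheme cannot afford, since it surrenders control over which point of the current peak set survives to the next stage.
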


\begin{proof} In view of Proposition \ref{prop-pms} (1) and (2), we only need to show that $P_\acal$ is a non-empty $\acal$-boundary or, equivalently, $\PA\cap S(f) \neq \emptyset$ for every $f \in \acal$.

Fix a \fct\ $f\in\acal$. Denote by $\Gamma$ the set of all peak sets for $\acal$, i.e.,
$$
\Gamma:=\{ \gamma \subset K : \exists \ f_\gamma \in \acal \ \mathrm{such \ that} \ S(f_\gamma)=\gamma\}.
$$
Let $\mathfrak{S}$ be the set of subsets $\tilde{\Gamma}$ of $\Gamma$ which contain $S(f)$ and which have the \textit{finite intersection property (fip)}, i.e., for every finite family $\{ \gamma_i\}_{i\in I}$ of elements in $\tilde{\Gamma}$ its intersection $\bigcap_{i\in I} \gamma_i$ is non-empty. Let $(\tilde{\Gamma}_j)_{j \in J}$ be a totally ordered set in $\mathfrak{S}$. We infer that $\ucal:=\bigcup_{j\in J} \tilde{\Gamma}_j$ is an upper bound for elements in $(\tilde{\Gamma}_j)_{j \in J}$ and that it is contained in $\mathfrak{S}$. Indeed, it is obvious that $\ucal$ bounds all elements of $(\tilde{\Gamma}_j)_{j\in J}$ and that $S(f)$ lies in $\ucal$. Let $\{ \gamma_i\}_{\in I}$ be a finite family in $\ucal$. Since $(\tilde{\Gamma}_j)_{j \in J}$ is totally ordered, there is an index $j_0 \in J$ such that $\gamma_i \in \tilde{\Gamma}_{j_0}$ for every $i \in I$. But $\tilde{\Gamma}_{j_0}$ has the (fip). Therefore, $\bigcap_{i\in I} \gamma_i$ is non-empty. This implies that $\ucal$ has the (fip) and, thus, it is contained in $\mathfrak{S}$. By Zorn's Lemma $\mathfrak{S}$ has a maximal element $\Gamma_0$. It contains $S(f)$, has the (fip) and no larger subset of $\Gamma$ has the (fip).

Since all the sets in $\Gamma_0$ are closed (see Proposition \ref{prop-ashilov} (2)), the set $K$ is compact and $\Gamma_0$ has the (fip), it follows that the set $D:=\bigcap_{\gamma \in \Gamma_0} \gamma$ is closed and non-empty. The set $K\setminus D$ is covered by open sets $K\setminus\gamma$, where $\gamma \in \Gamma_0$. Since $K$ and, therefore, also $K\setminus D$ are metrizable, we can choose a countable sequence $(\gamma_n)_{n\in\nbb}$ of sets $\gamma_n$ in $\Gamma_0$ such that $D=\bigcap_{n \in \nbb}\gamma_n$.

Let $x_0$ be a point in $D$. Then the \fcts\ $f_n:=f_{\gamma_n}-f_{\gamma_n}(x_0)$ fulfill $f_n\in\acal$, $S(f_n)=S(f_{\gamma_n})=\gamma_n$, $f_n\leq 0$ on $K$ and $f_n(x_0)=0$.

Define $g$ as the \fct\ $g:=\sum_{n=1}^\infty f_n$. Since $g$ is a limit of a decreasing sequence of \fcts\ $\sum_{n=1}^k f_n$ in $\acal$, and since $\acal$ is closed, we deduce that $g(x_0)=0$, $g\leq 0$ on $K$ and that $g$ is contained in $\acal$. In addition, if $x\in K \setminus \gamma_n$, then $f_n(x)<0$. This implies that $g$ can not attain a maximum on $K$ in $x$ because $g(x)<0$ and $g(x_0)=0$. Therefore, $g$ can only attain maximal values on $K$ in $\gamma_n$, i.e., $S(g)\subset \gamma_n$. Since this inclusion holds for all $n\in\nbb$, we have that $S(g)$ is a subset of $D$.

We claim that $S(g)$ contains only a single point, namely $x_0$. If this is true, $x_0$ is a peak point for $\acal$. Then by the construction of $g$ and the definition of $D$ we have that $x_0 \in S(g)\subset D \subset S(f)$. But this means that $x_0\in \PA\cap S(f)$. In particular, the intersection $\PA\cap S(f)$ is non-empty. Since $f$ is an arbitrary function in $\acal$, it follows from the definition that $\PA$ is an $\acal$-boundary which implies that $\MA \subset \PA$.

It remains to proof the claim above that $S(g)$ consists only of a single point. Assume that this is false so that $S(g)$ contains more than a single point. Since $\acal$ separates the points of $K$, there is a \fct\ $h$ in $\acal$ and a point $x_1 \in S(g)$ such that $\D h(x_1)=\max_{S(g)} h=0$ and $h$ is not constant on $S(g)$. Then the set $E:=\{ x \in S(g): h(x)=0\}$ is a proper closed subset of $S(g)$ containing $x_1$.

Consider the sets $T_n:=\{ x \in K : h(x)\geq 1/n\}$, where $n \in \nbb$. These sets are closed and disjoint from $S(g)$. This means that $g<0$ on $T_n$ for every $n\in\nbb$. Hence, for each $n\in\nbb$ we can choose a large enough constant $c_n\in\nbb$ such that $\D \max_{T_n}\{h+c_n g\}<0$. Recall that $g$ is non-negative on $K$.

Define the \fct\ $\vphi$ by $\vphi:=h+\sum_{n=1}^\infty c_n g$. Since $\vphi$ is the limit of a decreasing sequence of \fcts\ $h+\sum_{n=1}^k c_n g \in \acal$, it also lies in $\acal$. For a point $x \in K\setminus \bigcup_n T_n$ we have that $h(x)\leq 0$ and $g(x)\leq 0$ and therefore $\vphi(x)\leq 0$. In addition, on $T_j$ it holds that
$$
\vphi = h + \sum_{n=1}^\infty c_n g \leq \max_{T_j}\{ h + c_j g\}+ \sum_{n=1,n\neq j}^\infty c_j g <0.
$$
This implies that $x_1 \in S(\vphi)$ because $\vphi(x_1)=0$ and $\vphi \leq 0$ on $K$. Moreover, we have that $S(\vphi)\in\Gamma$ and  $x_1\in S(g)\cap S(\vphi)$. We assert that $S(\vphi) \in \Gamma_0$. To show this, we set $\Gamma_1:=\Gamma_0 \cup \{ S(\vphi)\}$. Since $S(g) \subset D$, we have that
$$
\emptyset \neq S(g) \cap S(\vphi) \subset S(\vphi) \cap D \subset S(\vphi) \cap \gamma
$$
for every $\gamma \in \Gamma_0$. Together with the (fip) of $\Gamma_0$ it follows that $\Gamma_1 \in \mathfrak{S}$. Since $\Gamma_0 \subset \Gamma_1$ and by the maximality of $\Gamma_0$, we conclude that $\Gamma_0=\Gamma_1$ and, thus, $S(\vphi) \in \Gamma_0$.

Therefore, $S(g)\subset D \subset S(\vphi)$, since $D=\bigcap_{\gamma\in\Gamma_0}\gamma$. Recall that $S(g)\setminus E\neq \emptyset$. Now if $x \in S(g)\setminus E$, we have that $g(x)=0$ but $h(x)<0$ and thus $\vphi(x)<0$. Hence, $S(g)\setminus E$ and $S(\vphi)$ are disjoint, which is a contradiction to $S(g) \subset S(\vphi)$. Finally, we have shown that $m_\acal$ is contained in $P_\acal$.
\end{proof}

Since the families of \fcts\ we will define later do not form cones, we need another peak point theorem.

\begin{defn} Let $\acal$ be a subclass of \usc\ \fcts\ on $K$ and let $\Theta$ be a subset of non-negative \cont\ \fcts\ on $K$ with the following property: for each $x \in K$ and each closed subset $S$ of $K$ with $x \notin S$ there exists a \fct\ $\vartheta \in \Theta$ such that $S(\vartheta)=\{ x\}$ and $\vartheta$ vanishes on $S$. We say that a \fct\ $f \in \acal$ is a \textit{strictly-$\acal$-\fct\ with respect to $\Theta$} if for every $\vartheta \in \Theta$ there is a number $\eps_0>0$ such that $f+\eps \vartheta \in \acal$ for every $\eps \in (-\eps_0,\eps_0)$. The subfamily of $\acal$ consisting of all strictly-$\acal$-\fcts\ with respect to $\Theta$ is denoted by $\acal[\Theta]$.
\end{defn}

\begin{thm}\label{bishop-sa} Let $\acal$ be a subclass of \usc\ \fcts\ on $K$. Suppose that there exist a subclass $\Theta$ as in the previous definition and a positive \fct\ $\omega \in \acal[\Theta]$ such that $\acal + \{\eps\omega\} \in \acal[\Theta]$ for every positive number $\eps>0$. Then $\check{S}_{\acal}=\cl{P}_{\acal}\in \bA$.
\end{thm}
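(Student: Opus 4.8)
The plan is to reduce everything to the single assertion that $\cl{P}_\acal$ is a closed $\acal$-boundary. Note first that Proposition \ref{prop-pms} (1) gives $\PA\subset S$ for every boundary $S\in\bA$, hence $\PA\subset\bigcap_{S\in\bA}S=\check{S}_\acal$; since $\check{S}_\acal$ is closed this yields $\cl{P}_\acal\subset\check{S}_\acal$ with no hypotheses at all. If in addition I can show $\cl{P}_\acal\in\bA$, then $\check{S}_\acal=\bigcap_{S\in\bA}S\subset\cl{P}_\acal$, so $\check{S}_\acal=\cl{P}_\acal$ and this common set is a closed $\acal$-boundary, which is exactly the claim. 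Thus the whole content is the statement $\cl{P}_\acal\in\bA$.

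The key observation I would prove is that maximum points of functions in $\acal[\Theta]$ are automatically peak points: \emph{if $h\in\acal[\Theta]$ and $x_0\in S(h)$, then $x_0\in\PA$.} By the defining property of $\Theta$ there is $\vartheta\in\Theta$ with $S(\vartheta)=\{x_0\}$, so $\vartheta\geq 0$ attains its maximum $\max_K\vartheta=\vartheta(x_0)$ only at $x_0$, while $h$ attains $\max_K h$ on $S(h)\ni x_0$. For every $\eps>0$ one then has $(h+\eps\vartheta)(x)\leq\max_K h+\eps\,\vartheta(x_0)$, with equality forcing simultaneously $h(x)=\max_K h$ and $\vartheta(x)=\vartheta(x_0)$, i.e.\ $x=x_0$. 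Hence $S(h+\eps\vartheta)=\{x_0\}$. Because $h\in\acal[\Theta]$, the \fct\ $h+\eps\vartheta$ lies in $\acal$ for all sufficiently small $\eps>0$, so it peaks at $x_0$ and $x_0\in\PA$. In particular $S(h)\subset\PA$ for every $h\in\acal[\Theta]$.

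I would then promote this to all of $\acal$ by using $\omega$ to push a general \fct\ into $\acal[\Theta]$. Suppose, for contradiction, that $\cl{P}_\acal$ is not an $\acal$-boundary; then some $f\in\acal$ does not attain its global maximum on $\cl{P}_\acal$, i.e.\ $M':=\max_{\cl{P}_\acal}f<M:=\max_K f$ (both maxima are attained, $f$ being \usc\ on the compact sets $\cl{P}_\acal$ and $K$, cf.\ Lemma \ref{lem-maxattain}). For $\eps>0$ set $g_\eps:=f+\eps\omega$, which lies in $\acal[\Theta]$ by hypothesis, so the previous step gives $S(g_\eps)\subset\PA\subset\cl{P}_\acal$; consequently the global maximum of $g_\eps$ is attained inside $\cl{P}_\acal$, i.e.\ $\max_K g_\eps=\max_{\cl{P}_\acal}g_\eps$. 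Comparing the two sides, $\max_K g_\eps\geq\max_K f=M$ because $\omega>0$, whereas $\max_{\cl{P}_\acal}g_\eps\leq M'+\eps\max_K\omega$ since $\omega$ is continuous (hence bounded) on $K$. Choosing $\eps$ with $\eps\max_K\omega<M-M'$ produces $M\leq M'+\eps\max_K\omega<M$, a contradiction. Therefore $\cl{P}_\acal\in\bA$, and together with the first paragraph this finishes the proof.

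The heart of the argument is the short perturbation step showing $S(h)\subset\PA$ for $h\in\acal[\Theta]$; the only real subtlety there is verifying that $h+\eps\vartheta$ attains its maximum \emph{uniquely} at $x_0$, which rests precisely on $\vartheta$ having a strict peak at $x_0$ together with $x_0$ already being a maximum point of $h$. The passage from $\acal[\Theta]$ to $\acal$ via $\omega$ is then a clean comparison of maxima, and I expect no difficulty beyond taking care to carry out the sublevel comparison on the compact set $\cl{P}_\acal$, so that all the relevant maxima are genuinely attained; this is also what keeps the argument valid for a general compact Hausdorff $K$, with no metrizability assumption needed.
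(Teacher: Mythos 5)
Your proof is correct. It shares with the paper the two basic perturbation moves---adding a small multiple of a $\vartheta\in\Theta$ peaking at a chosen point, and adding $\eps\omega$ to push a function of $\acal$ into $\acal[\Theta]$---but it organizes them genuinely differently. The paper first proves $\cl{P}_{\acal[\Theta]}=\check{S}_{\acal[\Theta]}$ by contradiction (choosing a $\theta\in\Theta$ that in addition \emph{vanishes} on $\cl{P}_{\acal[\Theta]}$) and then transfers the identity from $\acal[\Theta]$ to $\acal$ through the closure machinery of Section 2: the functions $f+(1/n)\omega$ decrease to $f$, so $\acal\subset\scl{\acal[\Theta]}$ and Corollary \ref{cor-shilov-cl} gives $b_\acal=b_{\acal[\Theta]}$ and $\check{S}_{\acal[\Theta]}=\check{S}_\acal$. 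You instead isolate the sharper local statement that \emph{every} maximum point of \emph{every} $h\in\acal[\Theta]$ is a peak point for $\acal$ (the paper extracts only one peak point per function, and only inside a contradiction argument), and then verify directly that $\cl{P}_\acal$ is an $\acal$-boundary via the quantitative comparison $M\le\max_K g_\eps=\max_{\cl{P}_\acal}g_\eps\le M'+\eps\max_K\omega$. This bypasses Corollary \ref{cor-shilov-cl} entirely and never uses the requirement that $\vartheta$ vanish on a prescribed closed set, so your argument shows that this part of the definition of $\Theta$ is superfluous for the theorem; what the paper's route buys in exchange is the intermediate identity $\cl{P}_{\acal[\Theta]}=\check{S}_{\acal[\Theta]}$ for the strict subclass itself, and a proof pattern that reuses the decreasing-sequence lemmas already established. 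One cosmetic slip: $\omega$ is only assumed to be a positive element of $\acal[\Theta]\subset\USC(K)$, not continuous; but upper semicontinuity on the compact $K$ already guarantees that $\max_K\omega$ is finite and attained, which is all your estimate actually uses.
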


\begin{proof} First, observe that $P_{\acal[\Theta]}$ is non-empty. Indeed, the \fct\ $\omega$ attains its maximum on $K$, say at a point $x_0 \in K$. Pick a \fct\ $\vartheta \in \Theta$ with $S(\vartheta)=\{ x_0\}$. Then there is a positive number $\delta>0$ such that $\omega+\delta\vartheta$ lies in $\acal$. But then $2\omega+\delta\vartheta$ is in $\acal[\Theta]$ by the assumption made on $\omega$. Moreover, $S(2\omega+\delta\vartheta)=\{ x_0\}$ and, thus, $x_0 \in P_{\acal[\Theta]}$.

The set $P_{\acal[\Theta]}$ is a subset of $\check{S}_{\acal[\Theta]}$. We show that $S:=\cl{P}_{\acal[\Theta]}$ is a boundary for the class $\acal[\Theta]$. If not, there exists a \fct\ $f \in \acal[\Theta]$ such that $\max_K f > \max_{S} f$. For a small enough number $\eps_0>0$ we have that $\max_K g > \max_{S} g$, where $g:=f+\eps_0 \omega$. Then there exists a point $x_1 \in K\setminus S$ such that $g(x_1)=\max_{K} g$. Let $\theta$ be a \fct\ from $\Theta$ such that $S(\theta)=\{ x_1\}$ and $\theta$ vanishes on $S$. In particular, $\theta(x_1)>0$. Then for a small enough number $\eps_1>0$ the \fct\ $f+\eps_1\theta$ is in $\acal$. Hence, the \fct\ $h:=g + \eps_1\theta=f+\eps_1\theta+\eps_0 \omega$ lies in $\acal[\Theta]$ and fulfills $S(h)=\{ x_1\}$. Thus, $x_1 \in P_{\acal[\Theta]} \subset S$. But this contradicts to
$$
\max_{S} h = \max_{S} g < \max_K g = g(x_1) < h(x_1) \leq \max_{S} h.
$$
Therefore, $\check{S}_{\acal[\Theta]}$ is contained in $S$. Altogether, we have that $\cl{P}_{\acal[\Theta]}=\check{S}_{\acal[\Theta]}$.

Let $f$ be an arbitrary \fct\ from $\acal$. Then the sequence $(f_n)_{n\in\nbb}$ of \fcts\ $f_n:=f+(1/n)\omega$ in $\acal[\Theta]$ decreases to $f$. This implies that $\acal$ lies in the closure of $\acal[\Theta]$. Since $\acal[\Theta]$ lies in $\acal$ and in view of  Corollary \ref{cor-shilov-cl}, we have that $\bA = b_{\acal[\Theta]}$ and $\check{S}_{\acal[\Theta]}=\check{S}_\acal$. Finally, the proof is done due to the following inclusions,
$$
\SA = \check{S}_{\acal[\Theta]} = \cl{P}_{\acal[\Theta]} \subset \cl{P}_\acal \subset \SA.
$$
\end{proof}

\section{\qpsh\ and \qhol\ \fcts}

We give a short overview of what is known about \qpsh\ and \qhol\ \fcts.

\begin{defn}\label{def-qpsh} Let $U$ be an open set in $\cbb^N$ and $u:U\to[-\infty,\infty)$ be an \usc\ \fct\ on $U$, i.e., $\{ z \in U : u(z)<c\}$ is open for every $c \in \rbb$. Let $q\in\{0,1,\ldots,N{-}1\}$.

\benum

\item The \fct\ $u$ is called \textit{\sph}\ on $U$ if for every ball $B\relc U$ and every \fct\ $h$ which is
\ph\ in a \nbh\ of the closure of $B$ and fulfills $u\leq h$ on $bB$ one has that $u\leq h$ on $B$.

\item The \fct\ $u$ is called \textit{\qpsh}\ in $U$ if it is \sph\ in $U\cap\pi$ for every $(q{+}1)$-dimensional complex affine plane $\pi\subset\cbb^N$.

\item By $\PSH_q(U)$ we denote the set of all \qpsh\ \fcts\ on $U$. If $q$ is an integer with $q\geq N$, we simply define $\PSH_q(U):=\USC(U)$.

\item Given a compact set $K$ in $\cbb^N$ the set $\PSH_q(K)$ denotes the set of all \fcts\ $v \in \USC(K)$ which have an \qpsh\ extension into an open \nbh\ of $K$, i.e., there exists an open \nbh\ $U$ of $K$ and a \fct\ $u \in \PSH_q(U)$ such that $u|K=v$.

\item We set $\acal\PSH_q^0(K):=\PSH_q(\inti{K}) \cap \ccal(K)$.

\eenum
\end{defn}

We give a overview of the basic properties of \qpsh\ \fcts\ although we might not use them explicitly in the following sections. We refer to \cite{Dieu}, \cite{HM}, \cite{Sl}, \cite{Sl2} and \cite{TPESZ} for details and further properties.

\begin{prop}\label{prop-qpsh} Let $U$ be an open set in $\cbb^N$.

\benum

\item The 0-\psh\ \fcts\ are the classical \psh\ \fcts.

\item $\PSH_0(U) \subset \PSH_1(U) \subset \ldots \subset \PSH_{N-1}(U) \subset \USC(U)$

\item Given $c\geq 0$ and two functions $u\in\PSH_q(U)$ and $v\in\PSH_r(U)$,
$$\begin{array}{ll}
cu\in\PSH_q(U),&\max\{u,v\}\in\PSH_{\max\{q,r\}}(U),\\
u+v\in\PSH_{q+r}(U),&\min\{u,v\}\in\PSH_{q+r+1}(U).
\end{array}$$

\item \label{qpsh-smooth} The $\ccal^2$-smooth function $u$ lies in $\PSH_q(U)$ if and only if its
complex Hessian $(u_{z_k \bar{z}_l})_{k,l=1\ldots,N}$ has at least $N{-}q$ non-negative eigenvalues at each point in $U$.

\item Let $u_j$ be $q_j$-\psh\ \fcts, $j=1,\ldots,k$. If $\chi:\rbb^k \to \rbb$ is a $\ccal^2$-smooth convex \fct\ which is non-decreasing in each variable, then the composition $\chi(u_1,\ldots,u_k)$ is a $\tilde{q}$-\psh\ \fct\ with $\tilde{q}=q_1+\ldots+q_k$.

\item \label{qpsh-hol} If $\psi \in \PSH_q(U)$, then $\psi \circ h \in \PSH_q(W)$ for every holomorphic mapping $h:W \to U$, where $W$ is an open set in $\cbb^n$.

\item \label{qpsh-decreasing} If $(u_n)_{n\in\nbb}$ is a decreasing sequence of functions in $\PSH_q(U)$, then the
limit $\D\lim_{n\to\infty}u_n$ lies in $\PSH_q(U)$.

\item \label{qpsh-supremum} Let $\{u_j\}_{j\in{J}}$ be a locally bounded family of \fcts\ in $\PSH_q(U)$. Then $u^\star=\left(\sup_j u_j\right)^\star$ lies in $\PSH_q(U)$. Here, $u^\star$ means the \textit{\usc\ regularization of $u$}, i.e., $u^\star(z):=\limsup_{\zeta \to z, \zeta \in U}u(\zeta)$ for $z \in U$.

\item \label{qpsh-patch} Let $V$ be an open subset of $U$. Let $v \in \PSH_q(V)$ and $u \in \PSH_q(U)$ such that $\limsup_{\zeta \to z, \zeta \in V}v(\zeta)\leq u(z)$ for every $z \in U\cap bV$. Then we have that
    $$
    \vphi=\left\{\begin{array}{ll}
    u & \mathrm{on} \ U\setminus V\\
    \max\{u,v\} & \mathrm{on} \ V
    \end{array}\right\}\in \PSH_q(U).
    $$

\item \label{qpsh-maxpr} Let $q{<}N$ and $U \relc \cbb^N$. Then every \fct\ $u\in\PSH_q(U)\cap \USC(\cl{U})$ satisfies the
maximum principle, i.e.,
$$
\max_{\overline{U}} u = \max_{bU} u.
$$
\eenum
\end{prop}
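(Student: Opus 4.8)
The plan is to reduce almost everything to the classical theory of subharmonic and plurisubharmonic functions by exploiting the defining slicing: $u\in\PSH_q(U)$ precisely when the restriction of $u$ to $U\cap\pi$ is subpluriharmonic for every $(q{+}1)$-dimensional complex affine plane $\pi$. With this in hand, item (1) is immediate, since for $q=0$ the planes $\pi$ are complex lines, on which pluriharmonic coincides with harmonic, so subpluriharmonicity along lines is exactly the classical definition of plurisubharmonicity. Items (7) and (8) likewise reduce slice-by-slice to classical statements: a decreasing limit of functions satisfying the pluriharmonic comparison on balls still satisfies it, and the upper-semicontinuous regularization of a locally bounded supremum is handled on each $(q{+}1)$-plane by the classical regularization theorem. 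Item (9), the patching principle, is proved directly from the comparison definition on each slice, exactly as in the plurisubharmonic case.

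The technical linchpin is the smooth characterization (4). I would first settle the extreme case $q=N{-}1$, where the only plane is $U$ itself: a $\ccal^2$ function is subpluriharmonic on an open subset of $\cbb^N$ if and only if its complex Hessian is not negative definite, i.e. has at least one non-negative eigenvalue at each point. One direction is a maximum-principle argument, ruling out an interior maximum of $u-h$ (for pluriharmonic $h$) by a second-order perturbation in the eigendirection of the non-negative eigenvalue; the converse observes that a negative-definite Hessian at a point lets one dominate $u$ strictly by the real part of a holomorphic quadratic on a small ball, breaking the comparison. This is essentially the Hunt–Murray/Słodkowski characterization, which I would cite. The general $q$ then follows from Cauchy eigenvalue interlacing: the complex Hessian has at least $N{-}q$ non-negative eigenvalues (equivalently at most $q$ negative ones) if and only if its compression to every $(q{+}1)$-dimensional complex subspace has at most $q$ negative eigenvalues, hence at least one non-negative eigenvalue, which by the extreme case is exactly subpluriharmonicity on every $(q{+}1)$-plane.

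With (4) in place, the remaining items become linear algebra at the level of Hessians for $\ccal^2$ functions, extended to general $u$ by convolution: the regularizations $u\ast\rho_\eps$ are smooth, $q$-plurisubharmonic, and decrease to $u$, so the smooth statements transfer via (7). Item (2) is then trivial, since at least $N{-}q$ non-negative eigenvalues forces at least $N{-}q{-}1$. For (3), $cu$ scales eigenvalues by $c\ge 0$, while Weyl's inequality (subadditivity of the count of negative eigenvalues) gives at most $q{+}r$ negative eigenvalues for $u+v$, hence $u+v\in\PSH_{q+r}$; the statements for $\max$ and $\min$ combine this eigenvalue count with the patching principle (9). For (5), the chain rule $\dd\dbar\,\chi(u_1,\dots,u_k)=\sum_i\chi_i\,\dd\dbar u_i+\sum_{i,j}\chi_{ij}\,\dd u_i\wedge\dbar u_j$ has a positive-semidefinite second term by convexity, while each $\chi_i\ge 0$ keeps the negative-eigenvalue count of $\chi_i\,\dd\dbar u_i$ at most $q_i$; summing gives $\tilde q=q_1+\dots+q_k$. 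Item (6) follows from $\dd\dbar(\psi\circ h)=h^\ast(\dd\dbar\psi)$ for holomorphic $h$, together with the fact that pulling a Hermitian form back by a complex-linear map does not increase its number of negative eigenvalues.

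Finally, item (10) reduces to the definition: for $q<N$ there is a $(q{+}1)$-dimensional plane through any interior point, and on such a slice $u$ is subpluriharmonic, hence an interior point of strict global maximum would violate the comparison with the constant (pluriharmonic) function equal to that maximum on a small ball in the slice. The main obstacle throughout is the non-smooth direction of (4) together with the fact that regularization preserves \qpshy; once the interlacing reduction to the subpluriharmonic base case is secured, the other items are either linear algebra or direct transcriptions of classical subharmonic arguments onto slices.
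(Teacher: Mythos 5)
Your proposal is more ambitious than the paper's own proof, which for the hard items simply cites the literature: the paper derives (1), (2), (7), (8) directly from the slicing definition, refers to \cite{Sl2} for $u+v\in\PSH_{q+r}(U)$ and $\min\{u,v\}\in\PSH_{q+r+1}(U)$, to \cite{HM} for (4), (9), (10), and to \cite{TPESZ} for (5) and (6). Unfortunately your central reduction device is broken: for $q\geq 1$ the convolutions $u\ast\rho_\eps$ of a \qpsh\ \fct\ are \emph{not} in general \qpsh, nor do they decrease to $u$. The standard proof that convolution preserves plurisubharmonicity works because a convolution is an integral of translates and the class $\PSH_0$ is closed under sums and integrals; but for $q\geq 1$ the sum of two \qpsh\ \fcts\ is in general only $2q$-\psh\ (this is part of the very item (3) you are proving), so averaging translates has no reason to stay in $\PSH_q(U)$. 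Indeed, the unavailability of decreasing smooth approximants is exactly why Chapter~8 of the paper must invoke the nontrivial approximation theorems of Bungart \cite{Bu} and S\l odkowski \cite{Sl2}, which produce only \emph{piecewise} smooth approximants (local maxima of smooth \qpsh\ \fcts) and only for \cont\ \qpsh\ \fcts. With this, your passage from the smooth Hessian calculus of (4) to general \usc\ \fcts\ collapses, and with it your proofs of (3), (5) and (6); note also that $\min\{u,v\}$ is not smooth even for smooth $u,v$, so eigenvalue counting does not apply to it directly, and $\min\{u,v\}\in\PSH_{q+r+1}(U)$ is one of S\l odkowski's genuinely hard theorems. (Item (2) is safe, since it follows from the slicing definition alone: a ball in a $(q{+}2)$-plane meets each $(q{+}1)$-subplane through a given point in a ball whose relative boundary lies in the original sphere.)

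Two further soft spots. In (8), the ambient regularization $u^\star$ restricted to a $(q{+}1)$-plane $\pi$ dominates, but need not equal, the regularization of $\sup_j u_j|_\pi$ computed inside $\pi$, so the ``classical regularization theorem on each slice'' does not directly show that $u^\star|_\pi$ is \sph. In (10), comparison with the \emph{constant} \ph\ \fct\ equal to the interior maximum yields no contradiction ($u\leq M$ on $bB$ and on $B$ is perfectly consistent with $u$ attaining the value $M$ inside); the maximum principle for \sph\ \fcts\ requires a non-constant \ph\ competitor, which is why the paper cites \cite{HM} for this item too. Your treatment of (1), (7), (9) is sound, and the Cauchy-interlacing organization of the smooth characterization (4) from the base case $q=N-1$ is correct and a nice way to present it.
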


\begin{proof} The properties (1) and (2) follow directly from the definition. Regarding (3), it is not hard to verify that $cu\in\PSH_q(U)$ and $\max\{u,v\}\in\PSH_{\max\{q,r\}}(U)$. The proofs of the properties $u+v\in\PSH_{q+r}(U)$ and $\min\{u,v\}\in\PSH_{q+r+1}(U)$ can be found in \cite{Sl2}. For the proofs of (4), (9) and (10) we refer to \cite{HM}. The proofs of (5) and (6) can be found in, e.g., \cite{TPESZ}. The properties (7) and (8) are easy to verify.
\end{proof}

A generalization of holomorphic functions is given by the so called \qhol\ \fcts\ which were already studied by, e.g., Basener in \cite{Ba} and \cite{Ba2} and Hunt and Murray in \cite{HM}.

\begin{defn}\label{def-qhol} Let $U$ be an open set in $\cbb^N$.

\benum

\item Given an integer $q\geq 0$, the set of \textit{$q$-holomorphic} functions on $U$ is defined by
$$
\ocal_q(U):=\{ f \in \ccal^2(U) : \dbar f \wedge (\dd\dbar f)^q=0\}.
$$

\item Let $K$ be a compact set in $\cbb^N$. The set $\ocal_q(K)$ denotes the set of all \cont\ \fcts\ $f$ on $K$ which have a \qhol\ extension into an open \nbh\ of $K$, i.e., there exist an open \nbh\ $U$ of $K$ and a \fct\ $F \in \ocal_q(U)$ such that $F|K=f$.

\item We define $A_q(K):=\ocal_q(\inti{K}) \cap \ccal(K)$.

\eenum
\end{defn}

The next proposition is a collection of properties of \qhol\ \fcts.

\begin{prop}\label{prop-qhol}  Let again $U$ be an open set in $\cbb^N$.
\benum

\item The $0$-holomorphic functions are the usual holomorphic functions.

\item If $q\geq N$, then $\ocal_q(U)=\ccal^2(U)$.

\item $\ocal(U)\subset \ocal_1(U)\subset \ldots \subset \ocal_{N-1}(U) \subset \ocal_N(U)=\ccal^2(U)$

\item A function $f\in\ccal^2(U)$ lies in $\ocal_q(U)$ if and only if
$$
\mathrm{rank}\left(\begin{matrix}
f_{\bar{z}_1} & \cdots & f_{\bar{z}_N}\\
f_{z_1\bar{z}_1} & \cdots & f_{z_1\bar{z}_N}\\
\vdots&\ddots&\vdots\\
f_{z_N\bar{z}_1}&\cdots&f_{z_N\bar{z}_N}
\end{matrix}\right)\leq{q}\quad\mathrm{on} \ U.
$$

\item If $f \in \ocal_q(U)$, $g \in \ocal_r(U)$, $\lambda \in \cbb$ and $m\in \nbb$, then
$$
f^m, \lambda f \in \ocal_q(U) \qand fg, f+g \in \ocal_{q+r}(U).
$$

\item If $W$ is an open set in $\cbb^n$, $f \in \ocal_q(W)$ and $h:U \nach W$ a holomorphic mapping, then $f\circ h \in \ocal_q(U)$.

\item If $f\in\ocal_q(U)$ and $h$ is a complex valued
    holomorphic function defined in the \nbh\ of the image of $f$, then $h \circ f \in \ocal_q(U)$.

\item \label{loc-max-qhol} If $q<N$, then every \fct\ $f\in \ocal_q(U)$ admits \textit{the local maximum modulus principle}, i.e., for every relatively compact set $D\relc U$ we have that
$$
\max_{\cl{D}}|f|=\max_{bD} |f|.
$$

\item If $f \in \ocal_q(U)$, then $\Re f$ and $\log|f|$ lie in $\PSH_q(U)$.

\eenum

\end{prop}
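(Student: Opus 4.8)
The plan is to deduce both assertions from the smooth eigenvalue criterion in part~(\ref{qpsh-smooth}) of Proposition~\ref{prop-qpsh} together with the rank criterion in part~(4) of Proposition~\ref{prop-qhol}. Since $\ocal_q(U)\subset\ccal^2(U)$, the function $\Re f$ is of class $\ccal^2$; I write $H=(f_{z_k\bar z_l})_{k,l}$ for the complex Hessian of $f$. Using $\partial_{z_k}\partial_{\bar z_l}\bar f=\overline{f_{z_l\bar z_k}}$ one checks that the complex Hessian of $\Re f=\tfrac12(f+\bar f)$ is the Hermitian matrix $L:=\tfrac12(H+H^{\ast})$, where $H^{\ast}$ denotes the conjugate transpose of $H$. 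By part~(\ref{qpsh-smooth}) of Proposition~\ref{prop-qpsh} it then suffices to show that $L$ has at least $N-q$ non-negative eigenvalues at each point of $U$.

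The heart of the matter is a linear-algebra estimate. Part~(4) of Proposition~\ref{prop-qhol} says the augmented matrix has rank at most $q$; deleting its first row gives $\mathrm{rank}\,H\leq q$, so $\dim\ker H\geq N-q$. For every $v\in\ker H$ we have $v^{\ast}Hv=0$ and $v^{\ast}H^{\ast}v=\overline{v^{\ast}Hv}=0$, hence $v^{\ast}Lv=0$; by polarization the Hermitian form attached to $L$ then vanishes on all of $\ker H$, i.e. $\ker H$ is totally isotropic for $L$. Since any totally isotropic subspace has dimension at most $n_{0}+\min\{n_{+},n_{-}\}$, where $n_{+},n_{0},n_{-}$ are the numbers of positive, zero and negative eigenvalues of $L$, we obtain
$$
N-q\leq\dim\ker H\leq n_{0}+\min\{n_{+},n_{-}\}\leq n_{0}+n_{+}.
$$
Thus $L$ has at least $N-q$ non-negative eigenvalues and $\Re f\in\PSH_q(U)$.

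To handle $\log|f|$ I would first work off the zero set and then glue. On $V_{0}:=\{f\neq0\}$ the function $\log|f|$ is locally of the form $\Re(h\circ f)$: near a point $p\in V_{0}$ choose a holomorphic branch $h$ of the logarithm defined near $f(p)\in\cbb\setminus\{0\}$; then $h\circ f\in\ocal_q$ by part~(7) of Proposition~\ref{prop-qhol}, and $\log|f|=\Re(h\circ f)$ is $q$-plurisubharmonic by the first part. Since $q$-plurisubharmonicity is local, $\log|f|\in\PSH_q(V_{0})$. To cross the zeros, fix $t>0$ and set $V:=\{z\in U:|f(z)|>t\}\subset V_{0}$. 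Then $\log|f|\in\PSH_q(V)$, the constant $\log t$ lies in $\PSH_0(U)\subset\PSH_q(U)$, and for every $z\in U\cap bV$ we have $|f(z)|=t$, whence $\limsup_{\zeta\to z,\,\zeta\in V}\log|f(\zeta)|=\log t$. Part~(\ref{qpsh-patch}) of Proposition~\ref{prop-qpsh} now gives
$$
u_{t}:=\max\{\log|f|,\log t\}\in\PSH_q(U).
$$
Letting $t\downarrow0$, the functions $u_{t}$ decrease to $\log|f|$, so $\log|f|\in\PSH_q(U)$ by part~(\ref{qpsh-decreasing}) of Proposition~\ref{prop-qpsh}.

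The only genuinely delicate step is the eigenvalue count for $\Re f$: one must pass from a rank bound on the non-Hermitian Hessian $H$ to a lower bound on the number of non-negative eigenvalues of its Hermitian part $L$, and this is exactly what the totally-isotropic-subspace (Witt index) estimate supplies. The local logarithm, the gluing lemma and the monotone limit are then routine once $\Re f$ has been treated.
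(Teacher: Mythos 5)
Your proof is correct, but there is nothing in the paper to compare it with: for part (9) the paper offers no argument at all, merely the remark that ``(9) has been proven in \cite{HM}.'' What you have written is therefore a self-contained replacement for that citation, assembled entirely from the other items of Propositions \ref{prop-qpsh} and \ref{prop-qhol}. The key step --- passing from the rank bound $\mathrm{rank}\,H\le q$ on the non-Hermitian matrix $H=(f_{z_k\bar z_l})$ (obtained by deleting the first row in the criterion of part (4)) to the statement that the Hermitian part $L=\tfrac12(H+H^*)$, which is indeed the complex Hessian of $\Re f$, has at least $N-q$ non-negative eigenvalues --- is handled correctly: $\ker H$ is totally isotropic for $L$ by polarization, and the inertia bound $\dim W\le n_0+\min\{n_+,n_-\}$ for totally isotropic subspaces gives $n_0+n_+\ge N-q$, which is exactly what Proposition \ref{prop-qpsh} (\ref{qpsh-smooth}) requires. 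The reduction of $\log|f|$ to $\Re(h\circ f)$ via local branches of the logarithm, followed by the gluing lemma (Proposition \ref{prop-qpsh} (\ref{qpsh-patch})) and the decreasing limit (\ref{qpsh-decreasing}), is also sound, and you correctly identify the glued function as $\max\{\log|f|,\log t\}$ on all of $U$. The one ingredient you use that is neither proved nor listed in the paper is the locality of $q$-plurisubharmonicity, needed to conclude $\log|f|\in\PSH_q(\{f\ne0\})$ from the local representations; this is standard and can be found in \cite{HM} and \cite{Sl}, but it deserves an explicit reference. What your route buys is independence from the external reference, at the modest cost of one piece of linear algebra (the Witt-index estimate) and that locality fact.
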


\begin{proof} The statements (1), (2) and (3) follow from definition. The proofs of (4), (5), (6), (7) and (8) can be found in \cite{Ba}. (9) has been proven in \cite{HM}.

\end{proof}

We give some examples of \qhol\ \fcts, which can be also found in \cite{Ba}.

\begin{ex} \label{ex-qhol}

\startenum

\new Every \ph\ or anti-holomorphic \fct\ is 1-holo\-morphic.

\new If $V$ is a complex submanifold of $U$, then every restriction of a \fct\ $f\in\ocal_q(U)$ to $V$ lies in $\ocal_q(V)$ because the inclusion mapping $i:V\hookrightarrow U$ is holomorphic.

\new \label{ex-qhol-locq} If there are local coordinates $z_1,\ldots,z_N$ such that a given $\ccal^2$-smooth \fct\ depends holomorphically in $N{-}q$ variables $z_1,\ldots,z_{N-q}$, then $g$ is \qhol.

\new \label{ex-qhol-char} Let $h=(h_1,\ldots,h_q)$ be a holomorphic mapping from $U$ into $\cbb^q$ and $V:=\{ z \in U : h(z)=0\}$. Then
$$
\chi_{m,V}=\frac{1}{1+m(h_1^2 + \ldots + h_q^2)}
$$
lies in $\ocal_q(U)$ for every $m\in\nbb$ due to the previous property (3). The sequence $(\chi_{m,V})_{m\in \nbb}$ decreases to the characteristic function $\chi_V$ of $V$ in $U$.

\new Let $L$ be an affine complex plane in $\cbb^N$ of codimension $q\in \{ 0,\ldots,N-1\}$ and let $U$ be an open set in $\cbb^N$. Consider a \fct\ $h\in \ccal^2(U)$ which is holomorphic on $U\cap L'$ for every parallel copy $L'$ of the plane $L$. Then by property (3) above the \fct\ $h$ lies in $\ocal_q(U)$.
\end{ex}

The last example leads to another subfamily of \qhol\ \fcts\ which will serve later for the characterization of the Shilov boundary of bounded convex domains.

\begin{defn}\label{defn-hol-linear} Let $U$ be an open set and $K$ be a compact set in $\cbb^N$. Let $L$ be an affine complex plane of codimension $q \in \{ 0,\ldots, N-1\}$.

\benum

\item We denote by $\ocal(L,U)$ the set of \fcts\ described in the last example, part (5).

\item The class $\ocal_q^\pi(U)$ is the union of all sets $\ocal(L,U)$, where $L$ varies among all affine complex planes in $\cbb^N$ of codimension $q$.

\item The set $\ocal(L,K)$ will mean the set of all \cont\ \fcts\ $f$ on $K$ such that there exist a \nbh\ $U$ of $K$ and a \fct\ $F \in \ocal(L,U)$ with $F|K=f$. The class $\ocal^\pi_q(K)$ is then the union of all sets $\ocal(L,K)$, where $L$ again varies among all affine complex planes of codimension $q$.

\item We set $A(L,K):=\ocal(L,\inti{K}) \cap \ccal(K)$ and $A^\pi_q(K):=\ocal^\pi_q(\inti{K}) \cap \ccal(K)$.
\eenum

\end{defn}

We have the following properties for this new class of \fcts.

\begin{prop} Let $U, K ,L$ and $q$ be as in the previous Definition \ref{defn-hol-linear}. Then we have the following properties. \startenum

\benum
\item $\ocal_q^\pi(U) \subset \ocal_q(U)$

\item $\ocal^\pi_0(U)=\ocal_0(U) \subset \ocal^\pi_1(U) \subset \ldots \subset \ocal^\pi_{N-1}(U) \subset \ccal^2(U)$

\item The family $A(L,K)$ and the uniform closure of $\ocal(L,K)$ are uniform subalgebras of $\ccal(K)$.
\eenum

\end{prop}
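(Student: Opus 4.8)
The plan is to treat the three claims in turn, the first being immediate, the second elementary once the right subplane is chosen, and the third carrying the only genuine content.

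For (1), I would simply unwind the definitions. By Definition \ref{defn-hol-linear}, $\ocal_q^\pi(U)$ is the union of the sets $\ocal(L,U)$ as $L$ ranges over the affine complex planes of codimension $q$. Example \ref{ex-qhol}(5) already records that every $h \in \ocal(L,U)$ lies in $\ocal_q(U)$, since holomorphy of $h$ along the $N-q$ directions of $L$ forces the rank criterion of Proposition \ref{prop-qhol}(4). Taking the union over all such $L$ gives $\ocal_q^\pi(U) \subset \ocal_q(U)$ at once.

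For (2), the identity $\ocal_0^\pi(U) = \ocal_0(U)$ holds because the only affine plane of codimension $0$ is $\cbb^N$ itself, whose only parallel copy is $\cbb^N$; thus $\ocal(\cbb^N,U)$ is exactly the set of $\ccal^2$-\fcts\ holomorphic on $U$, i.e. $\ocal_0(U)$ by Proposition \ref{prop-qhol}(1). For the successive inclusions I would fix $h \in \ocal(L,U)$ with $L$ of codimension $q \leq N-2$ and choose a complex affine subplane $L' \subset L$ of codimension $q+1$, i.e. of dimension one less. Every parallel copy $L'+v$ of $L'$ then lies inside the parallel copy $L+v$ of $L$; since $h$ is holomorphic on $U \cap (L+v)$ and the restriction of a holomorphic \fct\ to a complex submanifold is holomorphic, $h$ is holomorphic on $U \cap (L'+v)$ as well. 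Hence $h \in \ocal(L',U) \subset \ocal_{q+1}^\pi(U)$, and taking the union over $L$ yields $\ocal_q^\pi(U) \subset \ocal_{q+1}^\pi(U)$. The final inclusion $\ocal_{N-1}^\pi(U) \subset \ccal^2(U)$ is trivial, as every \fct\ in any $\ocal(L,U)$ is by definition $\ccal^2$.

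For (3), the key observation is that, although $\ocal_q(U)$ itself fails to be closed under products (Proposition \ref{prop-qhol}(5) only gives $fg \in \ocal_{q+r}(U)$), holding a single plane $L$ fixed repairs this. If $f,g$ are $\ccal^2$ on $U$ and holomorphic on each parallel copy $L'$ of $L$, then $\lambda f$, $f+g$ and $fg$ are again $\ccal^2$ and holomorphic on each $L'$, because the holomorphic \fcts\ on the fixed slices $U \cap L'$ form an algebra and the $\ccal^2$ property is preserved by the ring operations. Thus $\ocal(L,U)$ is a subalgebra of $\ccal^2(U)$ containing the constants. Since $\ccal(K)$ is itself an algebra, $A(L,K) = \ocal(L,\inti{K}) \cap \ccal(K)$ is a subalgebra of $\ccal(K)$ containing the constants, and $\ocal(L,K)$ is a subalgebra as well (sums and products of \fcts\ admitting $\ocal(L,\cdot)$-extensions admit such extensions on the intersection of the two \nbhs). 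Both separate the points of $K$, since every holomorphic polynomial, being holomorphic on all of $\cbb^N$, belongs to them. Finally, the uniform closure of $\ocal(L,K)$ is again a subalgebra by the standard fact that the sup-norm closure of a subalgebra of the Banach algebra $\ccal(K)$ is a subalgebra, so it is a genuine uniform subalgebra of $\ccal(K)$.

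The main obstacle is precisely the multiplicative closure in (3): one must recognize that the index-raising in Proposition \ref{prop-qhol}(5) is an artifact of varying the plane, and disappears once $L$ is held fixed, so that the defining condition reduces to holomorphy on a fixed family of slices, manifestly stable under the ring operations. Some care is also needed with regularity, since the transverse $\ccal^2$-smoothness is not preserved under uniform limits; this is exactly why the closed object one obtains is the uniform closure of $\ocal(L,K)$ rather than $\ocal(L,K)$ itself.
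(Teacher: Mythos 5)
Your proof is correct and follows essentially the same route as the paper, whose own proof consists of three one-line assertions (part (1) from Example \ref{ex-qhol}\,(5), part (2) ``directly from the definition'', part (3) from the fact that uniform limits of holomorphic functions remain holomorphic); you simply supply the details -- the passage to a subplane $L'\subset L$ for the chain of inclusions, and the verification of the algebra axioms -- that the paper leaves to the reader. Your closing observation that fixing the plane $L$ removes the index-raising in the product rule of Proposition \ref{prop-qhol}\,(5), so that $\ocal(L,U)$ genuinely is an algebra, is precisely the point the paper's one-sentence justification of (3) glosses over.
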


\bproof Property (1) follows from Example \ref{ex-qhol} (5). The  inclusions in point (2) follow directly from the definition. The statement in point (3) is easy to verify, since the uniform limit of a sequence of holomorphic \fcts\ remains holomorphic.
\eproof

\section{Shilov boundary for \qpsh\ \fcts}

We first prove the existence of the Shilov boundary for the subclasses of \qpsh\ \fcts\ defined in the previous section.

\begin{prop} \label{prop-qshilov} Let $U \subset\cbb^N$ be open, $K\subset \cbb^N$ be compact and $L$ be a complex plane in $\cbb^N$ of codimension $q \in \{ 0,\ldots,N-1\}$. Then we have the following properties.

\begin{enumerate}

\item Recall that $\log|\bcal|=\{ \log|f| : f \in \bcal\}$ for a subfamily $\bcal$ of complex-valued \cont\ \fcts. Then
    $$\log|\ocal_0(U)|\subset \log|\ocal(L,U)| \subset \log|\ocal_q^\pi(U)| \subset \log|\ocal_q(U)| \subset \PSH_q(U).$$

\item For the respecting Shilov boundaries we have that
$$
\check{S}_{\ocal_0(K)} \subset \check{S}_{\ocal_q(L,K)} \subset \check{S}_{\ocal_q^\pi(K)}\subset \check{S}_{\ocal_q(K)}\subset\check{S}_{\PSH_q(K)} \subset bK
$$
$$
\mathrm{and} \quad \check{S}_{A_0(K)} \subset \check{S}_{A_q(L,K)} \subset \check{S}_{A_q^\pi(K)}\subset \check{S}_{A_q(K)}\subset\check{S}_{\acal\PSH^0_q(K)} \subset bK.
$$

\item Let $\D \bcal \in \{ \ocal_0(K), \ \ocal(L,K), \ \ocal_q^\pi(K), \ \ocal_q(K), \ \PSH_q(K)\}$. Then
$$
\cl{P}_{\cl{\bcal}}=\check{S}_{\bcal} \in b_{\bcal} \qand \cl{P}_{\PSH_q(K)}=\check{S}_{\PSH_q(K)} \in b_{\PSH_q(K)}
$$

\item Let $\D \bcal \in \{ A_0(K), \ A(L,K), \ A_q^\pi(K), \ A_q(K), \ \acal\PSH^0_q(K)\}$. Then
$$
\cl{P}_{\bcal}=\check{S}_{\bcal} \in b_{\bcal}.
$$

\end{enumerate}

\end{prop}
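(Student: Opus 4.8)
The plan is to reduce all four parts to the monotonicity and closure machinery of the first three chapters together with the two Bishop-type theorems, treating the \qpsh\ classes, the subalgebra classes, and the intermediate \qhol\ classes $\ocal_q(K),A_q(K)$ by three different mechanisms. \textbf{Parts (1) and (2).} Part (1) is almost definitional: a holomorphic function is holomorphic on every slice $U\cap L'$, giving $\ocal_0(U)\subset\ocal(L,U)$; the next two inclusions are the definition of $\ocal^\pi_q$ and Example \ref{ex-qhol}(5); and $\log|f|\in\PSH_q(U)$ for $f\in\ocal_q(U)$ is Proposition \ref{prop-qhol}(9). Applying $\log|\cdot|$ throughout yields the chain. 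For (2) I would pass to the compact setting and use the monotonicity of the Shilov boundary, Proposition \ref{prop-ashilov}(6). The inclusions into $bK$ come from the maximum principle, Proposition \ref{prop-qpsh}(10): since $q<N$ and $\inti K\Subset\cbb^N$, every competitor attains its maximum on $b(\inti K)\subset bK$, so $bK$ is a closed boundary and hence contains $\check S$. The only non-formal point is $\check S_{A_q(K)}\subset\check S_{\acal\PSH^0_q(K)}$, where $\log|f|$ need not be continuous; here I note that $\max\{\log|f|,-n\}$ is continuous, lies in $\acal\PSH^0_q(K)$ by Proposition \ref{prop-qpsh}(3), and decreases to $\log|f|$, so $\log|A_q(K)|\subset\scl{\acal\PSH^0_q(K)}$ and Corollary \ref{cor-shilov-cl} applies.

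\textbf{The \qpsh\ classes.} For $\bcal=\PSH_q(K)$ and $\bcal=\acal\PSH^0_q(K)$ I would invoke Theorem \ref{bishop-sa}. As peaking family $\Theta$ take smooth radial bumps $\vartheta(z)=\chi(|z-x|^2)$ supported in a ball of radius below $\dist(x,S)$, so that $S(\vartheta)=\{x\}$ and $\vartheta\equiv 0$ on $S$; as distinguished function take $\omega=|z|^2|_K$, which is strictly \psh\ and hence lies in every $\PSH_q$. The two hypotheses $\omega\in\acal[\Theta]$ and $\acal+\{\eps\omega\}\subset\acal[\Theta]$ reduce to the stability of $q$-\psh ness under a small $\ccal^2$-perturbation once a positive multiple of $|z|^2$ has been added; on the smooth level this is exactly the eigenvalue criterion of Proposition \ref{prop-qpsh}(4), and in general it is the standard fact that $f+\eps|z|^2$ is strictly $q$-\psh\ and stays $q$-\psh\ under small smooth perturbations. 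Theorem \ref{bishop-sa} then gives $\check S_\bcal=\cl P_\bcal\in b_\bcal$, which is the $\PSH_q(K)$-assertion of (3) and the $\acal\PSH^0_q(K)$-assertion of (4).

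\textbf{The subalgebra classes.} The families $A_0(K)$ and $A(L,K)$ are separating uniform subalgebras of $\ccal(K)$ (coordinate functions and the transverse linear coordinates, which are constant on each slice $L'$, separate points; the algebra and closedness properties are the preceding proposition on the linear classes), so Corollary \ref{bishop-unhol} gives $\cl P_\bcal=\check S_\bcal\in b_\bcal$ directly, and $A^\pi_q(K)=\bigcup_L A(L,K)$ is covered by the same corollary applied to the union. For the extendable classes $\ocal_0(K),\ocal(L,K),\ocal^\pi_q(K)$ of part (3) I would pass to uniform closures: by Remark \ref{rem-bshilov} the Shilov boundary is unchanged, $\overline{\ocal(L,K)}$ is a uniform subalgebra, and the union case is assembled from the per-$L$ results through Proposition \ref{prop-ashilov}(7) and Proposition \ref{prop-pms}(5); a boundary for the smaller class then follows from Proposition \ref{prop-ashilov}(6).

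\textbf{The main obstacle: $\ocal_q(K)$ and $A_q(K)$.} These are the only classes that are neither (unions of) uniform subalgebras --- products raise the index, $fg\in\ocal_{q+r}$ by Proposition \ref{prop-qhol}(5) --- nor reachable by Theorem \ref{bishop-sa}, since passing to $\log|\cdot|$ turns products into sums and $\log|A_q(K)|$ is only a scalar cone, not stable under the additive perturbations that theorem needs. My plan is to exploit the sandwich from (2), $\check S_{A^\pi_q(K)}\subset\check S_{A_q(K)}\subset\check S_{\acal\PSH^0_q(K)}$, together with $\log|A_q(K)|\subset\scl{\acal\PSH^0_q(K)}$: the easy direction $P_{A_q(K)}\subset P_{\acal\PSH^0_q(K)}$ holds because $\max\{\log|f|,-n\}$ peaks wherever $\log|f|$ does. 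The crux --- and the step I expect to be hardest --- is the reverse, that $\cl P_{A_q(K)}$ is itself an $A_q(K)$-boundary; this seems to require genuinely constructing \qhol\ peak functions at the Shilov points (the mechanism behind the later foliation results) rather than a soft reduction. Once $\cl P_{A_q(K)}\in b_{A_q(K)}$ is established, Proposition \ref{prop-pms}(1)--(3) finishes the argument, and $\ocal_q(K)$ follows by passing to its uniform closure via Remark \ref{rem-bshilov}.
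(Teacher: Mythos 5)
Most of your reduction matches the paper: parts (1) and (2) are handled exactly as in the text (including the $\max\{\log|f|,-n\}$ device for $\check{S}_{A_q(K)}\subset\check{S}_{\acal\PSH^0_q(K)}$), the classes $\PSH_q(K)$ and $\acal\PSH^0_q(K)$ are treated via Theorem \ref{bishop-sa} with $\omega(z)=1+|z|^2$ (take the shifted function rather than $|z|^2$, which need not be positive on $K$), and the classes $\ocal_0(K)$, $\ocal(L,K)$, $\ocal_q^\pi(K)$ and their $A$-counterparts go through Bishop's theorem, Corollary \ref{bishop-unhol} and uniform closures, as you describe.

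However, for $\ocal_q(K)$ and $A_q(K)$ --- the very classes you single out as ``the main obstacle'' --- your proposal contains no proof: you reduce the problem to showing that $\cl{P}_{A_q(K)}$ is an $A_q(K)$-boundary and then state that this ``seems to require genuinely constructing \qhol\ peak functions,'' which you do not do. This is a genuine gap, and moreover the premise behind it is mistaken. You argue that these classes are not unions of uniform subalgebras ``because products raise the index, $fg\in\ocal_{q+r}$,'' but Proposition \ref{prop-qhol}(5) also gives $f^m\in\ocal_q(U)$ for a \emph{single} $f\in\ocal_q(U)$, and $fg\in\ocal_{q+0}=\ocal_q(U)$, $f+g\in\ocal_q(U)$ whenever $g\in\ocal_0(U)$. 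Hence for each fixed $f\in\ocal_q(K)$ the uniform algebra $\bcal_f$ generated by $f$ and $\ocal_0(K)$ is a separating uniform subalgebra of $\ccal(K)$ contained in $\cl{\ocal_q(K)}$, so that
$$
\ocal_q(K)\ \subset\ \mcal:=\bigcup_{f\in\ocal_q(K)}\bcal_f\ \subset\ \cl{\ocal_q(K)},
$$
all three families have the same boundaries and Shilov boundary by Remark \ref{rem-bshilov} and Proposition \ref{prop-ashilov}(6), and Corollary \ref{bishop-unhol} applied to the union $\mcal$ yields $\check{S}_{\ocal_q(K)}=\check{S}_\mcal=\cl{P}_\mcal\subset\cl{P}_{\cl{\ocal_q(K)}}\subset\check{S}_{\cl{\ocal_q(K)}}=\check{S}_{\ocal_q(K)}$. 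The same single-generator trick with $A_f=\cl{\langle f,A_0(K)\rangle}$ disposes of $A_q(K)$. In short, the hard case is resolved by a soft union-of-uniform-algebras argument already available from your own toolkit, not by constructing \qhol\ peak functions; without this (or an actual construction of peak functions) your proof of parts (3) and (4) is incomplete for $\ocal_q(K)$ and $A_q(K)$.
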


\begin{proof} \startenum \new This follows from Example \ref{ex-qhol} (5) and Proposition \ref{prop-qhol} (9).

\new This is a consequence of part (1), Proposition \ref{prop-ashilov} (6), Remark \ref{rem-bshilov}, Proposition \ref{prop-qpsh} \eqref{qpsh-maxpr} and Corollary \ref{cor-shilov-cl} together with the following fact: If $f$ is a \fct\ from $A_q(K)$, then the \fcts\ $\psi_n:=\max\{ \log|f|,-n\}$, $n \in \nbb$, define a sequence of \fcts\ from $\acal\PSH_q(K)$ decreasing to $\log|f|$. Thus, $\log|A_q(K)|$ lies in $\scl{\acal\PSH_q(K)}$. Thus, $\cs_{A_q(K)}$ is contained in $\cs_{\acal\PSH_q(K)}$.

\new Let $\acal\in \{ \log|\bcal|,\PSH_q(K)\}$. It is obvious that $\acal_0:=\log|\ocal_0(K)|$ generates the topology of $K$. By Proposition \ref{prop-qhol} (5), (9) and Proposition \ref{prop-qpsh} (3) we deduce that the set $\acal$ is a scalar cone such that $\log|\ocal_0(K)| + \acal \subset \acal$. Then it follows from Theorem \ref{thm-shilov-bound} that $\SA$ is an $\acal$-boundary, so $\check{S}_{\acal} \in b_{\acal}$.

If $\bcal=\ocal_0(K)$ or $\bcal=\ocal(L,K)$, we can directly apply Bishop's Theorem to the Banach subalgebras $\cl{\bcal}$ in order to obtain $\cl{P}_{\cl{\bcal}}=\check{S}_{\cl{\bcal}}=\check{S}_{\bcal}$.

Let $\lbb_q$ be the set of all complex planes of codimension $q$ in $\cbb^N$. By Proposition \ref{prop-ashilov} (6), Bishop's Theorem and Proposition \ref{prop-pms} (4) we conclude that
$$
\check{S}_{\ocal_q^\pi(K)} = \cl{ \bigcup_{L' \in \lbb_q} \check{S}_{\ocal(L',K)}}=\cl{\bigcup_{L' \in \lbb_q} P_{\cl{\ocal(L',K)}}} \subset \cl{P}_{\bigcup_{L' \in \lbb_q}\cl{\ocal(L',K)}} \subset \cl{P}_{\cl{\ocal_q^\pi(K)}} \subset \check{S}_{\ocal_q^\pi(K)}.
$$
Given a \fct\ $f \in \ocal_q(K)$ let $\bcal_f$ be the uniform algebra in $\ccal(K)$ generated by $f$ and $\ocal_0(K)$. It follows from Proposition \ref{prop-qhol} (5) that this is really an algebra. We set $\mcal:=\bigcup_{f \in \ocal_q(K)} \bcal_f$. Then we have the inclusions $\ocal_q(K) \subset \mcal \subset \cl{\ocal_q(K)}$ and, thus, $\check{S}_\mcal=\check{S}_{\ocal_q(K)}=\check{S}_{\cl{\ocal_q(K)}}$. Now by Proposition \ref{bishop-unhol} we obtain that
$$
\check{S}_{\ocal_q(K)} = \check{S}_\mcal = \cl{P}_\mcal \subset \cl{P}_{\cl{\ocal_q(K)}} \subset \check{S}_{\cl{\ocal_q(K)}}=\check{S}_{\ocal_q(K)}.
$$
Hence, for $\bcal \in \{ \ocal_q^\pi(K), \ocal_q(K)\}$ we have that $\cl{P}_{\cl{\bcal}}=\check{S}_\bcal$.

Let $\acal=\PSH_q(K)$ and let $\Theta$ be the set of all non-negative $\ccal^2$-smooth \fcts\ on $\cbb^N$ with compact support. Then $\acal[\Theta]$ forms the so-called \textit{\sqpsh\ \fcts} \ on $K$. To see our final identity $\cl{P}_{\acal} = \check{S}_{\acal}$ we just have to apply Theorem \ref{bishop-sa} to the set $\acal$ and to the \fct\ $\omega(z):=1+|z|^2$, $z \in \cbb^N$.

\new By setting again $\acal_0:=\log|\ocal_0(K)|$ and by the same reasons as in the first part of the previous point (3), we can deduce that the Shilov boundary for the class $\bcal$ exists.

Since the families $A_0(K)$ and $A(L,K)$ are uniform subalgebras of $\ccal(K)$ we can apply Bishop's thereom in order to obtain the corresponding peak point property.

The family $A^\pi_q(K)$ is the union of uniform algebras of the form $A(L',K)$, where $L'$ varies among all complex planes $L'$ of codimension $q$. For given $f \in A_q(K)$ the family $A_f$ denotes the uniform closure of the algebra generated by $f$ and $A_0(K)$. The family $A_q(K)$ is then exactly the union of all such families $A_f$, where $f \in A_q(K)$. Then by the same arguments as in the middle part of the previous point, we obtain peak point properties for the families $A^\pi(K)$ and $A_q(K)$.

The last peak point property for the class $\acal\PSH^0_q(K)$ is again due to Theorem \ref{bishop-sa} by using $\omega(z):=1+|z|^2$, $z \in \cbb^N$.

\end{proof}

For certain subfamilies of \qpsh\ or \qhol\ \fcts\ we are already able to classify the Shilov boundary.

\brem \label{rem-trivial-qshilov} (1) Let $K$ be a compact set in $\cbb^N$. Then the Shilov boundary for the family
$$
\acal := \PSH_q(\inti K) \cap \USC(K)
$$
of all \qpsh\ \fcts\ on the interior of $K$ which are \usc\ up to the boundary $K$ is exactly the whole boundary of $K$. Indeed, by the maximum principle (Proposition \ref{prop-qpsh} \eqref{qpsh-maxpr}), the Shilov boundary for $\acal$ is contained in the boundary of $K$. On the other hand, pick a point $x$ in the boundary of $K$. Then the characteristic \fct\ $\chi_{\{x\}}$ of the set $\{x\}$ in $K$ lies in $\acal$. Moreover, it peaks at $x$. Hence, we have that the whole boundary of $K$ is the set $P_\acal$. Since this set lies in the Shilov boundary for $\acal$, we conclude that $\SA=bK$.

(2) The following \fct\ $f$ from Example 5 in \cite{Ba} is $(N-1)$-holomorphic on $\cbb^N\setminus\{0\}$ and has an isolated non-removable singularity at the origin,
$$
f(z)=\frac{\bar{z}_1+ \ldots + \bar{z}_N}{|z_1|^2+\ldots+|z_N|^2}.
$$
Let $p$ be a boundary point of a compact set $K$ in $\cbb^N$ and let $(p_n)_{n \in \nbb}$ be a sequence of points $p_n \notin K$ which converges to $p$ outside $K$. For $n \in \nbb$ consider the \fct\ $f_n(z):=f(z-p_n)$, which is $(N-1)$-holomorphic on $\cbb^N\setminus\{p_n\}$. Now if $n$ tends to $+\infty$, the absolute values $|f_n(p)|$ tend to $+\infty$. Hence, for every small enough \nbh\ $U$ of $p$ there is an index $n \in \nbb$ such that $U$ contains $p_n$ and $|f_n|$ attains its maximum on $K$ only inside the set $U\cap K$. By the definition of the Shilov boundary, the set $U$ intersects $\cs_{\ocal_{N-1}(K)}$. Since $U$ was an arbitrary small \nbh\ of $p$, the point $p$ itself is contained in $\cs_{\ocal_{N-1}(K)}$. Therefore, the whole boundary $bK$ of $K$ is contained $\cs_{\ocal_{N-1}(K)}$. Now the local maximum modulus principle in Proposition \ref{prop-qhol} \eqref{loc-max-qhol} yields
$$
\cs_{\ocal_{N-1}(K)} = bK.
$$

\erem

In the next statement we compare the Shilov boundary for subclasses of \qhol\ \fcts\ defined on subspaces of different dimensions. Some ideas of its proof are similar to the arguments given in the proof of Theorem 3 in \cite{Ba2}.

\begin{prop} \label{prop-lower-shilov}

 Let $K$ be a compact set in $\cbb^N$ which admits a Stein \nbh\ basis. Given a complex plane $L$ of codimension $q\in\{ 0,\ldots,N-1\}$ it holds that
$$
\check{S}_{\ocal(K \cap L)} = \check{S}_{\ocal(L,K)} \cap L.
$$
\end{prop}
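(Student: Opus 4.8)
Throughout I fix affine coordinates $(w,v)\in\cbb^{N-q}\times\cbb^{q}$ so that $L=\{v=c_0\}$ for a fixed $c_0$, and write $\pi(w,v)=v$ for the projection onto the transverse factor. For $c\in\cbb^{q}$ put $S_c:=K\cap\pi^{-1}(c)$, so that $M:=K\cap L=S_{c_0}$ and $\ocal(M)=\ocal(K\cap L)$. Every $F\in\ocal(L,K)$ restricts on each slice to a holomorphic function $F|_{S_c}\in\ocal(S_c)$; in particular restriction to $M$ gives a map $r:\ocal(L,K)\to\ocal(M)$. The first point of the plan is that $r$ is \emph{onto}, and this is where the Stein \nbh\ basis enters. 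Given $g\in\ocal(M)$, it extends holomorphically to a \nbh\ of $M$ in $L$; choosing a Stein \nbh\ $\Omega\relc\cbb^{N}$ of $K$ small enough that this extension is holomorphic on $\Omega\cap L$, and noting that $\Omega\cap L$ is a closed complex submanifold of the Stein manifold $\Omega$, Cartan's extension theorem produces $G\in\ocal(\Omega)$ (holomorphic in all variables) with $G|_{M}=g$. Since a globally holomorphic $G$ is in particular slice-holomorphic, $G|_{K}\in\ocal(L,K)$ and $r(G|_{K})=g$.

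For the inclusion $\check{S}_{\ocal(K\cap L)}\subseteq\check{S}_{\ocal(L,K)}\cap L$ I would use that, since $\check{S}$ is the smallest closed boundary and is itself a boundary (Proposition \ref{prop-qshilov} (3)), a point $x$ fails to lie in $\check{S}$ exactly when some \nbh\ $U\ni x$ makes $K\setminus U$ a boundary. Fix $p\in\check{S}_{\ocal(M)}$ and a \nbh\ $U$ of $p$ in $K$. Applying this characterization inside $M$ yields $g\in\ocal(M)$ with $\max_{M\setminus U}|g|<\mu:=\max_{M}|g|=|g(p_0)|$ for some $p_0\in U\cap M$. Extend $g$ to a globally holomorphic $G$ on a \nbh\ of $K$ as above and damp it transversally: set $F:=G\cdot\chi(\pi(\cdot))$ with $\chi\in\ccal^{2}(\cbb^{q},[0,1])$, $\chi(c_0)=1$, $\chi$ decaying away from $c_0$. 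Then $F\in\ocal(L,K)$, since $\chi(\pi(\cdot))$ is constant on slices. A short compactness argument shows that for a suitable decay rate of $\chi$ one has $\max_{K\setminus U}|F|<\mu\le\max_{K}|F|$: on the part of $K\setminus U$ where $\pi$ is near $c_0$ one uses that $|G|=|g|\le\mu'<\mu$ on the compact set $M\setminus U$ together with continuity of $|G|$, and on the part where $\pi$ is bounded away from $c_0$ one plays the bound for $|G|$ on $K$ against a small value of $\chi$. Hence $U$ meets $\check{S}_{\ocal(L,K)}$; letting $U$ shrink to $p$ and using closedness gives $p\in\check{S}_{\ocal(L,K)}\cap L$.

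For the reverse inclusion the clean part concerns genuine peak points. If $F\in\overline{\ocal(L,K)}$ peaks at a point $p\in L$, then $\max_{K}|F|=|F(p)|$ and $p\in M$ force $\max_{M}|F|=|F(p)|$ with strict inequality on $M\setminus\{p\}$; as $F|_{M}\in\overline{\ocal(M)}$, this exhibits $p$ as a peak point of $\overline{\ocal(M)}$, so $p\in\check{S}_{\ocal(M)}$. By Proposition \ref{prop-qshilov} (3) (through Bishop's theorem) both Shilov boundaries are the closures of their peak-point sets, so what remains is a point $p\in\check{S}_{\ocal(L,K)}\cap L$ that is only a limit of peak points $p_k\to p$ with $p_k\notin L$. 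Restricting the peaking functions to the slices through $p_k$ places each $p_k$ in $\check{S}_{\ocal(S_{\pi(p_k)})}$, while $\pi(p_k)\to c_0$.

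Thus the whole statement reduces to an upper-semicontinuity of the slice Shilov boundaries: $\check{S}_{\ocal(S_c)}$ must not accumulate, as $c\to c_0$, at a point of $M\setminus\check{S}_{\ocal(M)}$. This stability is the main obstacle, and it is precisely where I would invoke the Stein \nbh\ basis a second time, controlling the slice hulls through the holomorphic hull of $K$ in a Stein \nbh\ (the step running parallel to Theorem~3 in \cite{Ba2}). Concretely, a point $p\in M\setminus\check{S}_{\ocal(M)}$ carries a representing measure for $\overline{\ocal(M)}$ supported in $\check{S}_{\ocal(M)}$, i.e.\ it lies in the interior of the slice hull; pulling this measure back through $r$ represents $p$ as a character of $\overline{\ocal(L,K)}$ by a measure supported off a \nbh\ of $p$, and slice-stability furnishes the same for all nearby off-$L$ points, so a whole \nbh\ of $p$ can be deleted from the boundary of $\ocal(L,K)$, giving $p\notin\check{S}_{\ocal(L,K)}$. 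Combining the two inclusions yields the claimed equality $\check{S}_{\ocal(K\cap L)}=\check{S}_{\ocal(L,K)}\cap L$.
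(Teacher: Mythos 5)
Your first inclusion is sound and is essentially the paper's argument in different clothing: extend $g\in\ocal(K\cap L)$ holomorphically to a Stein \nbh\ of $K$ via Cartan's theorem and then damp the extension by a \fct\ of the transverse variable alone. The paper uses the explicit damping \fcts\ $\chi_{n,L}=\bigl(1+n(h_1^2+\cdots+h_q^2)\bigr)^{-1}$ from Example \ref{ex-qhol} (4) and lets $n\to\infty$, comparing $\max_K|F_n|$ with $\max_{S_0}|F_n|$ via the boundary property and Lemma \ref{lem-decr-sh}, rather than your $\ccal^2$ cutoff and deleted-\nbh\ characterization, but these are cosmetic differences. The peak-point half of your reverse inclusion also coincides with the paper's: restricting a peak \fct\ for $\cl{\ocal(L,K)}$ at $p\in L$ shows $L\cap P_{\cl{\ocal(L,K)}}\subset P_{\cl{\ocal(K\cap L)}}\subset\check{S}_{\ocal(K\cap L)}$.

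The gap is in your residual case, a point $p\in\check{S}_{\ocal(L,K)}\cap L$ that is only a limit of peak points $p_k\notin L$. Everything there rests on the assertion that ``slice-stability furnishes the same for all nearby off-$L$ points'', i.e.\ that the slice Shilov boundaries $\check{S}_{\ocal(S_c)}$ cannot accumulate, as $c\to c_0$, at a point of $(K\cap L)\setminus\check{S}_{\ocal(K\cap L)}$. That is precisely the statement to be proved in this case, and you neither formulate it precisely nor prove it: the representing-measure construction you sketch handles only the single point $p$ (and at best the points of $V\cap L$), whereas to delete a \nbh\ $V$ of $p$ from the boundary of $\ocal(L,K)$ you need, for \emph{every} $z\in V$ including the off-$L$ ones, a representing measure supported in $K\setminus V$ --- and producing those is exactly the unproven stability. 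Nothing in the Stein-\nbh\ hypothesis hands you such upper semicontinuity for free; the slices $S_c$ need not even converge to $S_{c_0}$ in the Hausdorff metric as $c\to c_0$. It is worth noting that you have put your finger on a point the paper itself glosses over: its proof passes from $L\cap P_{\cl{\ocal(L,K)}}\subset\check{S}_{\ocal(K\cap L)}$ directly to $L\cap\cl{P}_{\cl{\ocal(L,K)}}\subset\check{S}_{\ocal(K\cap L)}$ using only that the right-hand side is closed, which tacitly assumes $L\cap\cl{P}_{\cl{\ocal(L,K)}}\subset\cl{L\cap P_{\cl{\ocal(L,K)}}}$. So you have correctly isolated the delicate step, but your proposal does not close it.
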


\begin{proof}

Observe that $K \cap L$ is non-empty if and only if $\check{S}_{\ocal(L,K)}\cap L$ is non-empty. Indeed, assume that $K\cap L$ is non-empty, but $L$ does not intersect $S_0:=\check{S}_{\ocal(L,K)}$. For $n \in \nbb$ let $\chi_{n}:=\chi_{n,L}$ be the \fcts\ from the part (4) of Example \ref{ex-qhol}. It is obvious that $\chi_{n} \in \ocal(L,K)$, since it is constant on each plane of codimension $q$ parallel to $L$. Recall that $\chi_{n}$ decreases to the characteristic \fct\ of $L$. Then for large enough integer $n \in \nbb$, we can arrange that
$$
\max_{K} \chi_{n} \geq \max_{K \cap L} \chi_{n} > \max_{S_0} \chi_{n},
$$
which is a contradiction to the definition of the Shilov boundary for the class $\ocal(L,K)$. The other direction is obvious, because $\check{S}_{\ocal(L,K)}$ is a non-empty subset of $K$ due to Proposition \ref{prop-qshilov} (3).

We continue by proving the inclusion $\check{S}_{\ocal(K \cap L)} \subset \check{S}_{\ocal(L,K)} \cap L$. Let again be $S_0:=\check{S}_{\ocal(L,K)}$. We have to show that $\max_{K \cap L}|f| = \max_{S_0 \cap L}|f|$ for every \fct\ $f \in \ocal(K \cap L)$. Pick an arbitrary \fct\ $f \in \ocal(K\cap L)$. Then $f \in \ocal(U\cap L)$ for some open \nbh\ $U$ of $K$. Since $K$ has a Stein \nbh\ basis, we can assume that $U$ is \psc. Let $F$ be a holomorphic extension of $f$ to the whole of $U$. Then $F_{n}:= F \cdot\chi_{n,L} \in \ocal(L,K)$ for every $n \in \nbb$. Furthermore, we have that
$$
\max_{K \cap L}|f| = \lim_{n \to \infty} \max_{K}|F_{n}| = \lim_{n \to \infty} \max_{S_0}|F_{n}| = \max_{S_0 \cap L}|f|.
$$
By the definition it means that $\check{S}_{\ocal(K \cap L)}$ is contained in $S_0 \cap L$. For the other inclusion, take a point $p \in L \cap P_{\cl{\ocal(L,K)}}$. Then there is a peak \fct\ $f$ in $\cl{\ocal(L,K)}$ such that $\{ z \in K : |f(z)|=\max_K|f|\}=\{ p\}$. It is easy to see that $g=f|L$ lies in $\cl{\ocal(K\cap L)}$ and that $\{ z \in K \cap L : |g(z)|=\max_{K \cap L}|g|\}=\{ p\}$, because $p \in K \cap L$. Thus, $p$ is also a peak point for $\cl{\ocal(K\cap L)}$. We obtain that
$$
L \cap P_{\cl{\ocal(L,K)}} \subset P_{\cl{\ocal(K\cap L)}} \subset \check{S}_{\cl{\ocal(K\cap L)}}=\check{S}_{\ocal(K\cap L)}.
$$
Together with Proposition \ref{prop-qshilov} (3) we conclude that
$$
L \cap \check{S}_{\ocal(L,K)} = L \cap \cl{P}_{\cl{\ocal(L,K)}} \subset \check{S}_{\ocal(K\cap L)}.
$$
\end{proof}

\section{Generalization of Bychkov's theorem}

In \cite{By}, S.N.Bychkov gave a characterization of the Shilov boundary for bounded convex domains $D \subset \cbb^N$. Our goal in this section is to generalize this theorem to Shilov boundaries for subclasses of \qpsh\ and \qhol\ \fcts\ (see Theorem \ref{Cqbyckov} below).

First, we introduce one more subclass of \cont\ \fcts\ which is usually used when working with the classical Shilov boundary for holomorphic \fcts. Namely, given a compact set $K$ in $\cbb^N$, the set $A_0(K):=\ocal_0(\inti(K))\cap \ccal(K)$ forms a uniform subalgebra of $\ccal(K)$.

We recall the main result of Bychkov's article \cite{By}.

\begin{thm}[Bychkov, 1981] Let $D$ be a bounded convex open set in $\cbb^2$. A boundary point $p \in bD$ is not in $\check{S}_{A_0(\cld)}$ if and only if there is a \nbh \ $U$ of $p$ in $bD$ such that $U$ consists only of complex points (see Definition \ref{Cdefqcomplex}).
\end{thm}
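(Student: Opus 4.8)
The plan is to turn the whole statement into a question about peak points by invoking the identity $\check{S}_{A_0(\cld)}=\cl{P}_{A_0(\cld)}$ furnished by Proposition \ref{prop-qshilov} (4), together with $\check{S}_{A_0(\cld)}\subset bD$ from Proposition \ref{prop-qshilov} (2). Following the case $q=0$ of Definition \ref{Cdefqcomplex}, I call a boundary point a \emph{complex point} if it lies in a relatively open piece $\Delta$ of a complex line with $\Delta\subset bD$, and \emph{non-complex} otherwise. Since $\check{S}_{A_0(\cld)}$ is closed, the asserted equivalence is exactly the set identity $\check{S}_{A_0(\cld)}=\cl{\{\,p\in bD : p\text{ is non-complex}\,\}}$, which I would split into two inclusions: first, every peak point is non-complex (this gives the direction ``a \nbh\ of complex points $\folgt p\notin\check{S}_{A_0(\cld)}$''); and second, every non-complex point lies in $\check{S}_{A_0(\cld)}$ (this gives the converse after taking closures).

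For the first, easy inclusion I would show that a complex point can never be a peak point. Suppose $x$ lies in a relatively open disc $\Delta\subset bD$ of a complex line, parametrized by an affine map $\psi$, and let $f\in A_0(\cld)$ peak at $x$. Fixing an interior point $z_\ast\in D$ and setting $\vphi_\eps(\zeta):=(1-\eps)\,\psi(\zeta)+\eps z_\ast$, convexity of $D$ forces $\vphi_\eps(\zeta)\in D$ for every $\eps\in(0,1)$; hence each $f\circ\vphi_\eps$ is \hol, and since $f$ is \cont\ on $\cld$ these converge uniformly on compacta to $f\circ\psi$ as $\eps\to 0$. Thus $f\circ\psi$ is \hol, and $|f\circ\psi|$ attains an interior maximum at the preimage of $x$, so by the maximum modulus principle $|f|$ is constant on $\Delta$, contradicting that $f$ peaks at the single point $x$. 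Consequently $P_{A_0(\cld)}$ contains no complex point, and a boundary \nbh\ consisting only of complex points is disjoint from $P_{A_0(\cld)}$, placing $p$ outside $\cl{P}_{A_0(\cld)}=\check{S}_{A_0(\cld)}$.

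The converse inclusion is the substantial part. Let $p$ be non-complex. Convexity of $\cld$ gives a $\cbb$-linear functional $\ell_1$ with $\Re\,\ell_1\le\Re\,\ell_1(p)$ on $\cld$ and equality exactly on a compact convex face $F_1\ni p$, so that $g_1:=\exp(\ell_1-\ell_1(p))\in A_0(\cld)$ satisfies $|g_1|\le 1$ and $\{|g_1|=1\}=F_1$. If $F_1=\{p\}$ then $p$ is already a peak point. Otherwise I would peak at $p$ \emph{inside} $F_1$: in $\cbb^2$ the face lies in the real hyperplane $\{\Re\,\ell_1=\Re\,\ell_1(p)\}$, whose linear part carries a single complex direction, so that non-complexity of $p$ means exactly that $p$ is extremal in this complex direction within $F_1$. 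I would then assemble $f=g_1^{\,N}\cdot\exp(\text{corrections})$: a large power $N$ of $g_1$ makes $|f|$ negligible off a \nbh\ of $F_1$; a linear exponential factor supporting $F_1$ at $p$ in its complex direction (available precisely because $p$ is complex-extreme) exposes that direction; and quadratic factors such as $\exp(c\,\ell_1^2)$, which on the face reduce to a negative-definite quadratic in the transverse real parameters, penalize the remaining real directions. For suitable coefficients the maximum modulus of $f$ is forced into an arbitrarily small \nbh\ of $p$, so every closed $A_0(\cld)$-boundary meets each such \nbh, whence $p\in\check{S}_{A_0(\cld)}$; taking closures yields $\cl{\{\text{non-complex points}\}}\subset\check{S}_{A_0(\cld)}$.

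The main obstacle is exactly this last construction and, in particular, verifying that the competing exponential contributions balance so that the maximum-modulus set genuinely collapses onto $p$: a functional supporting only the face need not control $|f|$ on the rest of $\cld$, and after exposing the complex direction one may still be left with a real chord on which $|f|$ is maximal, which must be killed by a further quadratic correction. Here one must use convexity together with the special feature of $\cbb^2$ that $F_1$ is at most three-real-dimensional with a unique complex direction, so that exposing one complex direction and the residual real directions suffices. A clean reformulation I would keep in reserve is to route the argument through peak-set/peak-point machinery for the distinguished set $F_1$, isolating the single statement ``a complex-extreme point of a compact convex face in $\cbb^2$ is a peak point of the restricted algebra,'' which is where essentially all the difficulty concentrates.
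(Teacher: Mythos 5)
Your reduction of the theorem to two inclusions via $\check{S}_{A_0(\cld)}=\cl{P}_{A_0(\cld)}$ is sound, and your proof of the easy inclusion is complete and correct: the slicing maps $\vphi_\eps=(1-\eps)\psi+\eps z_\ast$ do show that $f\circ\psi$ is holomorphic for every $f\in A_0(\cld)$, so no complex point is a peak point, and a boundary neighborhood consisting of complex points therefore avoids $\cl{P}_{A_0(\cld)}$. Be aware, though, that the paper does not prove this theorem at all: it is quoted from Bychkov's article, and the only part of it the paper later uses (in Proposition \ref{Cqcomplexsilov}) is Proposition 2.6 of \cite{By} --- exactly the hard inclusion you are attempting --- taken as a black box. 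So the question is whether your argument for that inclusion stands on its own.

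It does not, and you concede as much. Three concrete gaps. First, the claim that non-complexity of $p$ ``means exactly that $p$ is extremal in the complex direction within $F_1$'' is false: non-complexity only says that $F_1\cap(p+\cbb v)$ contains no open disc of the complex line $p+\cbb v$ around $p$, and a real point can perfectly well lie in the relative interior of a real segment pointing in the direction $v$ (such a segment lies in $F_p(\cld)$ by Lemma \ref{Clembyc} without forcing $E_p(\cld)$ to contain a complex line), so $p$ need not be an extreme point of that planar slice; moreover this reformulation already presupposes the real/complex dichotomy of Corollary \ref{Ccorbyc}, which itself requires proof. Second, even at a genuine extreme point of a planar convex set there need not exist a supporting line meeting the set only at that point --- extreme points need not be exposed --- so the ``linear exponential factor supporting $F_1$ at $p$ in its complex direction'' on which your construction hinges may not exist; one must either approximate by exposed points and settle for $p\in\cl{P}_{A_0(\cld)}$, or run the iterated-face induction of Remark \ref{rem-minface}, and at each stage the supporting functional is constant on a whole sub-face rather than peaking at $p$. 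Third, the balancing of $g_1^N$ against the corrections $\exp(c\,\ell_1^2)$ and the collapse of the maximum-modulus set into an arbitrary neighborhood of $p$ are asserted, not verified; this is precisely where the content of Bychkov's Proposition 2.6 lives. As written, the hard direction is a plausible programme rather than a proof.
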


\brem\label{rem-snhbcvx} If $D \relc \cbb^N$ is a bounded convex domain, it is easy to verify that $D$ has a Stein \nbh\ basis and that $$
\check{S}_{A^\pi_q(\cld)}=\check{S}_{\ocal^\pi_q(\cld)}, \quad \check{S}_{A_q(\cld)}=\check{S}_{\ocal_q(\cld)} \qand \check{S}_{\acal\PSH_q^0(\cld)}=
\check{S}_{\PSH_0(\cld)\cap\ccal{\cld}}.
$$

\erem

We recall some definitions from convexity theory given in Bychkov's aricle \cite{By}. We also mainly use his notations.

\begin{defn}\label{def-cvx} \

\benum

\item A set $K\subset\rbb^m$ is called \textit{convex} if for every two points $x_1$, $x_2$ contained in $K$ the segment $[x_1,x_2]=\{ (1-t)x_1+tx_2 : 0 \leq t \leq 1\}$ also lies in $K$. The dimension of the smallest (real) plane containing $K$ is the \textit{dimension of $K$}.

\item Let $K$ be a \textit{convex body}, i.e., a compact convex set with non-empty interior, and let $p$ a boundary point of $K$. Every hyperplane $H$ in $\rbb^m$ splits the space $\rbb^m$ into two halfspaces $H^+$ and $H^-$. The hyperplane $H$ is then said to be \textit{supporting for $K$ at $p$} if $H$ contains $p$ and $K$ lies in one of the closed halfspaces $H \cup H^+$ or $H \cup H^-$.

\item A subset of the boundary $bK$ of $K$ which results from an intersection of $K$ with supporting hyperplanes is called a \textit{face of $K$}. A face is again a lower dimensional convex set. The empty set and $K$ itself are also considered to be faces. A face of a face of $K$ does not need to be a face of $K$. The arbitrary intersection of faces of $K$ is again a face of $K$.

\eenum

\end{defn}

\brem\label{rem-minface}

Given a convex body $K$, there exists a unique minimal face $F_1=F_{\mathrm{min}}(p,K)$ of $F_0:=K$ in the boundary of $K$ containing the point $p$. It can be defined as the intersection of $K$ and all supporting hyperplanes for $K$ at $p$. Then there are two options for $p$: either it is an inner point of the convex body $F_1$ or it lies on the boundary of $F_1$. In the second case, the point $p$ might again lie either in the interior of the minimal face $F_2=F_{\mathrm{min}}(p,F_1)$ of $F_1$ or in the boundary of $F_2$. Inductively, we obtain a finite sequence $(F_j)_{j=0,\ldots, j(p)}$ of convex bodies $F_j$ in $K$ of dimension $m_j$ such that $F_{j+1}=F_{\mathrm{min}}(p,F_j) \supset F_j$ for each $j\in\{0,\ldots, j(p)-1\}$ and such that either, if $m_{j(p)}>0$, the point $p$ is an interior point of $F_{j(p)}$, or, if $m_{j(p)}=0$, the minimal face $F_{j(p)}$ consists only of the point $p$.

\erem

\bdefn The convex body $F_p(K):=F_{j(p)}$ obtained in Remark \ref{rem-minface} above will be called the \textit{face essentially containing} $p$. It is contained in a plane $E_p(K)$ of dimension $m_{j(p)}$ which admits $E_{j(p)} \cap K=F_{j(p)}$.
\edefn

\begin{ex} Let $\Delta$ be the unit disc in $\rbb^2$ and consider the set $K=\cl{\Delta} \cup ([-1,1]\times[-1,0])$. It is a convex body in $\rbb^2$. The plane $\pi_1=\{ 1\}\times \rbb$ is the only supporting hyperplane of $K$ at $p=1$ in $\rbb^2$. Thus, the minimal face of $K$ containing $p$ is the segment $F_1=\pi_1\cap K = \{ 1\}\times[-1,0]$. The point $p$ lies in the boundary of $F_1$ in $\{ 1\}\times \rbb$. Then the set $\pi_2:=\{ 1 \}$ is the only supporting hyperplane of $F_1$ at $p$ in $\{ 1\}\times \rbb$. Hence, the minimal face of $F_1$ having $p$ inside is the set $F_2=\pi_2 \cap K = \{ p\}$. Therefore, the face essentially containing $p$ is the set $F_2=\{ 1\}$.
\end{ex}

In the following, let $D$ be always a bounded convex domain in $\cbb^N$.

\begin{defn}\label{Cdefqcomplex}
Let $p\in bD$ and let $E_p^{\cbb}(\cld)$ be the largest complex plane inside $E_p(\cld)$ passing through $p$. We define $\nu(p)$ to be the complex dimension of $E_p^{\cbb}(\cld)$. If $\nu(p)=0$, then $E_p(\cld)$ is totally real and we say that the point $p$ is \textit{real}.

The set $\Pi_p(\cld)$ will denote the set of all complex planes $\pi$ in $\cbb^N$ such that there exists a domain $G\subset\cbb^N$ with $p\in G\cap \pi\subset bD$. If $\Pi_p(\cld)$ is not empty, then $p$ is called \textit{complex}.
\end{defn}

We restate Lemma 2.5 in \cite{By} and its important corollary.

\begin{lem}\label{Clembyc}  If $I\subset bD$ is an open segment containing $p\in bD$, then $I\subset F_p(\cld)$.
\end{lem}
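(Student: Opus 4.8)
The plan is to prove the inclusion by induction along the finite chain of minimal faces
$$
\cld=F_0\supset F_1\supset\cdots\supset F_{j(p)}=F_p(\cld)
$$
constructed in Remark \ref{rem-minface}. The whole argument rests on a single one-step observation, which I would apply repeatedly with the ambient convex body $F_j$ (sitting inside its affine hull $E_j$) in place of $\cld$: \emph{if $C$ is a convex body in a real plane $E$, if $p$ lies in the relative boundary of $C$, and if $J\subset bC$ is an open segment with $p$ in its relative interior, then $J\subset F_{\mathrm{min}}(p,C)$.} Here ``relative interior'' is the ``inner point'' notion from Remark \ref{rem-minface}.

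To prove this one-step claim I would use the description $F_{\mathrm{min}}(p,C)=C\cap\bigcap_H H$, where $H$ ranges over the supporting hyperplanes of $C$ at $p$ inside $E$ (Definition \ref{def-cvx}). Fix such an $H$, write it as $H=\{x\in E:\langle n,x\rangle=c\}$ with $\langle n,p\rangle=c$ and $C\subset\{\langle n,\cdot\rangle\le c\}$, and let $v$ be a direction vector of $J$. Since $p$ is an interior point of $J$, the points $p+sv$ belong to $J\subset C$ for all small $s$ of either sign, so $s\,\langle n,v\rangle=\langle n,p+sv\rangle-c\le 0$ for $s$ both positive and negative; hence $\langle n,v\rangle=0$ and $J\subset H$. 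As $H$ was arbitrary, $J\subset\bigcap_H H$, and since also $J\subset C$ we obtain $J\subset F_{\mathrm{min}}(p,C)$.

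To run the induction I also need to re-enter the hypotheses at each stage, i.e.\ to know that after passing to $F_j$ the segment still lies in the relative boundary of $F_j$. This is the step I expect to be the main (though mild) obstacle, since it mixes the change of ambient dimension with the notion of relative interior; it uses the standard convexity fact that if one point is in the relative interior of a convex set and another lies in the set, then the half-open segment between them is relatively interior. Concretely, suppose $I\subset F_j$ has already been shown and that $p$ lies in the relative boundary of $F_j$ (exactly the case $j<j(p)$). If some $q\in I$ were relatively interior to $F_j$, then choosing $r\in I$ with $p$ strictly between $q$ and $r$ (possible because $p$ is interior to $I$), the point $p\in(q,r)$ would be relatively interior to $F_j$, contradicting $p\in bF_j$. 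Hence $I\subset bF_j$.

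Putting the pieces together: the base case applies the one-step claim with $C=\cld$ (here the relative boundary is the ordinary boundary $bD$, as $\cld$ is full-dimensional) to get $I\subset F_1$; and for $0<j<j(p)$, the boundary step gives $I\subset bF_j$ and then the one-step claim with $C=F_j$ gives $I\subset F_{j+1}$. As the chain is finite, this yields $I\subset F_{j(p)}=F_p(\cld)$. In the degenerate case $F_p(\cld)=\{p\}$ the induction would force $I\subset\{p\}$; since $I$ is a nondegenerate open segment this merely records that no such $I$ can occur, and the asserted inclusion holds a fortiori.
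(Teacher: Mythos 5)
Your argument is correct. Note, however, that the paper itself offers no proof of this lemma: it is explicitly restated from Bychkov's article (Lemma 2.5 in \cite{By}), so there is nothing internal to compare against; your write-up supplies a correct, self-contained justification of the cited fact. The two ingredients --- that a supporting hyperplane at an interior point of a segment contained in the body must contain the whole segment, and that the chain $F_0\supset F_1\supset\cdots\supset F_{j(p)}$ terminates only when $p$ becomes relatively interior --- are exactly what is needed, and your handling of the degenerate case $F_p(\cld)=\{p\}$ is fine. One small remark: the intermediate step showing $I\subset bF_j$ is actually superfluous, since your one-step claim only uses $J\subset C$ (not $J\subset bC$) together with $p\in bC$ and $p$ interior to $J$; the induction runs directly on ``$I\subset F_j$ and $p$ lies in the relative boundary of $F_j$ for $j<j(p)$,'' the latter being automatic from the construction in Remark \ref{rem-minface}.
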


\begin{cor}\label{Ccorbyc} A boundary point $p\in bD$ is either real or complex.
\end{cor}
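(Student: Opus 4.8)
The plan is to prove the (substantive) implication: \emph{if $p$ is not real, then $p$ is complex}. Recall that $p$ is real precisely when $\nu(p)=0$, i.e. when $E_p^{\cbb}(\cld)$ degenerates to the single point $p$. So assuming $p$ is not real means $\nu(p)\geq 1$, and hence $E_p^{\cbb}(\cld)$ is a genuine complex plane through $p$ of complex dimension at least $1$. I would take this very plane $\pi:=E_p^{\cbb}(\cld)$ as the candidate witnessing complexity and try to exhibit a domain $G\subset\cbb^N$ with $p\in G\cap\pi\subset bD$, which is exactly what it means for $\pi$ to lie in $\Pi_p(\cld)$.

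First I would locate $p$ inside its essential face. Since $\pi=E_p^{\cbb}(\cld)\subset E_p(\cld)$ and $\pi$ has real dimension $\geq 2$, the real dimension $m_{j(p)}$ of $E_p(\cld)$ is at least $2$, in particular positive; note that the cases $m_{j(p)}\in\{0,1\}$ carry no complex line and hence would force $\nu(p)=0$, so they are automatically excluded. By the construction recalled in Remark \ref{rem-minface}, $m_{j(p)}>0$ forces $p$ to be an interior point of the convex body $F_p(\cld)=F_{j(p)}$ relative to its ambient plane $E_p(\cld)$. Moreover $F_p(\cld)=E_p(\cld)\cap\cld$ is a (proper) face of the convex body $\cld$, hence is contained in $bD$.

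Consequently there is a radius $r>0$ such that the relative ball $\{x\in E_p(\cld):|x-p|<r\}$ is contained in $F_p(\cld)\subset bD$. Letting $G$ be the Euclidean ball $\{z\in\cbb^N:|z-p|<r\}$, which is a domain, I would observe that $G\cap\pi=\{z\in\pi:|z-p|<r\}$ is contained in that relative ball because $\pi\subset E_p(\cld)$, and therefore $p\in G\cap\pi\subset bD$. This shows $\pi\in\Pi_p(\cld)$, so $\Pi_p(\cld)\neq\emptyset$ and $p$ is complex. I would then close by remarking that the two alternatives are in fact mutually exclusive: if $p$ were complex through some complex plane $\pi'$, a supporting-hyperplane argument — a real affine halfspace containing a complex disc centred at a boundary point of the halfspace must contain the whole disc in the bounding hyperplane — shows $\pi'\subset E_p^{\cbb}(\cld)$ and hence $\nu(p)\geq 1$.

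The main obstacle is the bookkeeping around relative interiors: one must be careful that $\nu(p)\geq 1$ genuinely forces $m_{j(p)}\geq 2$ and thereby places $p$ in the relative interior of $F_p(\cld)$, and that the complex structure of $\pi=E_p^{\cbb}(\cld)$ lets an ordinary real relative neighborhood of $p$ in $E_p(\cld)$ double as a genuine complex neighborhood of $p$ sitting inside $bD$. Lemma \ref{Clembyc}, which confines every open segment of $bD$ through $p$ to $F_p(\cld)$, is precisely the tool pinning down this face structure and may be invoked in place of the explicit appeal to Remark \ref{rem-minface} to guarantee that the relevant neighborhood of $p$ indeed stays within the face.
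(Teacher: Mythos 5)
Your argument is correct, and it is worth noting that the paper itself offers no proof of this corollary: both Lemma \ref{Clembyc} and the dichotomy are imported verbatim from Bychkov's article \cite{By}. What you have written is in effect a self-contained reconstruction, and it coincides with the two arguments the paper deploys immediately afterwards: your ball construction for ``not real $\Rightarrow$ complex'' (relative interior of $F_p(\cld)$ in $E_p(\cld)$, intersected with a Euclidean ball) is exactly the proof of Corollary \ref{lem-max-c-plane}, and your exclusivity argument (every $\pi'\in\Pi_p(\cld)$ is forced into $E_p^{\cbb}(\cld)$ via Lemma \ref{Clembyc}) is exactly the proof of Corollary \ref{cor-pi-unique}. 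Since the paper derives \ref{lem-max-c-plane} \emph{from} the present corollary, your reorganization has the virtue of removing any appearance of circularity. Two small points of hygiene. First, $F_p(\cld)$ is an iterated face, and the paper explicitly warns that a face of a face need not be a face of $\cld$; your parenthetical ``is a (proper) face of the convex body $\cld$'' is therefore not quite right, but the conclusion you need, $F_p(\cld)\subset bD$, survives because $F_p(\cld)\subset F_1$ and $F_1$, being the intersection of $\cld$ with supporting hyperplanes at $p$, cannot meet the open set $D$. Second, your exclusivity step tacitly assumes the planes in $\Pi_p(\cld)$ have positive dimension (otherwise $\pi'=\{p\}$ would make every boundary point vacuously complex); this is implicit in the paper's definition as well, so it is a shared convention rather than a gap, but it is the hypothesis under which ``$\pi'\subset E_p^{\cbb}(\cld)$'' actually yields $\nu(p)\geq 1$. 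Note also that your one-line supporting-hyperplane argument only places a complex disc in the first face $F_1$; to reach $F_p(\cld)$ one must iterate through the whole chain of Remark \ref{rem-minface}, which is precisely what Lemma \ref{Clembyc} packages, so your closing remark that this lemma is the real tool is the correct emphasis.
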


From this we can derive further consequences.

\begin{cor}\label{lem-max-c-plane} If $p \in bD$ is complex, then $E_p^{\cbb}(\cld) \in \Pi_p(\cld)$.
\end{cor}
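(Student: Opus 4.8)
The plan is to produce a domain $G'$ in $\cbb^N$ with $p \in G' \cap E_p^{\cbb}(\cld) \subset bD$, which by Definition \ref{Cdefqcomplex} is exactly what membership in $\Pi_p(\cld)$ demands. Two things must be secured along the way: that $E_p^{\cbb}(\cld)$ is a genuine positive-dimensional complex plane, i.e. $\nu(p)\geq 1$, and that a small ball around $p$ meets $E_p^{\cbb}(\cld)$ only in $bD$.

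First I would unwind the hypothesis. Since $p$ is complex, $\Pi_p(\cld)\neq\emptyset$, so I fix some $\pi\in\Pi_p(\cld)$ together with a domain $G$ satisfying $p\in G\cap\pi\subset bD$. As $G\cap\pi$ is relatively open in $\pi$, I choose a relatively open ball $B\subset G\cap\pi$ centered at $p$; being a ball in the affine complex plane $\pi$, it is convex and $B\subset bD$. For each $x\in B$ the reflection $2p-x$ through $p$ again lies in $B$, so the open segment joining $x$ and $2p-x$ lies in $B\subset bD$ and contains $p$ in its interior. By Lemma \ref{Clembyc} this segment lies in $F_p(\cld)$, whence $x\in F_p(\cld)$. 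Thus $B\subset F_p(\cld)\subset E_p(\cld)$.

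The key step is then a linear-algebra observation converting the existence of \emph{some} complex plane in the boundary into a statement about the canonical plane $E_p^{\cbb}(\cld)$. Writing $\pi=p+V$ with $V$ a complex linear subspace and $E_p(\cld)=p+W$ with $W$ a real linear subspace, the inclusion $B\subset E_p(\cld)$ forces every sufficiently small $v\in V$ to lie in $W$; since $W$ is linear this gives $V\subset W$, i.e. $\pi\subset E_p(\cld)$. Hence $\pi$ is a complex plane through $p$ contained in $E_p(\cld)$, and by the maximality defining $E_p^{\cbb}(\cld)$ we obtain $\pi\subset E_p^{\cbb}(\cld)$; in particular $\nu(p)=\dim_{\cbb}E_p^{\cbb}(\cld)\geq\dim_{\cbb}\pi\geq 1$, so $E_p^{\cbb}(\cld)$ is indeed a bona fide complex plane. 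I expect this passage to be the main obstacle, as it is where Lemma \ref{Clembyc} and the complex-linearity of $\pi$ have to be combined; the use of Corollary \ref{Ccorbyc} is implicit here, guaranteeing the dichotomy real/complex that makes $\nu(p)\geq 1$ meaningful.

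It remains to build the domain. Because $\nu(p)\geq 1$ we have $m_{j(p)}\geq 2\nu(p)>0$, so by Remark \ref{rem-minface} the point $p$ lies in the relative interior of $F_p(\cld)=E_p(\cld)\cap\cld$ inside $E_p(\cld)$. Consequently there is an open ball $G'$ in $\cbb^N$ centered at $p$ with $G'\cap E_p(\cld)\subset F_p(\cld)\subset bD$. Since $E_p^{\cbb}(\cld)\subset E_p(\cld)$, intersecting yields $p\in G'\cap E_p^{\cbb}(\cld)\subset G'\cap E_p(\cld)\subset bD$, which is precisely the assertion $E_p^{\cbb}(\cld)\in\Pi_p(\cld)$.
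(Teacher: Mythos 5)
Your proof is correct and its decisive step --- that $p$ is an interior point of $F_p(\cld)$ relative to $E_p(\cld)$, so a small ambient ball meets $E_p(\cld)$, and a fortiori $E_p^{\cbb}(\cld)$, only inside $bD$ --- is exactly the paper's argument. The only difference is that where the paper obtains $\nu(p)\geq 1$ in one line by citing Corollary \ref{Ccorbyc}, you re-derive it from Lemma \ref{Clembyc} (in effect proving Corollary \ref{cor-pi-unique} along the way); note only that the open segment joining $x$ and $2p-x$ does not contain the endpoint $x$ itself, so you should pass to its closure, using that the face $F_p(\cld)$ is closed, to conclude $x\in F_p(\cld)$.
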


\begin{proof} Since $p$ is complex, it can not be real due to the previous Corollary \ref{Ccorbyc}. Thus, $E_p^{\cbb}(\cld)$ is not empty and the face essentially containing $p$ can not be a single point. The point $p$ is then an inner point of the convex body $F_p(\cld)$ in $E_p(\cld)$. Hence, there is an open ball $B'$ with center $p$ inside $F_p(\cld)$, and we can find an open ball $B$ in $\cbb^N$ with center $p$ such that $B \cap E_p(\cld)=B'$. Then we obtain that
$$
E_p^{\cbb}(\cld) \cap B \subset E_p(\cld) \cap B=B' \subset F_p(\cld) \subset bD.
$$
It follows now from the definition of $\Pi_p(\cld)$ that $E_p^{\cbb}(\cld)$ lies in $\Pi_p(\cld)$.
\end{proof}

\begin{cor}\label{cor-pi-unique} If $\pi \in \Pi_p(\cld)$, then $\pi$ lies in $E_p^{\cbb}(\cld)$.
\end{cor}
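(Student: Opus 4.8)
The plan is to first show that any $\pi \in \Pi_p(\cld)$ is contained in the real affine plane $E_p(\cld)$, and then to invoke the maximality built into the definition of $E_p^{\cbb}(\cld)$ to upgrade this to $\pi \subset E_p^{\cbb}(\cld)$.

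For the first step I would exploit that, by definition of $\Pi_p(\cld)$, there is a domain $G$ with $p \in G \cap \pi \subset bD$, so an entire open piece of the complex plane $\pi$ around $p$ lies in the boundary of $D$. Fix any point $q \in \pi$ with $q \neq p$ and consider the real line $\ell$ through $p$ and $q$. Near $p$ this line stays inside $G \cap \pi \subset bD$, so it contains an open segment $I \ni p$ with $I \subset bD$. Lemma \ref{Clembyc} then forces $I \subset F_p(\cld) \subset E_p(\cld)$. Since $E_p(\cld)$ is a real affine plane containing two distinct points of $I$, it contains the whole line $\ell$, and in particular $q \in E_p(\cld)$. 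As $q$ was arbitrary, $\pi \subset E_p(\cld)$.

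For the second step, recall that $E_p^{\cbb}(\cld)$ is by definition the largest complex plane inside $E_p(\cld)$ passing through $p$. After translating $p$ to the origin, $E_p(\cld)$ becomes a real linear subspace $V \subset \cbb^N$, whose largest complex subspace is $V \cap iV$; indeed any complex subspace $W \subset V$ satisfies $W = iW \subset V \cap iV$, so $V \cap iV$ contains every complex subspace of $V$. Applying this to $\pi$, which by the first step is a complex plane through $p$ lying in $E_p(\cld)$, yields $\pi \subset E_p^{\cbb}(\cld)$, as desired.

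The main obstacle is the first step: one must produce, for each direction in $\pi$, an honest open segment through $p$ lying in $bD$ so that Lemma \ref{Clembyc} applies, and then argue that an affine plane containing a nondegenerate segment contains the full line through it. Once $\pi \subset E_p(\cld)$ is established, the second step is essentially formal, being nothing more than the defining maximality property of $E_p^{\cbb}(\cld)$.
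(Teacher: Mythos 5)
Your proof is correct and follows essentially the same route as the paper: apply Lemma \ref{Clembyc} to segments through $p$ inside $G\cap\pi\subset bD$ to get $\pi\subset E_p(\cld)$, then invoke the maximality of $E_p^{\cbb}(\cld)$. You merely spell out two points the paper leaves implicit, namely reducing the open piece $G\cap\pi$ to individual segments through $p$ and identifying the largest complex subspace of $E_p(\cld)$ as $V\cap iV$; both elaborations are accurate.
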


\begin{proof} Let $G$ be an open \nbh\ of $p$ in $\cbb^N$ such that $p\in G \cap \pi \subset bD$. It follows from Lemma \ref{Clembyc} that $p\in G \cap \pi$ lies in $F_p(\cld)$. Since $G \cap \pi$ is open in $\pi$, we have that $\pi$ is contained in $E_p(\cld)$. Since $\pi$ is a complex plane containing $p$ and $E_p^{\cbb}(\cld)$ is the largest complex plane inside $E_p(\cld)$, we conclude that $\pi$ lies in $E_p^{\cbb}(\cld)$.
\end{proof}

We specify complex points in the following way.

\bdefn Given $q\in\{ 1,\ldots,N-1\}$, a complex point $p$ is called \textit{$q$-complex} if $\nu(p)\geq q$.
\edefn

The previous classifications can be reduced to the following simple observation.

\brem A boundary point $p$ in $bD$ is $q$-complex if and only if there is a domain $G$ in $\cbb^N$ and a complex plane of dimension at least $q$ such that $p \in G \cap \pi \subset bD$.
\erem

The next lemma asserts that a complex point $p$ is a lower dimensional real point when intersecting the convex body with a complex plane containing $p$ transversal to $E_p^{\cbb}(\cld)$.

\begin{lem}\label{Clembycreal} Let $p$ be a complex point in $bD$. Let $\pi$ be a complex affine plane of codimension $\nu(p)$ such that $E_p^{\cbb}(\cld)\cap\pi=\{ p\}$. Then $p$ is a real boundary point of $\cld \cap \pi$.
\end{lem}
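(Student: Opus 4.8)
The plan is to view $K':=\cld\cap\pi$ as a convex body inside the complex affine plane $\pi\cong\cbb^{N-\nu(p)}$ and to show that $p$ is a \emph{real} boundary point of $K'$ in the sense of Definition \ref{Cdefqcomplex} applied to $K'$ in place of $\cld$. By Corollary \ref{Ccorbyc} applied to $K'$, the point $p$ is either real or complex as a boundary point of $K'$, so it suffices to rule out that $p$ is complex in $K'$. The heart of the matter is the maximality of $E_p^{\cbb}(\cld)$ recorded in Corollaries \ref{lem-max-c-plane} and \ref{cor-pi-unique}: any positive-dimensional complex plane that sits inside $bD$ through $p$ must be contained in $E_p^{\cbb}(\cld)$, and the transversality hypothesis $E_p^{\cbb}(\cld)\cap\pi=\{p\}$ will collapse such a plane to the single point $p$.

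Before running this dichotomy I first have to check that $K'$ genuinely is a convex body in $\pi$, that is, that $\pi\cap\inti D\neq\emptyset$; this is the main technical point. Once $\pi$ meets $\inti D$, the standard convexity fact $\mathrm{relint}(\cld\cap\pi)=\pi\cap\inti D$ yields that the relative boundary of $K'$ in $\pi$ is exactly $bK'=bD\cap\pi$. To produce a point of $\pi\cap\inti D$ I would pass to the tangent cone $C$ of $\cld$ at $p$. Since $p$ is complex, it lies in the relative interior of the face $F_p(\cld)$ (see the proof of Corollary \ref{lem-max-c-plane} and Remark \ref{rem-minface}), so the direction space $E:=E_p(\cld)-p$ is contained in the lineality space of $C$; moreover $C$ is full dimensional because $\cld$ has non-empty interior. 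Choosing a nonzero direction $w\in\inti C\cap E^{\perp}$ and writing $w=a+b$ with $a\in E_p^{\cbb}(\cld)-p\subset E$ and $b$ in the direction space of $\pi$ (possible since transversality makes the latter a complex-linear complement of $E_p^{\cbb}(\cld)-p$), one has $-a$ in the lineality space of $C$, whence $b=w-a\in\inti C+C\subset\inti C$; here $b\neq 0$ because $w\in E^{\perp}\setminus\{0\}$ while $a\in E$. Thus $p+tb\in\pi\cap\inti D$ for small $t>0$.

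With $bK'=bD\cap\pi$ in hand I would argue by contradiction. Suppose $p$ is complex in $K'$. Applying Corollary \ref{lem-max-c-plane} to $K'$ produces a complex plane $\pi':=E_p^{\cbb}(K')\subset\pi$ with $\dim_{\cbb}\pi'\geq 1$ and a domain $G'\subset\pi$ such that $p\in G'\cap\pi'\subset bK'=bD\cap\pi\subset bD$. Extending $G'$ to an open set $G\subset\cbb^N$ with $G\cap\pi=G'$ and using $\pi'\subset\pi$, we obtain $p\in G\cap\pi'=G'\cap\pi'\subset bD$, so that $\pi'\in\Pi_p(\cld)$. Corollary \ref{cor-pi-unique} then forces $\pi'\subset E_p^{\cbb}(\cld)$, and combined with $\pi'\subset\pi$ this gives $\pi'\subset E_p^{\cbb}(\cld)\cap\pi=\{p\}$, contradicting $\dim_{\cbb}\pi'\geq 1$. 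Hence $p$ cannot be complex in $K'$ and is therefore a real boundary point of $\cld\cap\pi$.

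I expect the verification that $\pi\cap\inti D\neq\emptyset$ (so that the dichotomy of Corollary \ref{Ccorbyc} and the extraction of $\pi'$ on $bK'=bD\cap\pi$ are both legitimate) to be the only real obstacle; the contradiction itself is then immediate from the maximality of $E_p^{\cbb}(\cld)$ together with transversality.
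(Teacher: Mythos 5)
Your proposal is correct and follows essentially the same route as the paper: the case where $p$ would be a complex boundary point of $\cld\cap\pi$ is excluded by exactly the same chain of Corollaries \ref{Ccorbyc}, \ref{lem-max-c-plane} and \ref{cor-pi-unique}, producing a positive-dimensional complex plane inside $bD\cap\pi$ through $p$ that must lie in $E_p^{\cbb}(\cld)\cap\pi=\{p\}$. The only difference is that you additionally verify, via the tangent cone, that $\pi\cap\inti D\neq\emptyset$ so that $\cld\cap\pi$ is genuinely a convex body in $\pi$ with relative boundary $bD\cap\pi$ — a point the paper's proof leaves implicit.
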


\begin{proof} If $\nu(p)=N{-}1$, the statement is obviously true, since every boundary point of $\cld \cap \pi$ is real.

Suppose that $\nu(p) \leq N{-}2$ and that the statement is false. Then by Corollary \ref{Ccorbyc} the point $p$ is a complex boundary point of $\cld \cap \pi$. By Corollary \ref{lem-max-c-plane} there exist a domain $G\subset\cbb^N$ and a complex line $\lbb$ in $\pi$ such that $p \in G\cap \lbb \subset bD\cap \pi \subset bD$. Hence, $\lbb \in \Pi_p(\cld)$. By Corollary \ref{cor-pi-unique} the line $\lbb$ lies in $E_p^{\cbb}(\cld)$. But since $E_p^{\cbb}(\cld)\cap\pi=\{ p\}$ and $\lbb \subset \pi$, it follows that $\lbb=\{ p\}$, which is absurd.
\end{proof}

We generalize now Proposition 2.6 in \cite{By} which states that a real boundary point always lies in the Shilov boundary for the class $A_0(\cld)$.

\begin{prop} \label{Cqcomplexsilov} If $p \in bD$, then $p \in \check{S}_{\ocal_{\nu(p)}^\pi(\cld)}$.
\end{prop}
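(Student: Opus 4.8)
The plan is to reduce the assertion to Bychkov's theorem on \emph{real} boundary points by slicing $\cld$ with a complementary complex plane, and then to transport the conclusion back up to $\cld$ using Proposition \ref{prop-lower-shilov}. Write $q:=\nu(p)$; by Corollary \ref{Ccorbyc} the point $p$ is either real ($q=0$) or complex ($q\geq 1$), and in either case $q\leq N-1$, since $F_p(\cld)\subset bD$ forbids $E_p^{\cbb}(\cld)=\cbb^N$.

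First I would fix a complex affine plane $\pi$ of codimension $q$ through $p$ with $E_p^{\cbb}(\cld)\cap\pi=\{p\}$; such a $\pi$ exists by a dimension count, since $\dim_{\cbb}E_p^{\cbb}(\cld)=q$ and $\dim_{\cbb}\pi=N-q$ (for $q=0$ simply take $\pi=\cbb^N$). If $p$ is complex, Lemma \ref{Clembycreal} says precisely that $p$ is a real boundary point of $\cld\cap\pi$; if $p$ is real this holds trivially with $\pi=\cbb^N$. I would then apply Bychkov's real-point result (Proposition 2.6 in \cite{By}) inside the plane $\pi\cong\cbb^{N-q}$ to the convex body $\cld\cap\pi$, obtaining $p\in\check{S}_{A_0(\cld\cap\pi)}$. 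Since $\cld\cap\pi$ is again a convex body, its interior is a bounded convex domain, so Remark \ref{rem-snhbcvx} (with $q=0$) gives $\check{S}_{A_0(\cld\cap\pi)}=\check{S}_{\ocal(\cld\cap\pi)}$, whence $p\in\check{S}_{\ocal(\cld\cap\pi)}$.

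To lift this back, I would use that $\cld$ has a Stein neighborhood basis (Remark \ref{rem-snhbcvx}) and apply Proposition \ref{prop-lower-shilov} with $L=\pi$, which yields $\check{S}_{\ocal(\cld\cap\pi)}=\check{S}_{\ocal(\pi,\cld)}\cap\pi$; hence $p\in\check{S}_{\ocal(\pi,\cld)}$. Finally, since $\pi$ is a complex plane of codimension $q$ we have $\ocal(\pi,\cld)\subset\ocal_q^{\pi}(\cld)$, so the monotonicity of the Shilov boundary (Proposition \ref{prop-ashilov}(6)) gives $\check{S}_{\ocal(\pi,\cld)}\subset\check{S}_{\ocal_q^{\pi}(\cld)}$, and therefore $p\in\check{S}_{\ocal_{\nu(p)}^{\pi}(\cld)}$, as required.

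The genuinely routine part is the chaining of the two already established results, Lemma \ref{Clembycreal} and Proposition \ref{prop-lower-shilov}. The one point that needs care — and which I expect to be the main obstacle — is justifying that the transversal slice $\cld\cap\pi$ is a genuine convex body, i.e.\ that $D\cap\pi\neq\emptyset$, so that the real-point theory and the identity $\check{S}_{A_0}=\check{S}_{\ocal}$ actually apply. This follows from the same dimension count: if $\pi$ lay in a real supporting hyperplane $H$ of $\cld$ at $p$, then $H\supset E_p^{\cbb}(\cld)$, and two affine subspaces of $H$ through $p$ of real dimensions $2q$ and $2(N-q)$ (summing to $2N$) inside a space of real dimension $2N-1$ must meet in positive dimension, contradicting $E_p^{\cbb}(\cld)\cap\pi=\{p\}$; hence $\pi$ is contained in no supporting hyperplane and a separation argument shows it enters $D$. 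A secondary bookkeeping point is keeping straight the $\log|\cdot|$ convention (Remark \ref{rem-bshilov}) under which all these Shilov boundaries of families of continuous functions are defined, so that the inclusion $\ocal(\pi,\cld)\subset\ocal_q^{\pi}(\cld)$ legitimately feeds the monotonicity step.
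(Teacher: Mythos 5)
Your proof is correct and follows essentially the same route as the paper: reduce to Bychkov's real-point result (Proposition 2.6 in \cite{By}) on a transversal slice $\cld\cap L$ of complex codimension $\nu(p)$ via Lemma \ref{Clembycreal}, then lift back through Proposition \ref{prop-lower-shilov} and the monotonicity of Shilov boundaries. The only differences are cosmetic --- you unify the real and complex cases by allowing $\pi=\cbb^N$ when $\nu(p)=0$, and you explicitly verify that the slice actually meets $D$, a point the paper leaves implicit.
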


\begin{proof} By Corollary \ref{Ccorbyc}, $p$ is either real or complex.

If $p$ is real, then by Proposition 2.6 in \cite{By} and Remark \ref{rem-snhbcvx} we have that
$$
p \in \check{S}_{A_0(\cld)}=\check{S}_{\ocal_0(\cld)}=\check{S}_{\ocal_0^\pi(\cld)}.
$$
Recall that it follows from definition that $\ocal_0(\cld)=\ocal_0^\pi(\cld)$.

If $p$ is complex, then there are a domain $G$ and a $\nu(p)$-dimensional complex plane $\pi$ such that $p \in G \cap \pi \subset bD$. Let $L$ be a complex affine plane of codimension $\nu(p)$ such that $\pi\cap L = \{ p\}$. Then, by Lemma \ref{Clembycreal}, the point $p$ is a real boundary point of the convex body $\cld \cap L$. By Proposition 2.6 in \cite{By} and by Propositions \ref{prop-lower-shilov} and \ref{prop-qshilov} (2) we obtain that
$$
p \in \check{S}_{A_0(\cl{D}\cap L)}=\check{S}_{\ocal_0(\cl{D}\cap L)} \subset \check{S}_{\ocal(L,\cld)} \subset \check{S}_{\ocal_{\nu(p)}^\pi(\cld)}.
$$
\end{proof}

As a first consequence, we obtain a characterization of the Shilov boundary for the family of $(N-1)$-\psh\ \fcts. Compare also Remark \ref{rem-trivial-qshilov} (2).

\begin{cor}\label{CN-1byckov} The Shilov boundaries for the classes $\ocal^\pi_{N-1}(\cld)$, $\ocal_{N-1}(\cld)$ and $\PSH_{N-1}(\cld)$ coincide with the topological boundary of $D$; i.e.,
$$
\check{S}_{\ocal^\pi_{N-1}(\cld)}=\check{S}_{\ocal_{N-1}(\cld)}=\check{S}_{\PSH_{N-1}(\cld)}=bD.
$$
\end{cor}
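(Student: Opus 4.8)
The plan is to sandwich $bD$ between the smallest and the largest of the three boundaries and thereby collapse the whole chain to equalities. Specializing Proposition \ref{prop-qshilov} (2) to $q=N-1$ and $K=\cld$ (so that $bK=bD$) records for free the inclusions
$$
\check{S}_{\ocal^\pi_{N-1}(\cld)} \subset \check{S}_{\ocal_{N-1}(\cld)} \subset \check{S}_{\PSH_{N-1}(\cld)} \subset bD.
$$
Hence the entire statement reduces to proving the single reverse inclusion $bD \subset \check{S}_{\ocal^\pi_{N-1}(\cld)}$: once this is in hand, the displayed chain is squeezed into a chain of equalities, each term equal to $bD$.

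For that reverse inclusion I would fix an arbitrary boundary point $p \in bD$ and invoke Proposition \ref{Cqcomplexsilov}, which gives $p \in \check{S}_{\ocal^\pi_{\nu(p)}(\cld)}$. The remaining task is to promote the index $\nu(p)$ up to $N-1$. Since $p$ is a boundary point, the plane $E_p(\cld)$ is a proper affine subspace of $\cbb^N$, so the largest complex plane it contains cannot be all of $\cbb^N$; thus $\nu(p) \leq N-1$. Using the monotonicity of the families $\ocal^\pi_q$ in $q$ (recorded as $\ocal^\pi_0(U)\subset \cdots \subset \ocal^\pi_{N-1}(U)$, which passes to the compact-set classes because every extension into $\ocal^\pi_{\nu(p)}$ is also an extension into $\ocal^\pi_{N-1}$), we obtain the class inclusion $\ocal^\pi_{\nu(p)}(\cld) \subset \ocal^\pi_{N-1}(\cld)$, hence $\log|\ocal^\pi_{\nu(p)}(\cld)| \subset \log|\ocal^\pi_{N-1}(\cld)|$. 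Proposition \ref{prop-ashilov} (6) then yields $\check{S}_{\ocal^\pi_{\nu(p)}(\cld)} \subset \check{S}_{\ocal^\pi_{N-1}(\cld)}$, so $p \in \check{S}_{\ocal^\pi_{N-1}(\cld)}$. As $p$ was arbitrary, $bD \subset \check{S}_{\ocal^\pi_{N-1}(\cld)}$, completing the argument.

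All the analytic content sits in the already-proved Proposition \ref{Cqcomplexsilov} (and, through it, in Bychkov's Proposition 2.6 together with the dimension-reduction Lemma \ref{Clembycreal}); the present corollary is essentially a bookkeeping step. Consequently I do not anticipate a genuine obstacle, only two points deserving a line of care: first, the bound $\nu(p) \leq N-1$, which guarantees that the single top index $N-1$ dominates every $\nu(p)$ simultaneously and so works uniformly across all boundary points; and second, the routine check that the $q$-monotonicity of the function classes transfers from the open-set setting to the compact-set classes $\ocal^\pi_q(\cld)$ and then to their $\log|\cdot|$-versions, so that Proposition \ref{prop-ashilov} (6) applies verbatim.
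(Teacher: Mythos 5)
Your proposal is correct and follows essentially the same route as the paper: the paper's proof likewise combines Proposition \ref{Cqcomplexsilov} with the bound $0\leq\nu(p)\leq N-1$ and the monotonicity/inclusion chain from Proposition \ref{prop-qshilov} (2) (together with Proposition \ref{prop-qpsh} (10)) to squeeze everything down to $bD$. The extra care you take in justifying $\nu(p)\leq N-1$ and the transfer of $q$-monotonicity to the compact-set classes is just an expanded version of what the paper leaves implicit.
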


\begin{proof} If $p \in bD$, then $p$ is real or complex and $0\leq \nu(p)\leq N-1$. Thus, the previous proposition and Propositions \ref{prop-qshilov} (2) and \ref{prop-qpsh} (10) imply that
$$
p \in \check{S}_{\ocal^\pi_{\nu(p)}(\cld)}\subset \check{S}_{\ocal^\pi_{N-1}(\cld)}\subset \check{S}_{\ocal_{N-1}(\cld)} \subset \check{S}_{\PSH_{N-1}(\cld)} \subset bD.
$$
\end{proof}

We will need the following lemma.

\begin{lem}\label{lem-complex-peak} Let $p \in bD$ and $q \in \{ 0,\ldots,N-2\}$. If there exists an at least ${(q+1)}$-dimensional complex analytic set in $bD$ containing $p$, then $p$ is not a peak point for the class $P_{\PSH_q(\cld)}$. In particular, no $(q+1)$-complex point can be contained in $P_{\PSH_q(\cld)}$.
\end{lem}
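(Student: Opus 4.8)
The plan is to argue by contradiction using the maximum principle for \qpsh\ \fcts. Suppose $p$ were a peak point for $\PSH_q(\cld)$; then there is a \fct\ $u \in \PSH_q(\cld)$, \qpsh\ on some open \nbh\ $\Omega$ of $\cld$, with $M:=\max_{\cld}u=u(p)$ and $u(z)<M$ for all $z\in\cld\setminus\{p\}$. Let $V\subset bD$ be the given complex analytic set of dimension $d\geq q+1$ with $p\in V$. Since $V\subset bD\subset\cld\subset\Omega$, the restriction $u|_V$ attains the value $M$ only at $p$. I will produce from this a \qpsh\ \fct\ on a domain in $\cbb^d$ having a strict interior maximum, contradicting Proposition \ref{prop-qpsh} \eqref{qpsh-maxpr}, which applies because $q<d$.

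First I would isolate the mechanism at a regular point. If $p$ is a smooth point of $V$, then near $p$ the set $V$ is a $d$-dimensional complex submanifold of $\Omega$; intersecting it with a suitable affine plane through $p$ yields a $(q+1)$-dimensional complex submanifold $W\subset V\subset bD$, which I parametrize by a holomorphic embedding $h\colon B\to W$ from a ball $B\subset\cbb^{q+1}$ with $h(0)=p$. By invariance under holomorphic maps (Proposition \ref{prop-qpsh} \eqref{qpsh-hol}) the \fct\ $u\circ h$ lies in $\PSH_q(B)$, and since $u(h(w))<M=u(h(0))$ for $w\neq 0$ it has a strict maximum at the interior point $0$. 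This already contradicts Proposition \ref{prop-qpsh} \eqref{qpsh-maxpr}.

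To cover singular $p$ as well, I would replace the restriction to a submanifold by a fibrewise maximum. After a linear change of coordinates $(z,w)\in\cbb^d\times\cbb^{N-d}$ with $p=0$, choose a polydisc on which the projection $\pi\colon V\to\Delta\subset\cbb^d$ onto the first $d$ coordinates is a proper branched covering of some degree $k$ with thin branch locus $B\subset\Delta$. Over $\Delta\setminus B$ the covering splits into local holomorphic sheets $s_j$, so the fibrewise maximum $\phi(z):=\max\{u(\zeta):\zeta\in V,\ \pi(\zeta)=z\}$ is locally a finite maximum of compositions $u\circ s_j$ and hence \qpsh\ on $\Delta\setminus B$ by Proposition \ref{prop-qpsh} \eqref{qpsh-hol} and the stability of \qpshy\ under finite maxima (Proposition \ref{prop-qpsh} (3)). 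It is \usc\ and bounded above by $M$, so its regularization (cf. Proposition \ref{prop-qpsh} \eqref{qpsh-supremum}) should extend it to a \qpsh\ \fct\ on all of $\Delta$. Since the fibre over $0$ contains $p$ with $u(p)=M$, while every fibre over $z\neq 0$ avoids $p$ and therefore carries only values $<M$, the extended $\phi$ has a strict maximum at $0\in\Delta\subset\cbb^d$; as $q<d$ this contradicts Proposition \ref{prop-qpsh} \eqref{qpsh-maxpr}.

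The concluding assertion is then immediate: if $p$ is $(q+1)$-complex, then by definition there are a domain $G$ and a complex plane $\pi$ of dimension at least $q+1$ with $p\in G\cap\pi\subset bD$, and $G\cap\pi$ is a smooth (hence analytic) complex set of dimension $\geq q+1$ in $bD$ through $p$, so the first part applies. I expect the essential difficulty to lie entirely in the singular case, namely in justifying that the fibrewise maximum $\phi$ extends across the thin branch locus $B$ as a genuine \qpsh\ \fct\ on $\Delta$. This is a removable-singularity statement for \qpsh\ \fcts\ across analytic sets of positive codimension, in the spirit of S\l odkowski's results in \cite{Sl}, \cite{Sl2}; granting it, the argument upgrades the maximum principle on the smooth locus of $V$ to a maximum principle on the possibly singular analytic set $V$, which is exactly what the peak-point obstruction requires.
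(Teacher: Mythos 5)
Your proposal takes a genuinely different route: the paper's entire proof is a one-line citation of S\l odkowski's local maximum principle for \qpsh\ \fcts\ on analytic sets (Corollary 5.3 in \cite{Sl}), which states exactly that a \fct\ \qpsh\ near an analytic set of dimension greater than $q$ cannot have a strict interior maximum on that set; you are in effect reproving that result from the properties listed in Proposition \ref{prop-qpsh}. Your regular-point case is correct and complete: pulling the peak \fct\ back by a holomorphic parametrization of a $(q+1)$-dimensional complex submanifold of $bD$ and combining Proposition \ref{prop-qpsh} \eqref{qpsh-hol} with the maximum principle \eqref{qpsh-maxpr} does produce the contradiction, and since a $(q{+}1)$-complex point lies by definition on an open piece of a complex plane contained in $bD$, this already settles the ``in particular'' clause, which is all that Theorem \ref{Cqbyckov} uses. (Remark \ref{rem-qshilov-analyt}, on the other hand, invokes the lemma for possibly singular analytic sets, so the general case cannot simply be dropped.)

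In the singular case your construction is not yet a proof, for two reasons. The first you flag yourself: extending the fibrewise maximum $\phi$ across the branch locus $B$ is a removable-singularity theorem for \qpsh\ \fcts\ that is not among the properties in Proposition \ref{prop-qpsh} and would have to be imported from S\l odkowski's work anyway, at which point citing his Corollary 5.3 directly is both shorter and stronger. The second is unacknowledged and is where the argument actually breaks as written: the extension of $\phi$ at a point of $B$, say at $0$, is by definition $\limsup_{z\to 0,\ z\notin B}\phi(z)$, and for a merely \usc\ peak \fct\ $u$ there is no a priori reason why this limsup should equal $u(p)=M$ --- the identity $u(p)=\limsup_{\zeta\to p}u(\zeta)$ for \qpsh\ \fcts\ holds with the limsup taken in the ambient $\cbb^N$ (through $(q{+}1)$-dimensional plane slices), not along the fibers of $V$ over $\Delta\setminus B$. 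If that limsup drops below $M$, the extended \fct\ no longer attains a maximum at $0$ and Proposition \ref{prop-qpsh} \eqref{qpsh-maxpr} yields no contradiction. Ruling this out is essentially equivalent to the local maximum principle on analytic sets itself, so for singular $V$ you should cite the paper's reference rather than rebuild it.
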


\begin{proof} This follows immediately from the local maximum principle for \qpsh\ \fcts\ on analytic sets (see Corollary 5.3 in \cite{Sl}).
\end{proof}

We are now able to generalize Bychkov's theorem.

\begin{defn} For $q \in \{ 1,\ldots,N{-}1\}$ denote by $\Gamma_{q}(\cld)$ the set of all boundary points of $D$ which have a \nbh\ $U$ in $bD$ such that $U$ consists only of $q$-complex points.
\end{defn}

\begin{thm}\label{Cqbyckov} Let $q \in \{ 0,\ldots,N-2\}$. Then
$$
\check{S}_{\ocal^\pi_q(\cld)}=\check{S}_{\ocal_q(\cld)}=\check{S}_{\PSH_q(\cld)}=bD\setminus \Gamma_{q+1}(\cld).
$$
\end{thm}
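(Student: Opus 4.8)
The plan is to establish the two one-sided inclusions
$$
bD\setminus\Gamma_{q+1}(\cld)\subset\check{S}_{\ocal^\pi_q(\cld)}\qand\check{S}_{\PSH_q(\cld)}\subset bD\setminus\Gamma_{q+1}(\cld),
$$
and then to close the loop with the chain
$$
\check{S}_{\ocal^\pi_q(\cld)}\subset\check{S}_{\ocal_q(\cld)}\subset\check{S}_{\PSH_q(\cld)}\subset bD
$$
supplied by Proposition \ref{prop-qshilov} (2). Once both inclusions are in hand, this chain is squeezed from both ends and all three Shilov boundaries must equal $bD\setminus\Gamma_{q+1}(\cld)$.

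For the lower bound, I would fix $p\in bD\setminus\Gamma_{q+1}(\cld)$. By the very definition of $\Gamma_{q+1}(\cld)$, every neighborhood of $p$ in $bD$ contains a boundary point $p'$ that is not $(q{+}1)$-complex; by Corollary \ref{Ccorbyc} such a $p'$ is either real or complex with $\nu(p')\leq q$. Proposition \ref{Cqcomplexsilov} then gives $p'\in\check{S}_{\ocal^\pi_{\nu(p')}(\cld)}$, and since $\ocal^\pi_{\nu(p')}(\cld)\subset\ocal^\pi_q(\cld)$ whenever $\nu(p')\leq q$, the monotonicity of the Shilov boundary from Proposition \ref{prop-ashilov} (6) yields $p'\in\check{S}_{\ocal^\pi_q(\cld)}$. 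Thus every neighborhood of $p$ meets the set $\check{S}_{\ocal^\pi_q(\cld)}$, which is closed by Proposition \ref{prop-ashilov} (1), so $p$ itself lies in it.

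For the upper bound, I would invoke the peak-point description $\check{S}_{\PSH_q(\cld)}=\cl{P}_{\PSH_q(\cld)}$ from Proposition \ref{prop-qshilov} (3). First note that $\Gamma_{q+1}(\cld)$ is open in $bD$: if $p\in\Gamma_{q+1}(\cld)$ carries a neighborhood $U$ consisting of $(q{+}1)$-complex points, then that same $U$ witnesses $p''\in\Gamma_{q+1}(\cld)$ for each $p''\in U$, so $U\subset\Gamma_{q+1}(\cld)$. Every point of $\Gamma_{q+1}(\cld)$ is in particular $(q{+}1)$-complex, hence by Lemma \ref{lem-complex-peak} it is not a peak point for $\PSH_q(\cld)$; therefore $\Gamma_{q+1}(\cld)\cap P_{\PSH_q(\cld)}=\emptyset$. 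Being open and disjoint from $P_{\PSH_q(\cld)}$, the set $\Gamma_{q+1}(\cld)$ is also disjoint from the closure $\cl{P}_{\PSH_q(\cld)}=\check{S}_{\PSH_q(\cld)}$ (an open set disjoint from a set stays disjoint from its closure, since the complement of the open set is closed and contains the set), which is exactly the desired inclusion.

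The serious mathematical content is imported wholesale: the lower bound rides on the convexity-geometric Proposition \ref{Cqcomplexsilov} (itself resting on Bychkov's real-point theorem and the transversal-slicing Lemma \ref{Clembycreal}), while the upper bound rides on the local maximum principle packaged in Lemma \ref{lem-complex-peak}. I therefore expect no genuine obstacle in this final assembly; the only point demanding care is the index bookkeeping, namely verifying that ``not $(q{+}1)$-complex'' is precisely the condition $\nu(p')\leq q$ that lets Proposition \ref{Cqcomplexsilov} feed into the $q$-level family, and that the topological facts about $\Gamma_{q+1}(\cld)$ being open and about closures are used in the correct direction.
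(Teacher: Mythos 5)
Your proposal is correct and follows essentially the same route as the paper: the lower bound comes from Proposition \ref{Cqcomplexsilov} (stated there in contrapositive form, but logically identical to your closed-set argument), the upper bound from the peak-point identity of Proposition \ref{prop-qshilov} (3) together with Lemma \ref{lem-complex-peak}, and the final squeeze from the inclusion chain of Proposition \ref{prop-qshilov} (2). The index bookkeeping you flag (``not $(q{+}1)$-complex'' $\Leftrightarrow$ $\nu(p')\leq q$, via Corollary \ref{Ccorbyc}) checks out.
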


\begin{proof} If $p \in bD\setminus\check{S}_{\ocal^\pi_q(\cld)}$, then there is a \nbh \ $U$ of $p$ in $bD$ such that $U \cap \check{S}_{\ocal^\pi_q(\cld)} =\emptyset$. Thus, if $w \in U$, then $\nu(w)\geq q+1$ due to Proposition \ref{Cqcomplexsilov}. This means that $U$ consists only of $(q+1)$-complex points. Hence, $p\in\Gamma_{q+1}(\cld)$. We conclude that
$$
bD\setminus\Gamma_{q+1}(\cld)\subset \check{S}_{\ocal^\pi_q(\cld)}.
$$
On the other hand, if there is a \nbh \ $U$ of $p$ in $bD$ such that $U$ contains only $(q+1)$-complex points, then, by Lemma \ref{lem-complex-peak}, we obtain that $U \cap P_{\PSH_q(\cld)} = \emptyset$. This implies that $p \notin \cl{P}_{\PSH_q(\cld)}$. Since, by Proposition \ref{prop-qshilov} (3), the latter set coincides with $\check{S}_{\PSH_q(\cld)}$, we obtain the other inclusion
$$
\check{S}_{\PSH_q(\cld)} \subset bD\setminus\Gamma_{q+1}(\cld).
$$

In view of Proposition \ref{prop-qshilov} (2) this completes the proof.
\end{proof}

Now we give an interesting observation following from the previous Theorem.

\begin{rem}\label{rem-qshilov-analyt} Given an integer $q\in\{ 1,\ldots,N-1\}$ let $\Gamma^A_q(\cld)$ be the set of all boundary points $p$ of $D$ such that there exists a \nbh\ $U$ of $p$ in $bD$ so that for each point $z \in U$ there is a complex analytic set in $U$ of dimension at least $q$ containing $z$. Then
$$
\Gamma^A_q(\cld)=\Gamma_q(\cld).
$$
Indeed, the inclusion $\Gamma_q(\cld) \subset \Gamma^A_q(\cld)$ follows directly from the definition of these two sets and the definition of $q$-complex points.

Now let $p \in \Gamma^A_q(\cld)$. Then Lemma \ref{lem-complex-peak} and Proposition \ref{prop-qshilov} (3) imply that $p \notin \check{S}_{\PSH_{q-1}(\cld)}$. Thus, by Theorem \ref{Cqbyckov} we have that $p$ is contained in $\Gamma_q(\cld)$. This shows the other inclusion.
\end{rem}

In the end of this section, we check for an analytic structure of the Shilov boundary of \qpsh\ \fcts\ on convex sets.

\bthm\label{thm-fol-cvx} Let $q\in \{ 1,\ldots,N-1\}$ and assume that $\{z\in bD : \nu(z)\geq q+1\}$ is open. If it is non-empty, then the following open part
$$
\fcal_q(\cld):=\inti_{bD}\left(\check{S}_{\PSH_q(\cld)}\setminus\check{S}_{\PSH_{q-1}(\cld)}\right)
$$
of the Shilov boundary for $\PSH_q(\cld)$ in $bD$ locally admits a complex foliation by complex $q$-dimensional planes in the following sense: for every point $p \in \fcal_q(\cld)$ there exists a \nbh\ $U$ of $p$ in $bD$ such that for each $z \in U$ there is a domain $G_z$ in $\cbb^N$ and a unique complex $q$-dimensional plane $\pi_z$ with $z \in \pi_z \cap G_z \subset U$. In the special case $q=N-1$, these complex (hyper-)planes are aligned parallelly.
\ethm

\bproof We set $\Gamma_N:=\emptyset$. By Theorem \ref{Cqbyckov} and by Corollary \ref{CN-1byckov} we have that $\fcal_q(\cld)=\Gamma_{q}(\cld)\setminus\cl{\Gamma_{q+1}(\cld)}$. If the set $\{z\in bD : \nu(z)\geq q+1\}$ is open, then it coincides with $\Gamma_{q+1}(\cld)$. Thus,
$$
\fcal_q(\cld)=\Gamma_{q}(\cld)\setminus\cl{\{z\in bD : \nu(z)\geq q+1\}}.
$$
Now if $p$ is an arbitrary point from $\fcal_q(\cld)$, then there is a \nbh\ $W$ of $p$ in $\fcal_q(\cld)$ such that $\nu(z)=q$ for every point $z \in W$. Hence, the open set $\fcal_q(\cld)$ consists only of \textit{exactly $q$-complex points}. Then Corollaries \ref{lem-max-c-plane} and \ref{cor-pi-unique} imply existence and uniqueness of an open part of a complex $q$-dimensional plane $\pi_z=E^\cbb_z(\cld)$ containing $z$ and lying in $U$.

For the special case of $q=N-1$ the set $\fcal_{N-1}(\cld)$ is a convex hypersurface foliated by complex hyperplanes. By a result of Beloshapka and Bychkov in \cite{BeBy}, they have to be aligned parallelly. (See also Example \ref{ex-non-paral} and the remark before this example.)
\eproof

At the end of this section, we give an example for a convex domain $D$ in $\cbb^3$ such that the part $\fcal_1(\cld)$ does not admit a foliation in the sense of the previous theorem if the assumption on the openess of $\{z\in bD : \nu(z)\geq 2\}$ is dropped.

\bex Consider the domain $G$ in $\cbb\times \rbb$ given by $$G=\{ (x,y,u) \in \cbb\times\rbb : x^2 + (1-y^2)u^2 < (1-y^2), \ |y| < 1\}.$$
It is easy to compute that the \fct\ $h(y,u):=\sqrt{(1-y^2)(1-u^2)}$ is concave on $[-1,1]^2$. Since $G$ is the intersection of the sublevel set $\{x<h(y,u)\}$ of the concave \fct\ $h$ and the superlevel set $\{x>-h(y,u)\}$ of the convex \fct\ $-h$ over $[-1,1]^2$, it is convex in $\cbb\times\rbb$.

The boundary of $G$ contains the \textit{flat} parts $\{\pm i\}\times(-1,1)$ and $\{0\}\times[-1,1]\times\{\pm 1\}$ whereas the rest of the boundary consists of \textit{strictly convex} points. By puttting $D:=G\times(-1,1)^3$ we obtain a convex domain $D$ in $\cbb^3$ such that
$$
\{z\in bD : \nu(z)\geq 2\} = \{\pm i\}\times(-1,1)^4.
$$
and $\Gamma_1(\cld)$ is the whole boundary of $D$. In particular, $\Gamma_2(\cld)$ is empty. Thus, the boundary points $z$ in $bD$ with $\nu(z)\geq 2$ lie in $\Gamma_1(\cld)$, but there is no unique foliation by complex one-dimensional planes near these points.
\eex 

\section{Hausdorff dimension of the Shilov boundary}

In this section we prove some estimates on the Hausdorff dimension of the Shilov boundary for \qpsh\ \fcts\ on convex bodies.

\begin{defn} Let $(X,d)$ be a metric space. \startenum

\new For a subset $U$ of $X$ denote by $\diam(U)$ the \textit{diameter of $U$}, i.e.,
$$
\diam(U):=\sup\{ d(x,y): x,y \in U\}.
$$

\new Given a subset $E$ of $X$ and positive numbers $s$ and $\eps$ we set
$$
H^s_\eps(E):=\inf \left\{ \sum_{i=1}^\infty \diam(U_i)^s : E \subset \bigcup_{i=1}^\infty U_i, \ \diam(U_i)<\eps \ \forall \ i \in \nbb\right\}.
$$
The \textit{$s$-dimensional Hausdorff measure} is then defined by
$$
H^s(E):=\lim_{\eps \nach 0} H^s_\eps(E).
$$

\new For every subset $E$ of $X$ there is a number $s_0 \in [0,+\infty]$ such that
$$
H^s(E) = \infty \ \mathrm{for} \ s \in (0,s_0) \qand H^s(E)=0 \ \mathrm{for} \ s \in (s_0,\infty).
$$
The number $\dim_H E:=s_0$ is called the \textit{\textit{Hausdorff} (or metric) dimension of $E$}.
\end{defn}

The next statement can be found in, e.g., \cite{Fa}, Corollary 7.12.

\begin{prop}\label{prop-hausd} Let $I$ be a $m$-dimen\-sional cube in $\rbb^m$, $J$ be a $n$-dimen\-sional cube in $\rbb^n$ and $F$ be a subset of $I\times J$. For a given point $x \in I$ consider the slice $F_x:=F \cap (\{ x\}\times J)$. If $\dim_H F_x \geq \alpha$ for every $x \in I$, then $\dim_H F \geq \alpha+m$.
\end{prop}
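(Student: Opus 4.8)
The plan is to deduce the result from a Fubini-type inequality comparing the $(s{+}m)$-dimensional Hausdorff measure of $F$ with the integral over $I$ of the $s$-dimensional Hausdorff measures of its slices, and then to run a contradiction argument. Concretely, I would first establish that there is a constant $c=c(m)$ such that, for every $s>0$,
\begin{equation*}
\int_I^* H^s(F_x)\,dx \leq c\, H^{s+m}(F),
\end{equation*}
where $\int^*$ denotes the upper integral, which is used here to sidestep possible non-measurability of the integrand.

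Granting this inequality, the conclusion follows quickly. Suppose, for contradiction, that $\dim_H F < \alpha + m$, and treat first the nondegenerate case $\alpha>0$. Then the open interval $(\dim_H F - m,\, \alpha)$ is non-empty, so I can fix a number $s$ with $\dim_H F - m < s < \alpha$. On the one hand $s+m > \dim_H F$ forces $H^{s+m}(F)=0$, whence the inequality gives $\int_I^* H^s(F_x)\,dx = 0$, so that $H^s(F_x)=0$ for almost every $x \in I$. On the other hand $s < \alpha \leq \dim_H F_x$ for every $x \in I$, which by the definition of the Hausdorff dimension means $H^s(F_x)=\infty$ for each single point $x \in I$, contradicting that it vanishes for almost every $x$. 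In the degenerate case $\alpha \leq 0$ the claim reduces to $\dim_H F \geq m$, which follows from the fact that the coordinate projection $I \x J \to I$ is $1$-Lipschitz and, all slices being non-empty, surjective onto $I$, hence cannot lower the Hausdorff dimension below $\dim_H I = m$.

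The heart of the matter, and the step I expect to be the main obstacle, is the inequality itself. I would prove it by a covering argument. Fix $\eps>0$ and choose a countable cover $F \subset \bigcup_i U_i$ with $\diam(U_i)<\eps$ and $\sum_i \diam(U_i)^{s+m} \leq H^{s+m}_\eps(F)+\delta$. For each $x\in I$, those $U_i$ that meet the slice $\{x\}\x J$ cover $F_x$ by sets of diameter at most $\diam(U_i)$, so $H^s_\eps(F_x) \leq \sum_{i:\,U_i\cap(\{x\}\x J)\neq\emptyset}\diam(U_i)^s$. The key geometric observation is that the set of $x\in I$ for which $U_i$ meets $\{x\}\x J$ is contained in the projection of $U_i$ onto $\rbb^m$, a set of diameter at most $\diam(U_i)$ and therefore of Lebesgue measure at most $c\,\diam(U_i)^m$. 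Integrating the slice estimate over $I$ and interchanging sum and integral then yields
\begin{equation*}
\int_I^* H^s_\eps(F_x)\,dx \leq \sum_i \diam(U_i)^s \cdot c\,\diam(U_i)^m \leq c\bigl(H^{s+m}_\eps(F)+\delta\bigr).
\end{equation*}

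Letting $\delta\to 0$ gives $\int_I^* H^s_\eps(F_x)\,dx \leq c\,H^{s+m}_\eps(F)$, and then letting $\eps\to 0$ gives the required inequality. Here one uses that $H^s_\eps(F_x)\uparrow H^s(F_x)$ and $H^{s+m}_\eps(F)\uparrow H^{s+m}(F)$ as $\eps\to 0$, so that a monotone convergence argument applies. The delicate points are precisely the non-measurability of $x\mapsto H^s_\eps(F_x)$, which forces the use of upper integrals throughout, and the justification of the monotone passage to the limit in $\eps$ within that framework; once these are handled with care, the remainder is routine.
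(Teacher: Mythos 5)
The paper does not actually prove this proposition: it is quoted verbatim with a pointer to Falconer's book (Corollary 7.12), so there is no internal argument to compare against. Your proposal supplies a complete, self-contained proof, and it is correct; moreover it is essentially the classical argument behind the cited result, namely the Fubini-type slicing inequality $\int_I^* H^s(F_x)\,dx \leq c\,H^{s+m}(F)$ obtained by covering $F$, restricting each covering set to the slices it meets, and bounding the Lebesgue measure of the projection of a set of diameter $d$ by $c\,d^m$. The measure-theoretic care you take (upper integrals to dodge non-measurability of $x \mapsto H^s_\eps(F_x)$, countable subadditivity of the upper integral to interchange sum and integral, and a monotone-convergence argument for upper integrals as $\eps \to 0$) is exactly what is needed, and all three steps do go through for the upper integral defined as the infimum of integrals of measurable majorants. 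Two small points deserve a remark. First, when you fix $s$ with $\dim_H F - m < s < \alpha$ you should also insist on $s>0$, since the paper defines $H^s$ only for positive $s$; this is harmless because $\alpha>0$ in that case. Second, in the degenerate case $\alpha \le 0$ your surjectivity claim for the projection rests on all slices $F_x$ being non-empty, which under the convention $\dim_H\emptyset = 0$ is not literally forced by the hypothesis $\dim_H F_x \ge \alpha$; this edge case is irrelevant to the paper's application (where ultimately $\alpha = 2$), but it is worth flagging that the statement should be read with non-empty slices, or with $\alpha>0$, in mind.
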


It was shown in \cite{By} that the Hausdorff dimension of the Shilov boundary of a convex body in $\cbb^2$ is not less than 2. We partially generalize this result.

\begin{thm} Let $D$ be a convex bounded domain in $\cbb^N$ and $q\in\{ 0,\ldots,N-2\}$. Suppose that there are a constant $\alpha \geq 0$ and a complex $q$-codimensional plane $\pi_0$ intersecting $D$ such that
$$
\dim_H \check{S}_{\ocal_0(\cld \cap \pi)} \geq \alpha
$$
for every complex $q$-codimensional plane $\pi$ which lies nearby $\pi_0$ and which is parallel to $\pi_0$. Then
$$
\dim_H \check{S}_{\ocal_{q}(\cld)} \geq \alpha + 2q.
$$
In particular, $\dim_H \check{S}_{\ocal_{N-2}(\cld)} \geq 2N-2$.
\end{thm}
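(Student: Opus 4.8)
The plan is to slice the Shilov boundary $\check{S}_{\ocal_q(\cld)}$ by the family of complex planes parallel to $\pi_0$ and then apply the slicing estimate of Proposition \ref{prop-hausd}. First I would abbreviate $S_0:=\check{S}_{\ocal(\pi_0,\cld)}$. The class $\ocal(L,\cld)$ depends only on the common direction of the planes parallel to $L$, not on the particular plane chosen, so every plane $\pi$ parallel to $\pi_0$ gives the same class $\ocal(\pi,\cld)=\ocal(\pi_0,\cld)$ and hence the same Shilov boundary $S_0$. A bounded convex domain has a Stein \nbh\ basis (Remark \ref{rem-snhbcvx}), so Proposition \ref{prop-lower-shilov} applies and yields, for every such $\pi$,
$$
\check{S}_{\ocal_0(\cld\cap\pi)}=\check{S}_{\ocal(\pi,\cld)}\cap\pi=S_0\cap\pi.
$$
Thus the hypothesis $\dim_H\check{S}_{\ocal_0(\cld\cap\pi)}\geq\alpha$ becomes a uniform lower bound on the Hausdorff dimension of each slice $S_0\cap\pi$.

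Next I would set up adapted coordinates $\cbb^N=\cbb^q\x\cbb^{N-q}$ in which the planes parallel to $\pi_0$ are exactly the sets $\pi_x=\{x\}\x\cbb^{N-q}$, so that the parameter $x\in\cbb^q\cong\rbb^{2q}$ indexes these planes while $\cbb^{N-q}\cong\rbb^{2(N-q)}$ is the plane direction. I would then choose a cube $I\subset\rbb^{2q}$ centered at the parameter of $\pi_0$ and small enough that every plane $\pi_x$ with $x\in I$ is among the planes ``nearby $\pi_0$'' for which the hypothesis holds (possible because $D$ is open, so a whole $2q$-dimensional family of parallel planes still meets $D$), together with a cube $J\subset\rbb^{2(N-q)}$ large enough to contain $\cld\cap\pi_x$ for all $x\in I$ (possible since $D$ is bounded). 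Setting $F:=S_0\cap(I\x J)$, the slice $F_x$ coincides with $S_0\cap\pi_x$ because $S_0\subset bD\subset\cld$ is entirely captured by $J$; hence $\dim_H F_x\geq\alpha$ for every $x\in I$. Proposition \ref{prop-hausd} with $m=2q$ then gives $\dim_H S_0\geq\dim_H F\geq\alpha+2q$.

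Finally, the inclusion $\check{S}_{\ocal(\pi_0,\cld)}\subset\check{S}_{\ocal_q(\cld)}$ from Proposition \ref{prop-qshilov} (2) yields $\dim_H\check{S}_{\ocal_q(\cld)}\geq\dim_H S_0\geq\alpha+2q$, which is the asserted estimate. For the special case I would take $q=N-2$, so that each nearby parallel plane $\pi$ cuts $\cld$ in a convex body lying in a two-complex-dimensional plane $\cong\cbb^2$; Bychkov's result in \cite{By}, that the Shilov boundary of a convex body in $\cbb^2$ has Hausdorff dimension at least $2$, furnishes $\alpha=2$ uniformly over all such planes, whence $\dim_H\check{S}_{\ocal_{N-2}(\cld)}\geq 2+2(N-2)=2N-2$.

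I expect the main obstacle to be the bookkeeping that makes Proposition \ref{prop-hausd} genuinely applicable: one must verify that the abstract slice $F_x$ really equals the geometric slice $S_0\cap\pi_x$ (so that $J$ swallows the whole bounded slice), and that the hypothesis — assumed only for planes ``nearby'' $\pi_0$ — covers a full $2q$-real-dimensional cube $I$ of parameters rather than some lower-dimensional subfamily. The conceptual heart, namely that the slice of the global Shilov boundary coincides with the Shilov boundary of the sliced body, is already supplied by Proposition \ref{prop-lower-shilov}, so the remaining effort is essentially this coordinate and measure-theoretic setup.
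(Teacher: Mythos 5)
Your proposal is correct and follows essentially the same route as the paper: slice by the $2q$-real-parameter family of planes parallel to $\pi_0$, identify each slice of the Shilov boundary via Proposition \ref{prop-lower-shilov}, apply Proposition \ref{prop-hausd} with $m=2q$, and invoke Bychkov's Theorem 3.1 for the case $q=N-2$. Your extra observation that $\ocal(\pi,\cld)=\ocal(\pi_0,\cld)$ for all parallel $\pi$, so that one slices the single set $\check{S}_{\ocal(\pi_0,\cld)}$ rather than a union over $\pi$, is a harmless (and slightly cleaner) repackaging of the paper's argument, not a different one.
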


\begin{proof} Denote by $\Pi$ the set of the complex planes mentioned in the assumptions of this theorem. Then, by  Proposition \ref{prop-lower-shilov} and Proposition \ref{prop-qshilov}, we have that
$$
\bigcup_{\pi \in \Pi} \check{S}_{\ocal(\cld \cap \pi)} \subset \bigcup_{\pi \in \Pi}\check{S}_{\ocal(\pi, \cld)} \subset \check{S}_{\ocal^\pi_q(\cld)}\subset \check{S}_{\ocal_q(\cld)}.
$$
It follows then from Proposition \ref{prop-hausd} that $\dim_H\check{S}_{\ocal_{q}(\cld)} \geq \alpha + 2q$.

Let now $q=N-2$. It was shown in \cite{By}, Theorem 3.1, that $\dim_H \check{S}_{\ocal_0(\cld \cap \pi)} \geq \alpha=2$ for every complex two dimensional affine plane $\pi$ such that $\pi \cap D\neq \emptyset$. Hence, we conclude that
$$
\dim_H \check{S}_{\ocal_{N-2}(\cld)} \geq 2 + 2(N-2)=2N-2.
$$
\end{proof}

To show that the Hausdorff dimension of $\check{S}_{A_0(\cld)}$ is not less than two if $D \relc \cbb^2$ is a convex domain, Bychkov used that $\Gamma_1(\cld)$ admits a local foliation by complex lines which are aligned parallelly to each other. More general, if a convex hypersurface (i.e., an open part of the boundary of a convex body) is foliated by complex hyperplanes, then, by a result of Beloshapka and Bychkov in \cite{BeBy}, they are always parallel to each other. Especially, this holds for the open set $\Gamma_{N-1}(\cld)$, provided it is not empty. But, in general, it fails for lower dimensional complex foliations as the following example from \cite{NiTh} shows.

\begin{ex}\label{ex-non-paral} Consider the \fct\ $\vrho(z)=(\Re z_2)^2-(\Re z_1 )(\Re z_3 )$ for $z \in \cbb^3$. Then the set
$$
D:=\{z \in \cbb^3: \Re(z_1)>0, \vrho(z)<0 \}
$$
is convex and an open part of its boundary is foliated by a real 3-dimensional parameter family of open parts of non-parallel complex lines of the form
$$
\{ (a^2\zeta+ib, a\zeta +ic,\zeta), \zeta \in \cbb\}, \quad a,b,c \in \rbb.
$$

\end{ex} 

\section{Shilov boundary for smooth \qpsh\ \fcts}

In this section, we give a characterization of the Shilov boundary for $\ccal^2$-smooth \qpsh\ \fcts\ defined near the closure of a compact set.

\bdefn \label{def-more-qpsh}Let $K$ be a compact set in $\cbb^N$. \benum

\item We denote by $\PSH_q^2(K)$ the set of all \fcts\ which are $\ccal^2$-smooth and \qpsh\ in some \nbh\ of $K$.

\item The set $\PSH_q^c(K)$ is the set of all \fcts\ which are \cont\ on some \nbh\ of $K$ and locally the maximum of finitely many $\ccal^2$-smooth \qpsh\ \fcts.

\item The set $\PSH_q^0(K)$ is formed by all \fcts\ which are \cont\ and \qpsh\ in some \nbh\ of $K$.
\eenum
\edefn

\brem \label{rem-qshilov-2} Since $\acal_0:=\PSH^2_0(K)$ generates the topology of $K$ and fulfills $\acal_0+\acal \subset \acal$, where $\acal \in \{ \PSH_q^0(K), \PSH_q^c(K), \PSH_q^2(K)\}$, we can apply Theorem \ref{thm-shilov-bound} in order to obtain that $\cs_{\acal}$ is a non-empty $\acal$-boundary. If we put $\omega(z):=1+|z|^2$, $z \in \cbb^N$, then by Theorem \ref{bishop-sa} we get the peak property $\cl{P}_{\acal}=\cs_{\acal}$ for the subfamilies of \qpsh\ \fcts\ defined above.
\erem

In the following, we present a useful regularization technique derived from \cite{Dem}, Lemma (5.18) in {chapter 5}.

\begin{defn}\label{uscreg}

Let $\theta$ be a non-negative $\ccal^\infty$-smooth \fct\ on $\rbb$ with compact support in the unit interval $(-1,1)$ such that $\int_\rbb \theta(s)ds=1$ and $\theta(-t)=\theta(t)$ for all $t\in\rbb$. Given positive numbers $\eps_1,\ldots,\eps_l \in (0,+\infty)$ and $t_1,\ldots,t_l \in \rbb$, we define the \textit{regularized maximum} by
$$
\rmax{(\eps_1,\ldots,\eps_l)}( t_1,\ldots,t_l):=\int_{\rbb^l} \max\{ t_1+\eps_1 s_1,\ldots,t_l+\eps_l s_l\} \theta(s_1)\cdot\ldots\cdot\theta(s_l)d(s_1,\ldots,s_l).
$$
For a single positive number $\eps>0$ we set $\rmax{\eps}:=\rmax{(\eps,\ldots,\eps)}$.
\end{defn}

The regularized maximum has the following properties.

\begin{lem} \label{lem-reg-max}\

\benum

\item The \fct\ $( t_1,\ldots,t_l) \mapsto \rmax{(\eps_1,\ldots,\eps_l)}( t_1,\ldots,t_l)$ is a $\ccal^\infty$-smooth convex \fct\ on $\rbb^l$ which is non-decreasing in every variable $t_1,\ldots,t_l$.

\item It holds that $\max\{t_1,\ldots,t_l\} \leq \rmax{(\eps_1,\ldots,\eps_l)}( t_1,\ldots,t_l) \leq \max\{t_1+\eps_1,\ldots,t_l+\eps_l\}$.

\item If $t_j+\eps_j < \max_{i \neq j}\{ t_i-\eps_i\}$, then we have that
$$
\rmax{(\eps_1,\ldots,\eps_l)}(t_1,\ldots,t_l) = \rmax{(\eps_1,\ldots,\eps_{j-1},\eps_{j+1},\ldots,\eps_l)}(t_1,\ldots,t_{j-1},t_{j+1},\ldots,t_l).
$$
\eenum
\end{lem}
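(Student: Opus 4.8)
The plan is to establish the three assertions separately, using throughout the observation that $d\mu(s):=\theta(s_1)\cdots\theta(s_l)\,d(s_1,\ldots,s_l)$ is a probability measure on $\rbb^l$ (a product of the probability measures $\theta(s_j)\,ds_j$) whose barycenter is the origin, since each $\theta$ is even and hence $\int_\rbb s_j\,\theta(s_j)\,ds_j=0$. In this language $\rmax{(\eps_1,\ldots,\eps_l)}(t_1,\ldots,t_l)$ is exactly the $\mu$-average of the map $s\mapsto\max\{t_1+\eps_1 s_1,\ldots,t_l+\eps_l s_l\}$, and most of the statement follows from soft properties of averaging.

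For the algebraic half of part (1), I note that for each fixed $s$ the function $(t_1,\ldots,t_l)\mapsto\max\{t_1+\eps_1 s_1,\ldots,t_l+\eps_l s_l\}$ is a maximum of affine functions, hence convex, and is visibly non-decreasing in each $t_j$; averaging against the non-negative measure $\mu$ preserves both properties, so $\rmax{(\eps_1,\ldots,\eps_l)}$ is convex and non-decreasing in every variable. For part (2) the upper estimate follows because on $\supp\mu$ one has $s_j<1$, so $t_j+\eps_j s_j<t_j+\eps_j\le\max_i\{t_i+\eps_i\}$ pointwise; integrating against the probability measure $\mu$ yields the claimed upper bound. For the lower estimate, the barycenter condition gives $\int_{\rbb^l}(t_j+\eps_j s_j)\,d\mu(s)=t_j$ for each $j$, and Jensen's inequality applied to the convex function $\max$ then produces
$$
\max\{t_1,\ldots,t_l\}\le\int_{\rbb^l}\max\{t_1+\eps_1 s_1,\ldots,t_l+\eps_l s_l\}\,d\mu(s)=\rmax{(\eps_1,\ldots,\eps_l)}(t_1,\ldots,t_l).
$$

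The smoothness claim in part (1) is the main obstacle, since the integrand is only Lipschitz, not differentiable, in the variables $t_j$. The device is the substitution $u_j=t_j+\eps_j s_j$, under which the integral becomes
$$
\rmax{(\eps_1,\ldots,\eps_l)}(t_1,\ldots,t_l)=\int_{\rbb^l}\max\{u_1,\ldots,u_l\}\prod_{j=1}^l\frac{1}{\eps_j}\,\theta\!\left(\frac{u_j-t_j}{\eps_j}\right)d(u_1,\ldots,u_l).
$$
Now the entire $t$-dependence sits inside the $\ccal^\infty$ factors $\theta((u_j-t_j)/\eps_j)$, while $\max\{u_1,\ldots,u_l\}$ no longer depends on $t$. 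Because $\theta$ and all its derivatives are bounded and compactly supported, the integrand and each of its $t$-derivatives are, for $t$ ranging over any fixed compact set, dominated by a fixed integrable function supported in a bounded region of $u$-space; hence differentiation under the integral sign is justified to all orders and $\rmax{(\eps_1,\ldots,\eps_l)}$ is $\ccal^\infty$.

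Finally, for part (3) I would argue that the $j$-th entry never attains the maximum where the integrand lives. Choose $i_0\neq j$ realizing $t_{i_0}-\eps_{i_0}=\max_{i\neq j}\{t_i-\eps_i\}$, so that the hypothesis reads $t_j+\eps_j<t_{i_0}-\eps_{i_0}$. On $\supp\mu$ one has $s_j<1$ and $s_{i_0}>-1$, whence $t_j+\eps_j s_j<t_j+\eps_j<t_{i_0}-\eps_{i_0}<t_{i_0}+\eps_{i_0}s_{i_0}$, so the $j$-th term is strictly dominated at every point where the integrand is supported. Thus $\max\{t_1+\eps_1 s_1,\ldots,t_l+\eps_l s_l\}=\max\{t_i+\eps_i s_i:i\neq j\}$ for $\mu$-almost every $s$, the integrand becomes independent of $s_j$, and the $s_j$-integration contributes the factor $\int_\rbb\theta(s_j)\,ds_j=1$. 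What remains is precisely the defining integral for $\rmax{(\eps_1,\ldots,\eps_{j-1},\eps_{j+1},\ldots,\eps_l)}(t_1,\ldots,t_{j-1},t_{j+1},\ldots,t_l)$, which is the asserted identity.
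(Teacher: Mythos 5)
Your proof is correct in all three parts. Note that the paper itself offers no proof of this lemma; it is quoted from Demailly's book (Lemma (5.18) of Chapter 5 in \cite{Dem}), and your argument is essentially the standard one given there: the soft averaging observations for convexity, monotonicity and the two bounds (Jensen plus the barycenter condition for the lower bound, the support condition $|s_j|<1$ for the upper bound and for part (3)), and the substitution $u_j=t_j+\eps_j s_j$ to transfer all $t$-dependence onto the smooth kernel, which is exactly the right device for the $\ccal^\infty$ claim. No gaps.
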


We can apply the regularized maximum to \qpsh\ \fcts.

\blem\label{lem-reg-max-qpsh} Let $\psi_1,\ldots,\psi_k$ be finitely many $\ccal^2$-smooth \fcts\ on an open set $U$ in $\cbb^N$ such that for each $j \in \{ 1,\ldots,k\}$ the \fct\ $\psi_j$ is $q_j$-\psh\ on $U$. Then for every tuple of positive numbers $(\eps_1,\ldots,\eps_k)$ the regularized maximum $\rmax{(\eps_1,\ldots,\eps_k)}\{ \psi_1,\ldots,\psi_k\}$  is $\ccal^2$-smooth and \qpsh\ on $U$, where $q=q_1+\ldots+q_k$.
\elem

\bproof This is a consequence of Lemma \ref{lem-reg-max} and Proposition 2.11 in \cite{TPESZ}.

\eproof

The regularized maximum allows to compare the Shilov boundaries of the families of smooth and non-smooth \qpsh\ \fcts\ introduced in Definition \ref{def-more-qpsh}.

\bprop\label{prop-more-qpsh-shilov} Given a compact set $K$ in $\cbb^N$ we have that
$$
P_{\PSH_q^2(K)} = P_{\PSH_q^c(K)}
$$
and
$$
\cs_{\PSH_q^2(K)} = \cs_{\PSH_q^c(K)}=\cs_{\PSH_q^0(K)}=\cs_{\PSH_q(K)}.
$$
\eprop

\bproof Since $\PSH_q^2(K) \subset \PSH_q^c(K)\subset \PSH_q^0(K)\subset \PSH_q(K)$, we derive for the set of peak points of these classes that
\bea
P_{\PSH_q^2(K)} \subset P_{\PSH_q^c(K)}\subset P_{\PSH_q^0(K)}\subset P_{\PSH_q(K)} \label{eqqq}
\eea
By the peak point property $\cl{P}_{\acal}=\SA$ for these families (see Remark \ref{rem-qshilov-2}) it follows that
$$
\cs_{\PSH_q^2(K)} \subset \cs_{\PSH_q^c(K)}\subset \cs_{\PSH_q^0(K)}\subset \cs_{\PSH_q(K)}.
$$

Assume now that there is a \fct\ $\psi \in \PSH_q^c(K)$ such that $\psi$ peaks at some point $p \in bK$. Then there are a \nbh\ $U$ of $p$ and finitely many $\ccal^2$-smooth \fcts\ $\psi_1,\ldots,\psi_k$ on $U$ such that $\psi=\max_{j=1,\ldots,k} \psi_j$ on $U$. By picking a slightly smaller \nbh\ of $p$, we can arrange that the \fcts\ $\psi_j$, $j=1,\ldots,k$, are defined in a \nbh\ of $\cl{U}$. Let $j_0$ be an index from $\{ 1,\ldots,k\}$ such that $\psi(p)=\psi_{j_0}(p)$. Since $\psi$ peaks at $p$, we have that
$$
\psi_{j_0}(p) = \psi(p) > \psi(z) \geq \psi_{j_0}(z)
$$
for every $z \in (U \cap K)\setminus\{ p\}$. Hence, $\psi_{j_0}$ peaks at $p$ in $K \cap U$. Since $\psi_{j_0}$ is \cont\ on $\cl{U}$, we can choose a suitable constant $c \in \rbb$ such that
$$
\psi_{j_0}(p) > c > \max_{bU \cap K} \psi_{j_0}.
$$
By Lemma \ref{lem-reg-max-qpsh} the \fct\ $\vphi:=\rmax{\eps}\{ \psi_{j_0},c\}$ is $\ccal^2$-smooth and \qpsh\ in a \nbh\ of $\cl{U} \cap K$. If we choose $\eps>0$ small enough, then due to Lemma \ref{lem-reg-max} (3) we can derive that the \fct\ $\vphi$ peaks at $p$ in $K$ and fulfills $\vphi=c$ on $bU \cap K$. In view of the previous property, we can extend $\vphi$ by the constant $c$ into a \nbh\ of $K$ in order to obtain a \fct\ from $\PSH_q^2(K)$ which peaks at $p$. Since $p$ was an arbitrary peak point for the class $\PSH_q^c(K)$, together with the inclusions in (\ref{eqqq}) above, we conclude that
$$
P_{\PSH_q^c(K)}= P_{\PSH_q^2(K)}.
$$
By the peak point property for the \qpsh\ \fcts\ from Definition \ref{def-more-qpsh} we obtain that
$$
\cs_{\PSH_q^2(K)} = \cs_{\PSH_q^c(K)}.
$$
Now Bungart's approximation theorem (see Corollary 5.4 in \cite{Bu}) and S\l odkowski's approximation theorem (see Theorem 2.9 in \cite{Sl2}) imply that
$$
\PSH_q^0(K) \subset \scl{\PSH_q^c(K)} \qand \PSH_q(K) \subset \scl{\PSH_q^0(K)}.
$$
Therefore, Corollary \ref{cor-shilov-cl} yields
$$
\cs_{\PSH_q(K)} \subset \cs_{\scl{\PSH_q^0(K)}}=\cs_{\PSH_q^0(K)} \subset \cs_{\scl{\PSH_q^c(K)}} = \cs_{\PSH_q^c(K)} \subset \cs_{\PSH_q(K)}.
$$
Hence, we obtain the remaining identities $\cs_{\PSH_q(K)}=\cs_{\PSH^0_q(K)}=\cs_{\PSH^c_q(K)}$.
\eproof

\brem\label{rem-local-global-peak} From the proof of the previous result we can derive the following local peak point property: Let $p$ be a boundary point of a compact set $K$ in $\cbb$. If $p$ is a local peak point for $\ccal^2$-smooth \qpsh\ \fcts, i.e., there is a \nbh\ $U$ of $p$ and a $\ccal^2$-smooth \qpsh\ \fct\ $\psi$ on $U$ such that $\psi(p)>\psi(z)$ for every $z \in (U\cap K)\setminus\{ p\}$, then $p$ is a (global) peak point for $\PSH_q^2(K)$.
\erem

We recall the definition of a strictly \qpsc\ boundary point of a smoothly bounded domain.

\begin{defn} Let $D$ be an open set in $\cbb^N$ with $\ccal^2$-smooth boundary, $p$ be a boundary point of $D$ and $q\in \{ 0,\ldots,N-1\}$. If there are a \nbh\ $U$ of $p$ and a $\ccal^2$-smooth strictly \qpsh\ \fct\ $\vrho$ on $U$ such that $d\vrho(p)\neq 0$ and $U \cap D = \{ z \in U : \vrho(z)<0\}$, then $D$ is said to be \textit{strictly \qpsc\ at $p$}. The set of all points $p \in bD$ such that $D$ is strictly \qpsc\ at $p$ is denoted by $\scal_q(\cld)$.
\end{defn}

Now we give a characterization of the Shilov boundary for \qpsh\ \fcts\ on bounded domains with $\ccal^2$-smooth boundary.

\begin{thm}\label{qshilov-qpsc} Let $q\in \{ 0,\ldots,N-1\}$ and let $D$ be a bounded domain in $\cbb^N$ with $\ccal^2$-smooth boundary. Then
$$
\cs_{\PSH_q(\cld)}=\cs_{\PSH^2_q(\cld)}= \cl{\scal_q(\cld)}.
$$
\end{thm}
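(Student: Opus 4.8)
The first equality $\cs_{\PSH_q(\cld)}=\cs_{\PSH^2_q(\cld)}$ is already contained in Proposition \ref{prop-more-qpsh-shilov} (applied with $K=\cld$), so the whole task reduces to proving $\cs_{\PSH^2_q(\cld)}=\cl{\scal_q(\cld)}$. I would establish this by two inclusions, using throughout that by Remark \ref{rem-qshilov-2} the set $\cs_{\PSH^2_q(\cld)}=\cl{P}_{\PSH^2_q(\cld)}$ is a \emph{closed} boundary for $\PSH^2_q(\cld)$, and in fact the smallest one.

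For the inclusion $\cl{\scal_q(\cld)}\subset\cs_{\PSH^2_q(\cld)}$ it suffices, the right-hand side being closed, to show $\scal_q(\cld)\subset P_{\PSH^2_q(\cld)}$, i.e. that every \sqpsc\ boundary point is a peak point. Fix $p\in\scal_q(\cld)$ with local $\ccal^2$-smooth \sqpsh\ defining \fct\ $\vrho$, normalize $p=0$, and pass to unitary coordinates diagonalizing the complex Hessian $(\vrho_{z_j\bar z_k}(0))$ with eigenvalues $\lambda_1,\dots,\lambda_N$, so that $\lambda_{q+1},\dots,\lambda_N>0$. Writing $\vrho=2\Re P+\sum_j\lambda_j|z_j|^2+o(|z|^2)$, where $P$ is the degree-two (holomorphic) Levi polynomial, I would set
$$
\psi(z):=2\Re P(z)-C\sum_{j=1}^{q}|z_j|^2
$$
for a large constant $C>0$. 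Its complex Hessian is $\mathrm{diag}(-C,\dots,-C,0,\dots,0)$ with exactly $N-q$ vanishing eigenvalues, so $\psi\in\PSH_q$ by Proposition \ref{prop-qpsh} (4); and substituting the expansion of $\vrho$ together with $\vrho\le 0$ on $\cld$ gives $\psi(z)\le -c|z|^2+o(|z|^2)<0=\psi(0)$ for $z\in\cld$ near $0$, $z\ne 0$. Thus $\psi$ is a local $q$-\psh\ peak \fct\ at $p$, and Remark \ref{rem-local-global-peak} upgrades $p$ to a genuine peak point for $\PSH^2_q(\cld)$.

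For the reverse inclusion I would show that $\cl{\scal_q(\cld)}$ is itself a boundary for $\PSH^2_q(\cld)$; minimality of $\cs_{\PSH^2_q(\cld)}$ among closed boundaries then forces $\cs_{\PSH^2_q(\cld)}\subset\cl{\scal_q(\cld)}$. Given $\psi\in\PSH^2_q(\cld)$, the perturbations $\psi_\eps:=\psi+\eps|z|^2$ are $\ccal^2$-smooth and \sqpsh\ near $\cld$, so by the maximum principle (Proposition \ref{prop-qpsh} (10)) each attains its maximum over $\cld$ at some $p_\eps\in bD$. The central claim is that every such $p_\eps$ lies in $\scal_q(\cld)$. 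To see this, let $r$ be a local $\ccal^2$ defining \fct\ of $D$ at $p_\eps$; maximality of $\psi_\eps$ on $\{r\le 0\}$ yields, via the Lagrange conditions, a multiplier $\mu\ge 0$ with $d\psi_\eps=\mu\,dr$ at $p_\eps$ and $\hess_\rbb(\psi_\eps-\mu r)\le 0$ on $T_{p_\eps}(bD)$. Restricting to the complex tangent space $T^{\cbb}_{p_\eps}(bD)$ gives the complex-Hessian inequality $\hess_\cbb\psi_\eps\le\mu\,\mathcal L$ there, $\mathcal L$ being the Levi form. Since strict \qpshy\ forces a positive complex-Hessian direction (which, if $\mu=0$, would violate the tangential second-order condition on $\{r=0\}$), one gets $\mu>0$. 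As $\hess_\cbb\psi_\eps$ has at least $N-q$ positive eigenvalues, its restriction to the codimension-one space $T^{\cbb}_{p_\eps}(bD)$ keeps at least $N-q-1$ of them by eigenvalue interlacing, whence $\mathcal L$ has at least $N-1-q$ positive eigenvalues; replacing $r$ by $r+Cr^2$ then produces a \sqpsh\ defining \fct, i.e. $p_\eps\in\scal_q(\cld)$. Finally, along $\eps\to 0$ the \fcts\ $\psi_\eps$ decrease to $\psi$, so $\max_{\cld}\psi_\eps\to\max_{\cld}\psi$ by Lemma \ref{lem-decr-sh}, and any accumulation point $p_0$ of $(p_\eps)$ lies in $S(\psi)\cap\cl{\scal_q(\cld)}$, proving the boundary property.

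The peak-\fct\ estimate and the passage $r\mapsto r+Cr^2$ are routine. The main obstacle is the central claim of the last paragraph: extracting the Levi-form positivity of $bD$ at the maximum point $p_\eps$ from the strict \qpshy\ of $\psi_\eps$. This requires combining the first- and second-order maximum conditions on $\{r=0\}$ with the eigenvalue count for the complex Hessian and its codimension-one restriction, and in particular verifying $\mu>0$. It is precisely this perturbation-and-maximum argument that renders the degenerate boundary points (where $\mathcal L$ is merely semidefinite) harmless, since one never has to analyze them directly.
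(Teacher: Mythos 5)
Your argument is correct, but its second half follows a genuinely different route from the paper's. For the inclusion $\cl{\scal_q(\cld)}\subset\cs_{\PSH^2_q(\cld)}$ both proofs reduce to $\scal_q(\cld)\subset P_{\PSH^2_q(\cld)}$ and then invoke Remark \ref{rem-local-global-peak}; the paper's local peak \fct\ is simply $\vrho(z)-\eps|z-p|^2$, which remains \qpsh\ precisely because $\vrho$ is \emph{strictly} \qpsh, so your Levi-polynomial construction, while sound, is more elaborate than necessary. The genuine divergence is in the inclusion $\cs_{\PSH^2_q(\cld)}\subset\cl{\scal_q(\cld)}$: the paper obtains it by citing Theorem 5.6 of Hunt--Murray \cite{HM} (every peak point for smooth \qpsh\ \fcts\ lies in $\cl{\scal_q(\cld)}$) together with the peak-point identity $\cs_{\PSH^2_q(\cld)}=\cl{P}_{\PSH^2_q(\cld)}$ from Remark \ref{rem-qshilov-2}, whereas you prove directly that $\cl{\scal_q(\cld)}$ is a closed boundary for $\PSH^2_q(\cld)$ via the strictly \qpsh\ perturbations $\psi+\eps|z|^2$, the maximum principle, and a Lagrange-multiplier/eigenvalue-interlacing analysis at the boundary maximum point. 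Your version is self-contained --- it replaces the external citation by an explicit argument and bypasses the Bishop-type peak machinery for this direction --- at the cost of more differential-geometric bookkeeping; the key steps (the second-order condition $\hesr(\psi_\eps-\mu r)\le 0$ on $T_{p_\eps}(bD)$, its polarization to the Levi form on the complex tangent space, the deduction $\mu>0$, and the repair $r\mapsto r+Cr^2$) all check out. You should, however, note the edge case $q=N-1$: there the interlacing count is vacuous and $\mu>0$ need not follow, but every boundary point of a bounded $\ccal^2$-smoothly bounded domain already lies in $\scal_{N-1}(\cld)$ (the normal-direction eigenvalue contributed by $r+Cr^2$ suffices), so the claim $p_\eps\in\scal_q(\cld)$ survives.
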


\bproof It follows from Theorem 5.6 in \cite{HM} that
$$
P_{\PSH^2_q(D)\cap \ccal^0(\cld)} \subset \cl{\scal_q(\cld)} \qand \scal_q(\cld) \subset P_{\PSH_q(D)\cap \ccal^0(\cld)}.
$$
It follows from the definition that $\PSH_q^2(\cld) \subset \PSH^2_q(D)\cap \ccal^0(\cld)$ and, therefore,
\bea \label{qsqpsceq1}
P_{\PSH^2_q(\cld)}\subset \cl{\scal_q(\cld)}.
\eea
Hence, by the previous Proposition \ref{prop-more-qpsh-shilov} and the peak point property for $\ccal^2$-smooth \qpsh\ \fcts\ (see Remark \ref{rem-qshilov-2}) we obtain that
\bea \label{qsqpsceq2}
\cs_{\PSH^2_q(\cld)}=\cs_{\PSH_q(\cld)}=\cl{P}_{\PSH^2_q(\cld)} \subset \cl{\scal_q(\cld)}.
\eea
On the other hand, let $p \in \scal_q(\cld)$. Then there is a \nbh\ $U$ of $p$ and a $\ccal^2$-smooth strictly \qpsh\ \fct\ $\rho$ on $U$ such that $\rho$ vanishes on $bD \cap U$ and $\rho(z)<0$ if $z \in U \cap D$. Since $\rho$ is strictly \qpsh, there is a positive constant $\eps>0$ such that $\vphi(z):=\rho(z)-\eps|z-p|^2$ is $\ccal^2$-smooth and \qpsh\ on $U$. Moreover, $\vphi(p)=0$ and $\vphi(z)<0$ for every $z \in (U\cap \cld)\setminus\{ p\}$. In view of Remark \ref{rem-local-global-peak}, the point $p$ is also a peak point for the family $\PSH_q^2(\cld)$. Since $p$ is an arbitrary point from $\scal_q(\cld)$, it follows that $\scal_q(\cld)$ lies in $P_{\PSH^2_q(\cld)}$. In view of inclusion (\ref{qsqpsceq1}) above and the peak point property for $\ccal^2$-smooth \qpsh\ \fcts, we obtain that
$$
\cs_{\PSH_q^2(\cld)}=\cl{P}_{\PSH_q^2(\cld)}=\cl{\scal_q(\cld)}.
$$
The statement follows now from the inclusions (\ref{qsqpsceq2}) above.
\eproof

For the special case $q=N-1$ we obtain the following improvement of Remark \ref{rem-trivial-qshilov} and Corollary \ref{CN-1byckov} in the case of smoothly bounded domains.

\bthm \label{thm-N-1-qshilov} If $D$ is a bounded domain in $\cbb^N$ with $\ccal^2$-smooth boundary, then we have that
$$
P_{\PSH^2_{N-1}(\cld)} =\cs_{\PSH_{N-1}^2(\cld)} =bD.
$$
\ethm

\bproof Since $D$ is bounded and has a $\ccal^2$-smooth boundary, it is easy to construct a global defining \fct\ $\vrho$ for $D$, i.e., a $\ccal^2$-smooth \fcts\ in a \nbh\ $U$ of $\cld$ such that $D=\{ z \in U : \vrho(z)<0 \}$ and $d\vrho \neq 0$ on $bD$. Then for a large enough constant $c>0$, the \fct\ $\psi:=e^{c \vrho}-1$ is strictly $(N-1)$-\psh\ and $\ccal^2$-smooth in a \nbh\ $V \relc U$ of $bD$. By shrinking $V$, we can assume that $\psi$ is defined in a \nbh\ of $\cl{V}$ in $U$. For an appropriate choice of positive constants $\delta>0$ and $b>0$ we have that $\delta |z|^2-b < \psi(z)$ for every $z \in bD$ and that $\psi(z) < \delta |z|^2 -b \quad \mathrm{for \ every} \ z \in bV\cap D$. For a positive number $\eta>0$ we put $\tilde\psi(z):=\rmax{\eta}\{\psi(z),\delta|z|^2-b\}$. Then, by Lemma \ref{lem-reg-max} (3), we can choose $\eta>0$ so small that $\tilde\psi(z)=\psi(z)$ for every $z$ in some \nbh\ of $bD$ in $V$ and such that $\tilde\psi(z)=\delta|z|^2-b$ for every $z$ in some \nbh\ of $bV$ in $\cl{V}\cap D$. But then we can extend $\tilde\psi(z)$ by $\delta|z|^2-b$ into $D\setminus V$. We denote this extension again by $\tilde\psi$. Observe that $\tilde\psi$ is now strictly $(N-1)$-\psh\ in some \nbh\ of $\cl{D}$. Therefore, for every boundary point $p$ of $D$ there is a positive constant $\eps=\eps(p)$ such that $\tilde\psi(z)-\eps|z-p|^2$ is $(N-1)$-\psh\ and $\ccal^2$-smooth in some \nbh\ of $\cl{D}$. Moreover, it peaks at $p$. Hence, we derive that
$$
bD \subset P_{\PSH_{N-1}^2(\cld)} \subset \cs_{\PSH_{N-1}^2(\cld)} \subset bD.
$$
\eproof

We also mention here the following result obtained by Basener in \cite{Ba2} (see Theorem 5).

\begin{thm} Let $q\in \{ 0,\ldots,N-1\}$. Then $\check{S}_{A_q(\cld)}$ is contained in $\cl{\scal_q(\cld)}$.
\end{thm}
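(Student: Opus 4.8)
The plan is to reduce the statement to a property of peak points and then to import the geometric input of Hunt and Murray through a regularization argument, exactly in the spirit of the proof of Proposition \ref{prop-more-qpsh-shilov}. By Proposition \ref{prop-qshilov} (4), applied to $\bcal=A_q(\cld)$, we already know that $\check{S}_{A_q(\cld)}=\cl{P}_{A_q(\cld)}$. Since $\cl{\scal_q(\cld)}$ is closed, it therefore suffices to establish the inclusion $P_{A_q(\cld)}\subset\cl{\scal_q(\cld)}$ and then to pass to closures.

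So let $p$ be a peak point for $A_q(\cld)$ with peaking \fct\ $f\in A_q(\cld)$, i.e. $|f(p)|=\max_{\cld}|f|=:M$ and $|f(z)|<M$ for every $z\in\cld\setminus\{p\}$. Since $p$ is a peak point, $f(p)\neq 0$, so $M>0$ and there is an open \nbh\ $U$ of $p$ in $\cbb^N$ with $f\neq 0$ on $U\cap\cld$. On $U\cap D$ the \fct\ $f$ is $\ccal^2$-smooth and \qhol, so by Proposition \ref{prop-qhol} (9) the \fct\ $u:=\log|f|$ is $\ccal^2$-smooth and \qpsh\ on $U\cap D$, is \cont\ up to the boundary, and attains its maximum over $\cld$ only at $p$. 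In other words $p$ is a local peak point for $\ccal^2$-smooth \qpsh\ \fcts, but only for \fcts\ living on the interior $D$ and not on a full \nbh\ of $p$ in $\cbb^N$. This is the essential difficulty, and it is precisely why the monotonicity of Shilov boundaries (Proposition \ref{prop-ashilov} (6)) cannot be applied directly to compare $A_q(\cld)$ with $\PSH_q(\cld)$: a \fct\ of $A_q(\cld)$ is \qhol\ only in the interior, so $\log|f|$ need not extend as a \qpsh\ \fct\ to any \nbh\ of $\cld$.

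To globalize I would follow the construction in the proof of Proposition \ref{prop-more-qpsh-shilov}. Pick an open \nbh\ $U'\relc U$ of $p$ and a constant $c$ with $\max_{bU'\cap\cld}u<c<u(p)=\log M$, which is possible because $p\in U'$ is the unique point of $\cld$ at which $u$ attains its maximum. For a small $\eps>0$ set $g:=\rmax{\eps}\{u,c\}$ on $U\cap D$; by Lemmas \ref{lem-reg-max} and \ref{lem-reg-max-qpsh} it is $\ccal^2$-smooth and \qpsh\ there. Lemma \ref{lem-reg-max} (3) shows that, for $\eps$ small enough, $g$ coincides with the constant $c$ on a \nbh\ of $bU'\cap\cld$ and coincides with $u$ on a \nbh\ of $p$, so $g$ peaks at $p$. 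Hence extending $g$ by the constant $c$ on $D\setminus U'$ yields a \fct, still denoted $g$, that lies in $\PSH^2_q(D)\cap\ccal^0(\cld)$ and peaks at $p$. Therefore $p\in P_{\PSH^2_q(D)\cap\ccal^0(\cld)}$.

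Finally I would invoke the geometric input. The inclusion $P_{\PSH^2_q(D)\cap\ccal^0(\cld)}\subset\cl{\scal_q(\cld)}$ follows from Theorem 5.6 in \cite{HM} and was already used in the proof of Theorem \ref{qshilov-qpsc}. This gives $p\in\cl{\scal_q(\cld)}$, hence $P_{A_q(\cld)}\subset\cl{\scal_q(\cld)}$, and passing to closures yields $\check{S}_{A_q(\cld)}=\cl{P}_{A_q(\cld)}\subset\cl{\scal_q(\cld)}$, as desired. The main obstacle is the one highlighted in the second paragraph: restricting attention to a peak point $p$ (where $f(p)\neq 0$ forces genuine smoothness of $\log|f|$ near $p$) and then rebuilding a global \fct\ of $\PSH^2_q(D)\cap\ccal^0(\cld)$ by the regularized maximum is what circumvents the failure of $\log|f|$ to be \qpsh\ on a neighbourhood of $\cld$.
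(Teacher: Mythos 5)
Your proof is correct, but note that the paper does not actually prove this statement: it is quoted as Theorem 5 of Basener's article \cite{Ba2}, so there is no internal argument to compare against. What you have produced is a self-contained derivation from the paper's own toolkit, and it holds together: Proposition \ref{prop-qshilov} (4) reduces the claim to showing $P_{A_q(\cld)}\subset\cl{\scal_q(\cld)}$; at a peak point $p$ the peaking \fct\ $f$ satisfies $f(p)\neq 0$, so $u=\log|f|$ is a genuine $\ccal^2$-smooth \qpsh\ \fct\ on $U\cap D$ near $p$ and \cont\ up to $U\cap\cld$; the regularized maximum with a constant (Lemmas \ref{lem-reg-max} and \ref{lem-reg-max-qpsh}) then globalizes this local datum to an element of $\PSH^2_q(D)\cap\ccal^0(\cld)$ peaking at $p$, exactly in the style of the proofs of Proposition \ref{prop-more-qpsh-shilov} and Theorem \ref{qshilov-qpsc}; and the inclusion $P_{\PSH^2_q(D)\cap\ccal^0(\cld)}\subset\cl{\scal_q(\cld)}$, which the paper itself imports from Theorem 5.6 of \cite{HM}, finishes the argument. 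You also correctly isolate the one real obstruction, namely that $\log|f|$ is \qpsh\ only on the interior, so the monotonicity of Proposition \ref{prop-ashilov} (6) cannot be played against $\PSH_q(\cld)$ directly; targeting the class $\PSH^2_q(D)\cap\ccal^0(\cld)$ is the right fix. Two details worth making explicit in a write-up: the smallness of $\eps$ must be chosen so that $c+3\eps<\log M$, since on the region where the regularized maximum does not reduce to $u$ one only has the bound $g\leq\max\{u,c\}+\eps\leq c+3\eps$, and this is what guarantees $S(g)=\{p\}$; and although your final citation already yields $p\in\cl{\scal_q(\cld)}\subset bD$, it is the local maximum modulus principle (Proposition \ref{prop-qhol} (8)) that forces a peak point of $A_q(\cld)$ to lie on $bD$ in the first place.
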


\brem The lack of appropriate gluing techniques for \qhol\ \fcts\ does not permit to obtain a converse results, i.e., it remains an open question whether the inclusion $\check{S}_{A_q(\cld)} \supset \scal_q(\cld)$ is also true.
\erem

As in the convex case (see Theorem \ref{thm-fol-cvx}) we can find a complex foliation in some parts of the Shilov boundary for \qpsh\ \fcts\ on smoothly bounded domains. For further results on complex foliations of real submanifolds we refer to \cite{Free}.

\begin{thm}\label{thm-fol-psc} Let $q$ be an integer from $\{ 1,\ldots,N-1\}$ and let $D$ be a bounded \psc\ domain in $\cbb^N$ with $\ccal^2$-smooth boundary. Then the open part
$$
\fcal_q(\cld):=\inti_{bD}\left(\check{S}_{\PSH_q(\cld)}\setminus\check{S}_{\PSH_{q-1}(\cld)}\right)
$$
of the Shilov boundary for $\PSH_q(\cld)$ in $bD$ locally admits a foliation by complex $q$-dimensional submanifolds, provided it is not empty.
\end{thm}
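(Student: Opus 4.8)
The plan is to convert the set-theoretic description of $\fcal_q(\cld)$ into a constant-rank condition on the \levf\ of $bD$ and then to quote Freeman's foliation theorem from \cite{Free}. \emph{First}, I would invoke Theorem \ref{qshilov-qpsc}, once for $q$ and once for $q-1$ (the latter being legitimate since $q\geq 1$), to rewrite both Shilov boundaries as closures of the \sqpsc\ loci, so that
$$
\fcal_q(\cld)=\inti_{bD}\bigl(\cl{\scal_q(\cld)}\setminus\cl{\scal_{q-1}(\cld)}\bigr).
$$
This replaces the potential-theoretic object by the differential-geometric sets $\scal_q(\cld)$ and $\scal_{q-1}(\cld)$, whose defining property is accessible through Proposition \ref{prop-qpsh} \eqref{qpsh-smooth}.

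\emph{Second}, I would fix a global $\ccal^2$ defining \fct\ $\vrho$ of $D$ and, for $p\in bD$, let $r(p)$ be the rank of the \levf\ $L_p$, that is, of the complex Hessian $(\vrho_{z_k\bar z_l})(p)$ restricted to the complex tangent space $T_p^{\cbb}(bD)$ of complex dimension $N-1$. Since $D$ is \psc, each $L_p$ is positive semi-definite; hence $r(p)$ equals the number of positive eigenvalues of $L_p$, the kernel $\ker L_p\subset T_p^{\cbb}(bD)$ is a complex subspace of dimension $N-1-r(p)$, and $r$ is lower semi-continuous. Using Proposition \ref{prop-qpsh} \eqref{qpsh-smooth} together with the usual device of raising the one tangentially missing eigenvalue by adding a large multiple of the squared normal direction to $\vrho$, one checks that $p\in\scal_q(\cld)$ iff $r(p)\geq N-1-q$; in particular $\scal_q(\cld)=\{r\geq N-1-q\}$ is open. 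This is essentially the content of Theorem 5.6 in \cite{HM}.

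\emph{Third}, I would establish that $L$ has \emph{constant rank} $N-1-q$ on $\fcal_q(\cld)$, equivalently that $\dim_{\cbb}\ker L_p=q$ there. On one side, every $p\in\fcal_q(\cld)$ lies in the open set $bD\setminus\cl{\scal_{q-1}(\cld)}$, so a whole $bD$-\nbh\ of $p$ misses $\scal_{q-1}(\cld)=\{r\geq N-q\}$ and therefore satisfies $r\leq N-1-q$. On the other side $\fcal_q(\cld)\subset\cl{\scal_q(\cld)}$, so $\scal_q(\cld)$ is dense in $\fcal_q(\cld)$ and carries $r=N-1-q$. I expect this third step to be the main obstacle: the bound $r\leq N-1-q$ holds on a full \nbh\ of $\fcal_q(\cld)$ and $r=N-1-q$ holds on the dense subset $\scal_q(\cld)\cap\fcal_q(\cld)$, but lower semi-continuity of the rank does not by itself prevent $r$ from dropping below $N-1-q$ along a nowhere dense closed subset; ruling this out, so that the \levf\ has genuinely constant rank, is the heart of the matter. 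Here the \textbf{interior} in the definition of $\fcal_q(\cld)$ is meant to play exactly the role that the openness hypothesis on $\{z\in bD:\nu(z)\geq q+1\}$ plays in the convex Theorem \ref{thm-fol-cvx}, namely to excise precisely this rank-jump locus.

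\emph{Finally}, on the open set where $L$ has constant rank $N-1-q$, the kernels $\ker L_p$ assemble into a complex subbundle of $T^{\cbb}(bD)$ of fibre dimension $q$; Freeman's theorem in \cite{Free} then shows that this Levi-null distribution is integrable and that its integral leaves are complex $q$-dimensional submanifolds of $\cbb^N$ lying in $bD$, which is the asserted local foliation. In the extreme case $q=N-1$ the leaves are open pieces of complex hyperplanes, matching Theorem \ref{thm-fol-cvx}; unlike the convex situation, however, they need no longer be mutually parallel, as Example \ref{ex-non-paral} illustrates.
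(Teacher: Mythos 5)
Your proposal follows the paper's proof step for step: apply Theorem \ref{qshilov-qpsc} to $q$ and to $q-1$, translate membership in $\scal_q(\cld)$ into a rank condition on the \levf\ via pseudoconvexity, conclude that the Levi form has exactly $N-q-1$ positive and $q$ zero eigenvalues on the holomorphic tangent space at each point of $\fcal_q(\cld)$, and invoke Theorem 1.1 of \cite{Free}. The one step you explicitly decline to carry out --- showing that the rank equals $N-1-q$ at \emph{every} point of $\fcal_q(\cld)$ and not merely on a dense open subset --- is precisely the step the paper disposes of by assertion: its entire argument is the displayed identity
$$
\fcal_q(\cld)=\inti_{bD}\left(\cl{\scal_q(\cld)}\setminus\cl{\scal_{q-1}(\cld)}\right)=\scal_q(\cld)\setminus\cl{\scal_{q-1}(\cld)},
$$
after which exact rank is immediate. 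The second equality amounts to the inclusion $\inti_{bD}\left(\cl{\scal_q(\cld)}\right)\setminus\cl{\scal_{q-1}(\cld)}\subset\scal_q(\cld)$ whose justification you are asking for (the reverse inclusion is clear since $\scal_q(\cld)$ is open); the paper offers no argument for it. So, measured against the paper, you have not missed an idea --- you have correctly isolated the point that the paper passes over.

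Your worry is substantive, not cosmetic. Lower semi-continuity of the rank $r$ makes $\{r\leq N-2-q\}$ closed and, by density of $\scal_q(\cld)$ in $\cl{\scal_q(\cld)}$, nowhere dense in $\fcal_q(\cld)$, but nothing quoted excludes it from being nonempty there. The local model $\vrho=\Re z_3+|z_1|^4$ in $\cbb^3$ with $q=1$ is pseudoconvex with $\ccal^2$ boundary, has $r=1$ off the nowhere dense set $\{z_1=0\}\cap bD$ and $r=0$ on it, and has $\scal_0=\emptyset$ near the origin; hence every nearby boundary point lies in $\inti_{bD}\left(\cl{\scal_1}\right)\setminus\cl{\scal_0}$ while the Levi kernel jumps to complex dimension $2$ along $\{z_1=0\}$, so the kernels do not form a rank-one subbundle and the constant-rank hypothesis needed for \cite{Free} fails there. (In this particular model the leaves, the $z_2$-lines, do extend across the degenerate locus, but that requires a separate argument.) To close the gap one must either prove that $\scal_q(\cld)$ is a regular open subset of $bD\setminus\cl{\scal_{q-1}(\cld)}$, or weaken the conclusion to a foliation of the dense open subset $\scal_q(\cld)\setminus\cl{\scal_{q-1}(\cld)}$ of $\fcal_q(\cld)$, which is all that the argument --- yours or the paper's --- actually delivers.
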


\begin{proof} By Theorem \ref{qshilov-qpsc} we have that
\bea\label{folcvxeq1}
\fcal_q(\cld) = \inti_{bD}\left(\cl{\scal_q(\cld)}\setminus\cl{\scal_{q-1}(\cld)}\right) = \scal_q(\cld)\setminus\cl{\scal_{q-1}(\cld)}.
\eea
Given a defining \fct\ $\vrho$ of $D$, by the definition of the set $\scal_q(\cld)$, by pseudoconvexity of $D$ and by the identities (\ref{folcvxeq1}) above, for each point $p \in \fcal_q(\cld)$ the complex Hessian $L$ of $\vrho$ at $p$ has exactly $N-q-1$ positive and $q$ zero eigenvalues on the holomorphic tangent space $H_p bD$ to $bD$ at $p$. Then, by Theorem 1.1 in \cite{Free}, the set $\fcal_q(\cld)$ locally admits a foliation by complex $q$-dimensional submanifolds.
\end{proof}

\addcontentsline{toc}{section}{References}
\bibliographystyle{alpha}
%\bibliography{bib/bibliography}

\begin{thebibliography}{Dem12}

\bibitem[AW98]{Wermer}
H.~Alexander and J.~Wermer.
\newblock {\em Banach algebras and several complex variables, Graduate Texts in
  Mathematics}.
\newblock Springer-Verlag, New York, 3rd edition, 1998.

\bibitem[Bas76]{Ba}
R.~F. Basener.
\newblock {Nonlinear Cauchy-Riemann Equations and $q$-Pseudoconvexity}.
\newblock {\em Duke Math. J.}, 43:203--213, 1976.

\bibitem[Bas78]{Ba2}
R.~F. Basener.
\newblock {Several Dimensional Properties of the Spectrum of a Uniform
  Algebra}.
\newblock {\em Pacific J. Math.}, 74, 2:297--306, 1978.

\bibitem[BB86]{BeBy}
V.~K. Beloshapka and S.~N. By\v{c}kov.
\newblock {Convex hypersurfaces in $\mathbf{C}^n$}.
\newblock {\em Mathematical Notes}, 40:854--857, 1986.

\bibitem[Ber31]{Berg}
S.~Bergman.
\newblock {\"Uber ausgezeichnete Randflächen in der Theorie der Functionen von
  Zwei komplexen Ver\"anderlichen}.
\newblock {\em Math. Ann.}, 104:611–636, 1931.

\bibitem[Bis59]{Bi}
E.~Bishop.
\newblock {A minimal boundary for function algebras}.
\newblock {\em Pacific J. Math.}, 9:629--642, 1959.

\bibitem[Bre59]{Br1}
H.~J. Bremermann.
\newblock {On a generalized Dirichlet problem for plurisubharmonic functions
  and pseudo-convex domains. Characterization of \v Silov boundaries}.
\newblock {\em Trans. Amer. Math. Soc.}, 91:246--276, 1959.

\bibitem[Bun90]{Bu}
L.~Bungart.
\newblock {Piecewise smooth approximations to $q$-plurisubharmonic functions}.
\newblock {\em Pacific J. Math.}, 142:227--244, 1990.

\bibitem[Byc81]{By}
S.~N. By\v{c}kov.
\newblock {On geometric properties of the boundary of a domain of holomorphy}.
\newblock {\em Math. USSR Izv.}, 16:41--54, 1981.

\bibitem[Dal71]{Dales}
H.~G. Dales.
\newblock {Boundaries and peak points of Banach function algebras}.
\newblock {\em Proc. London Math. Soc.}, 21:121--136, 1971.

\bibitem[Dem12]{Dem}
J.-P. Demailly.
\newblock {http://www-fourier.ujf-grenoble.fr/\textasciitilde
  demailly/manuscripts/agbook.pdf}.
\newblock June, 2012.

\bibitem[Die06]{Dieu}
N.~Q. Dieu.
\newblock {$q$-plurisubharmonicity and $q$-pseudoconvexity in $\cbb^n$}.
\newblock {\em Publ. Mat.}, 50:349--369, 2006.

\bibitem[Fal03]{Fa}
K.~Falconer.
\newblock {\em {Fractal Geometry, Mathematical Foundations and Applications}}.
\newblock John Wiley \& Sons, West Sussex, UK, 2nd edition, 2003.

\bibitem[Fre74]{Free}
M.~Freeman.
\newblock {Local Complex Foliation of Real Submanifolds}.
\newblock {\em Math. Ann.}, 209:1--30, 1974.

\bibitem[HM78]{HM}
R.~L. Hunt and J.~J. Murray.
\newblock {$q$-plurisubharmonic functions and a generalized Dirichlet problem}.
\newblock {\em Mich. Math. Journ.}, 25:299--316, 1978.

\bibitem[Hon88]{Honary}
T.~G. Honary.
\newblock {The Density of Peak Points in the Shilov Boundary of a Banach
  Function Algebra}.
\newblock {\em Proc. Amer. Math. Soc.}, 103:480--482, 1988.

\bibitem[NT12]{NiTh}
N.~Nikolov and P.~J. Thomas.
\newblock {http://arxiv.org/1112.1867}.
\newblock 2012.

\bibitem[PZ13]{TPESZ}
T.~Pawlaschyk and E.~S. Zeron.
\newblock {On convex hulls and pseudoconvex domains generated by
  $q$-plurisubharmonic functions, part I}.
\newblock {\em J. Math. Anal. App.}, 408:394--408, 2013.

\bibitem[Sic62]{Sic}
J.~Siciak.
\newblock On function families with boundary.
\newblock {\em Pacific J. Math.}, 12, No. 1:375--384, 1962.

\bibitem[Slo84]{Sl2}
Z.~S\l odkowski.
\newblock {The Bremermann-Dirichlet problem for $q$-plurisubharmonic
  functions}.
\newblock {\em Ann. Scuola Norm. Sup. Pisa, Cl. Sci.}, (4) 11:303--326, 1984.

\bibitem[Slo86]{Sl}
Z.~S\l odkowski.
\newblock {Local Maximum Property and $q$-Plurisubharmonic Functions in Uniform
  Algebras}.
\newblock {\em J. Math. Anal. Appl.}, 115:105--130, 1986.

\bibitem[Wit83]{Wittmann}
R.~Wittmann.
\newblock {Shilov Points and Shilov Boundaries}.
\newblock {\em Math. Ann.}, 263:237--250, 1983.

\end{thebibliography}

\vspace{1cm}

T.~Pawlaschyk, \textsc{Department of Mathematics and Informatics, University of Wuppertal, Gaussstr. 20, 42119 Wuppertal, Germany}\par\nopagebreak
  \textit{E-mail address:} \texttt{pawlaschyk@math.uni-wuppertal.de}

\end{document}